\newcommand\mbb{\mathbb}
\newcommand\mcal{\mathcal}
\newcommand\ol{\overline}
\newcommand\wt{\widetilde}
\newcommand\sQ{\mcal{Q}}
\newcommand\sS{\mcal{S}}
\newcommand\sT{\mcal{T}}
\newcommand\sV{\mcal{V}}
\newcommand\C{\mbb{C}}
\renewcommand\P{\mbb{P}}
\newcommand\Q{\mbb{Q}}
\newcommand\R{\mbb{R}}
\newcommand\Z{\mbb{Z}}
\newcommand\lamda{\lambda}
\DeclareMathOperator*\conics{conics}
\DeclareMathOperator*\GL{GL}
\DeclareMathOperator*\rank{rank}
\newcommand\isom{\cong}
\renewcommand\epsilon{\varepsilon}
\renewcommand\phi{\varphi}
\renewcommand\theta{\vartheta}
\numberwithin{equation}{section}
\theoremstyle{plain}
\newtheorem{Thm}[equation]{Theorem}
\newtheorem{Prop}[equation]{Proposition}
\newtheorem{Cor}[equation]{Corollary}
\newtheorem{Lemma}[equation]{Lemma}
\newtheorem*{Thm*}{Theorem}
\newtheorem*{Prop*}{Proposition}
\newtheorem*{Cor*}{Corollary}
\newtheorem*{Lemma*}{Lemma}
\newtheorem*{Sublemma*}{Sublemma}
\newtheorem*{Conjecture*}{Conjecture}
\theoremstyle{definition}
\newtheorem{Example}[equation]{Example}
\newtheorem{Algorithm}[equation]{Algorithm}
\newtheorem{Remark}[equation]{Remark}
\newtheorem*{Constr*}{Construction}
\newtheorem*{Def*}{Definition}
\newtheorem*{Defs*}{Definitions}
\newtheorem*{Example*}{Example}
\newtheorem*{Examples*}{Examples}
\newtheorem*{Exercise*}{Exercise}
\newtheorem*{LemmaDef*}{Lemma and Definition}
\newtheorem*{Notation*}{Notation}
\newtheorem*{Problem*}{Problem}
\newtheorem*{Question*}{Question}
\newtheorem*{Remark*}{Remark}
\newtheorem*{Remarks*}{Remarks}
\newtheorem*{Warning*}{Warning}
\newtheorem*{Text*}{}
\begin{document}
\title{Quartic Curves and Their Bitangents}
\author{Daniel Plaumann}
\author{Bernd Sturmfels}
\author{Cynthia Vinzant}
\address{Dept.~of Mathematics, University of
  California, Berkeley, CA 94720, USA}
  \subjclass[2010]{Primary: 14H45;  Secondary: 13P15, 14H50, 14Q05}
%\date{\today}

\begin{abstract}
A smooth quartic curve in the complex projective plane has
$36$ inequivalent representations as a symmetric determinant of linear forms
and $63$ representations as a sum of three squares.
These correspond  to Cayley octads and Steiner complexes respectively.
We present exact algorithms for computing these objects
from the $28$ bitangents. This expresses
Vinnikov quartics as spectrahedra and
positive quartics as Gram matrices.
We explore the geometry of Gram spectrahedra and
we find equations for the variety of  Cayley octads.
Interwoven is an exposition of much of
the $19$th century theory of plane quartics.
\end{abstract}

\maketitle

\section{Introduction}
We consider smooth curves in the projective plane defined by ternary quartics
\begin{equation}
\label{intro:quartic}
 f(x,y,z) \,\,\, = \,\,\,  c_{400} x^4 + c_{310} x^3 y + c_{301} x^3 z + 
c_{220} x^2 y^2 + c_{211} x^2 y z + \cdots + c_{004} z^4 , 
\end{equation}
whose $15$ coefficients $c_{ijk}$ are parameters over the field $\Q$ of rational numbers.
Our goal is to devise exact algorithms for computing the two alternate representations
\begin{equation}
\label{intro:LMR}  f(x,y,z)\quad = \quad {\rm det}\bigl( x A + y B + z C \bigr), \qquad \qquad \qquad
\end{equation}
where $A,B,C$ are symmetric $4 \times 4$-matrices, and
\begin{equation}
\label{intro:SOS} \qquad
 f(x,y,z) \quad = \quad q_1(x,y,z)^2 \,+\,
 q_2(x,y,z)^2 \,+\, q_3(x,y,z)^2 ,
  \end{equation}
where the $q_i(x,y,z)$ are quadratic forms.
The representation (\ref{intro:LMR}) is of most interest
when the real curve $\mathcal{V}_\R(f)$ consists of two nested ovals.
Following Helton-Vinnikov \cite{MR2292953} and Henrion \cite{Hen},
one seeks real symmetric matrices $A,B,C$ whose span contains
a positive definite matrix. 
The representation (\ref{intro:SOS}) is of most interest
when the real curve $\mathcal{V}_\R(f)$ is empty.
Following Hilbert \cite{MR1510517} and
Powers-Reznick-Scheiderer-Sottile \cite{PRSS}, one seeks quadrics
 $q_i(x,y,z)$ with real coefficients.
We shall explain how to compute  
{\em all} representations (\ref{intro:LMR}) and (\ref{intro:SOS}) over~$\C$.

The theory of plane quartic curves is a delightful chapter of
$19$th century mathematics, with contributions by
Aronhold, Cayley, Frobenius, Hesse, Klein, Schottky, Steiner, Sturm
and many others. Textbook references include
\cite{Dol, MR0123600, MR0115124}.  It started in 1834 with
 Pl\"ucker's result \cite{012.0444cj} that
the complex curve $\mathcal{V}_\C(f)$ 
has $28$ bitangents. The linear form $ \,\ell =  \alpha x + \beta y + \gamma z \,$
of a bitangent satisfies the identity
$$ f(x,y,z) \quad = \quad \, g(x,y,z)^2  \,+\,\ell(x,y,z) \cdot h(x,y,z)  $$
for some quadric $g$ and some cubic $h$.
This translates into a system of polynomial equations in
$(\alpha: \beta: \gamma)$, and our algorithms 
start out by solving these equations.

Let $K$ denote the corresponding splitting field, that is,
the smallest field extension of $\Q$ that
contains the coefficients $\alpha, \beta, \gamma$ for all
$28$ bitangents.
The Galois group $ {\rm Gal}(K,\Q)$ is very far from being the
symmetric group $S_{28}$. In fact, if the coefficients
$c_{ijk}$ are general enough, it is the Weyl group of $E_7$ modulo its center,
\begin{equation}
\label{intro:galois}  \,\,
 {\rm Gal}(K,\Q) \, \isom \, W(E_7)/\{\pm 1\}  \, \isom \,
 {\rm Sp}_6(\Z/2\Z) . \end{equation}
This group has order $\, 8 ! \cdot 36 \, = 1451520 $,
and it is not solvable \cite[page 18]{Har}.
We will see a combinatorial representation of this Galois group 
 in Section~\ref{sec:octad} (Remark~\ref{rem:HesseTable}). It is based on
\cite[\S 19]{MR0123600} and
 \cite[Thm.~9]{DO}. The connection with
 ${\rm Sp}_6(\Z/2\Z)$ arises from the theory of
theta functions  \cite[\S 5]{Dol}. For further information see
\cite[\S II.4]{Har}.

Naturally, the field extensions needed for
(\ref{intro:LMR}) and (\ref{intro:SOS}) are much smaller
for special quartics. As our running
example we take the smooth quartic given by
$$ E(x,y,z) \quad = \quad 
25 \cdot (x^4+y^4+z^4)\,-\, 34 \cdot (x^2 y^2+x^2 z^2+y^2 z^2). $$
We call this the {\em Edge quartic}. It is one of the curves in the family
studied by William L.~Edge in \cite[\S 14]{Edge}, and it admits
a matrix representation  (\ref{intro:LMR}) over $\Q$:
\begin{equation}
\label{LMRforedge}
 E(x,y,z) \quad = \quad {\rm det}
               \begin{pmatrix}   0  &     x + 2 y  &   2 x + z &   y - 2 z  \\
                   x + 2 y   &    0  &      y + 2 z  &  -2 x + z \\
                    2 x + z  &  y + 2 z   &     0   &    x - 2 y  \\
                   \, y - 2 z  &  -2 x + z  &  x - 2 y  &     0    \end{pmatrix}. 
 \end{equation}
 The sum of three squares representation (\ref{intro:SOS}) is derived from the expression
\begin{equation}
\label{eq:Gram} 
 \begin{pmatrix} x^2 \! &\! y^2 \! & \! z^2 \! &\! xy \! 
& \! xz \! &\! yz \end{pmatrix}
\!\! 
\begin{pmatrix}
25 & -55/2 & -55/2 & 0 & 0 & 21 \\
-55/2 & 25 & 25 & 0 & 0 & 0 \\
-55/2 & 25 & 25 & 0 & 0 & 0 \\
0 & 0 & 0 & 21 & -21 & 0 \\
0 & 0 & 0 & -21 & 21 & 0 \\
21 & 0 & 0 & 0 & 0 & -84
\end{pmatrix} \!\!                
\begin{pmatrix}
 x^2 \\ y^2 \\ z^2 \\ xy \\ xz \\ yz \end{pmatrix} 
\end{equation}
 by factoring the above rank-$3$ matrix
  as $H^T \cdot H $ where $H$ is a complex $3 {\times} 6$-matrix.
The real quartic curve $\mathcal{V}_\R(E)$ consists of four ovals and is shown in Figure~\ref{fig:edge}.

\begin{center}
\begin{figure}
 \includegraphics[width=7.0cm]{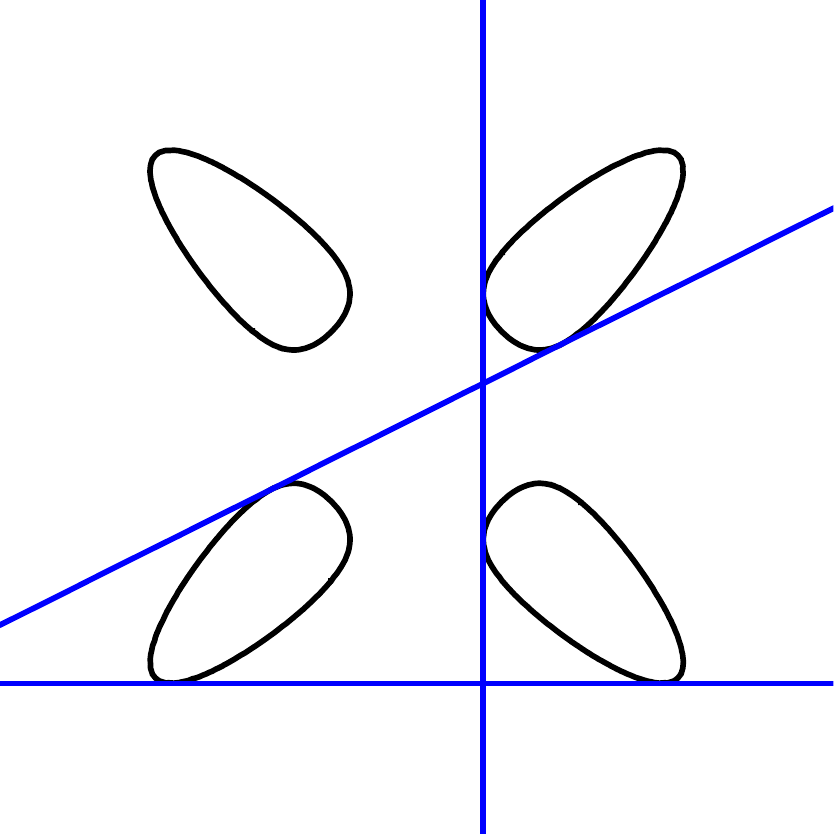} \quad 
 \includegraphics[width=6.8cm]{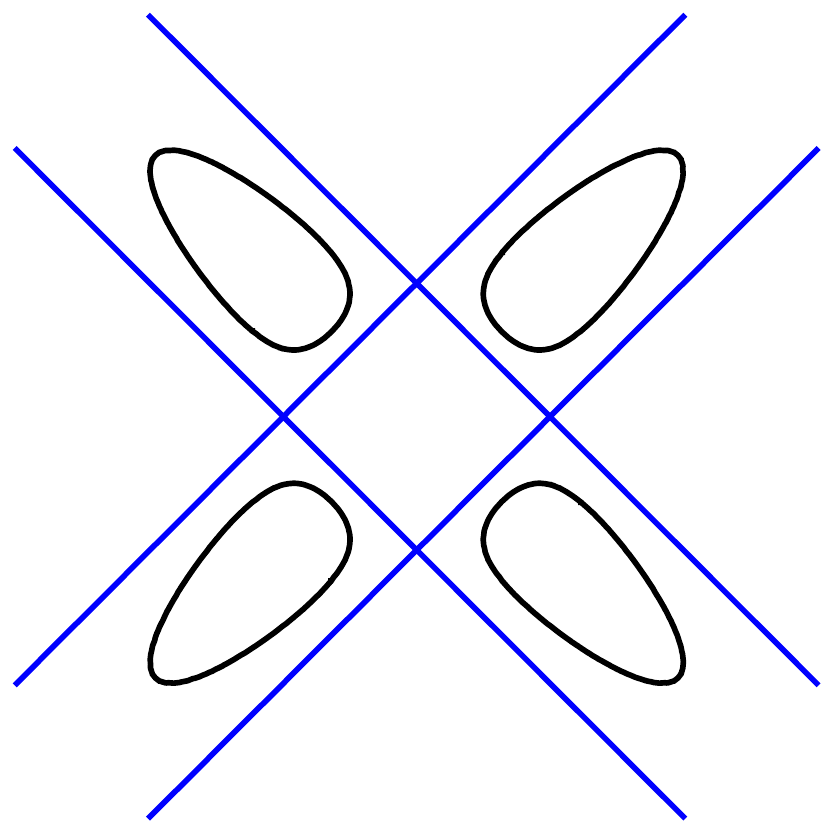}
\vskip -0.4cm
\caption{The Edge quartic and some of its $28$ bitangents}
\label{fig:edge}
\end{figure}
\end{center}

Each of the $28$ bitangents of the Edge quartic is defined over $\Q$,
but the four shown on the right in  Figure~\ref{fig:edge} 
are tangent at complex points of the curve.
The following theorem and Table~\ref{table:sixtypes} summarize the
possible shapes of real quartics.

\begin{Thm} \label{thm:sixtypes}
There are six possible topological types for a smooth
quartic curve $\mathcal{V}_\R(f)$ in the real projective plane.
They are listed in the first column of Table \ref{table:sixtypes}.
Each of these six types corresponds to only one connected
component in the complement of the discriminant
$\Delta$ in the  $14$-dimensional projective space of quartics.
\end{Thm}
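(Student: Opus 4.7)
The plan has two parts: first, enumerate the six topological types using Harnack and Bezout; second, show that each type occupies exactly one connected component of the complement of $\Delta$ in the real $14$-dimensional projective space of quartics.

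For the enumeration, a smooth plane quartic has geometric genus $g = \binom{3}{2} = 3$, so Harnack's inequality bounds the number of connected components of $\mathcal{V}_\R(f)$ by $g+1 = 4$. Since the degree is even, each real component is an oval (no pseudo-lines occur). Bezout's theorem applied to a line, which meets the quartic in at most $4$ points, rules out three mutually nested ovals as well as a single oval containing two disjoint ovals (either configuration would force a line through the inner region to meet $\mathcal{V}_\R(f)$ in $\ge 6$ points). The remaining possibilities are exactly the six cases listed in Table~\ref{table:sixtypes}, and each is realised by an explicit example (e.g.\ the Edge quartic for the four-oval case, small perturbations of products of two real conics or of four lines in general position for the others, and $x^4+y^4+z^4$ for the empty case).

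For the component statement, one direction is immediate: the diffeomorphism type of $\mathcal{V}_\R(f)$ is locally constant on the complement of $\Delta$ by the implicit function theorem, so each topological type is a nonempty open union of connected components. The nontrivial direction is that each type contributes only one component. The approach is to invoke the classical rigid isotopy classification of smooth real plane quartics, which goes back to Klein. One fixes a normal form $f_t$ for each topological type $t$ (for instance one of the explicit representatives above) and then connects any other smooth $f$ of type $t$ to $f_t$ by a real one-parameter path kept inside the complement of $\Delta$, using the $\mathrm{PGL}_3(\R)$-action to normalise the relative position of the ovals and linear homotopies, generically perturbed, for the residual variation.

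The main obstacle is this last step: ruling out ``exotic'' extra components of the complement of $\Delta$ that realise the same topology but are disconnected from the chosen normal form. Classically this is handled by wall-crossing analysis on $\Delta$: each generic stratum of $\Delta$ is associated with one of a short list of local topological moves (birth or death of a small oval, or two ovals coalescing into one), and enumerating these moves shows that the adjacency graph of regions closes up into precisely the six strata above, with no additional components. A full treatment is contained in the classical literature, e.g.~\cite{Dol, MR0123600}.
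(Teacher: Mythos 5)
The paper does not actually prove this theorem: it cites Zeuthen \cite{Zeu} for the classification and Klein \cite{Kle} for the fact that each topological type is a single connected component in the complement of $\{\Delta=0\}$, with Salmon \cite[Chapter~VI]{MR0115124} given as an exposition. You attempt to supply an argument where the paper gives only citations, so the comparison is between your sketch and the classical results invoked.

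Your enumeration via Harnack plus Bezout is the right approach, but as written the Bezout step is incomplete. Ruling out three mutually nested ovals and one oval containing two disjoint ovals does not by itself exclude, say, a nested pair together with one or two additional disjoint ovals, which is precisely what must be forbidden to conclude that three and four ovals occur only in the unnested configuration. The clean statement is: a nested pair forbids any additional oval (a line through the inner region and through any other oval already meets the curve in at least six points), and it also forbids deeper nesting; hence nesting occurs only in the two-oval case, and the count of admissible configurations is $1+1+2+1+1=6$. You should also note that the remaining six configurations are all realized, as you begin to do with explicit examples.

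For connectivity, you correctly identify that the hard direction is showing each type gives a single chamber and that this is Klein's rigid isotopy classification. The sketch you give (normal forms, $\mathrm{PGL}_3(\R)$ normalization, generic linear homotopies, wall-crossing through generic strata of $\Delta$) is the standard narrative, but as you yourself acknowledge it is not a proof and ultimately defers to the literature. That is acceptable and matches what the paper does, but your citations are off: \cite{Dol} and \cite{MR0123600} do not contain this result. The correct references within the paper's bibliography are Zeuthen \cite{Zeu} for the classification and Klein \cite{Kle} for the connectivity statement, with Salmon \cite{MR0115124} as a textbook exposition.
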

\begin{table}[htb] \label{table:sixtypes}
\begin{tabular}{llrr}
The real curve & Cayley octad &  Real bitangents & Real Steiner complexes \\
\hline
4 ovals & 8 real points & 28 & 63  \\
3 ovals & 6 real points & 16 & 31 \\
2 non-nested ovals & 4 real points & 8 & 15  \\
1 oval & 2 real points & 4 & 7  \\
2 nested ovals & 0 real points & 4 & 15 \\
empty curve & 0 real points & 4 & 15 \\
\end{tabular}
\medskip
\caption{The six types of smooth quartics in the real projective plane.}% with respect to a real LMR
\end{table}

The classification result in Theorem \ref{thm:sixtypes} is due to Zeuthen \cite{Zeu}.
An excellent exposition can be found in 
Salmon's book \cite[Chapter VI]{MR0115124}.
Klein \cite[\S 5]{Kle} proved that each type is connected in the complement of the discriminant
$\{\Delta = 0\}$.
We note that $\Delta$ is a homogeneous polynomial of degree $27$
in the $15$ coefficients $c_{ijk}$ of $f$. As a preprocessing step in our
algorithms, we use the explicit formula for $\Delta$ given in \cite[Proposition 6.5]{SSS}
to verify that a given quartic curve $\mathcal{V}_\C(f)$ is smooth.

\medskip
The present paper is organized as follows.
In Section~2 we present an algorithm, based on
Dixon's approach \cite{33.0140.04}, for computing one
determinantal representation (\ref{intro:LMR}).
The resulting $4 {\times} 4$-matrices $A,B$ and $C$ specify
three quadratic surfaces in $\P^3$ whose intersection
consists of eight points, known as a {\em Cayley octad}.

In Section~3 we use Cayley octads to compute representatives for all
$36$ inequivalent classes of determinantal representations
(\ref{intro:LMR}) of the given quartic $f$. This is accomplished by a
combinatorial algorithm developed by Hesse in \cite{049.1317cj}, which realizes
the Cremona action \cite{DO} on the Cayley octads.  The output
consists of $36$ symmetric $8 {\times} 8$-matrices
(\ref{BitangentMatrix}).  These have rank $4$ and their $28$ entries
are linear forms defining the bitangents.

In Section~4 we focus on {\em Vinnikov quartics},
that is, real quartics consisting of two nested ovals.
Helton and Vinnikov \cite{MR2292953} proved the existence of
a representation (\ref{intro:LMR}) over $\R$. We present a symbolic
algorithm for computing that representation in practice.
Our method uses exact arithmetic and writes the convex inner oval explicitly
as a spectrahedron. This settles a question raised by Henrion \cite[\S 1.2]{Hen}.

In Section 5 we identify sums of three squares with {\em Steiner
  complexes} of bitangents, and we compute all $63$ Gram matrices,
i.e.~all $6 {\times} 6$-matrices of rank $3$ as in (\ref{eq:Gram}),
again using only rational arithmetic over $K$. This ties in with the
results of Powers, Reznick, Scheiderer and Sottile in \cite{PRSS},
where it was proved that a smooth quartic $f$ has precisely $63$
inequivalent representations as a sum of three squares
(\ref{intro:SOS}). They strengthened Hilbert's theorem in
\cite{MR1510517} by showing that precisely eight of these $63$ are
real when $f$ is positive.

Section~\ref{sec:Gram} is devoted to the boundary and facial structure
of the {\em Gram spectrahedron}. This is the six-dimensional
spectrahedron consisting of all sums of squares representations of a
fixed positive ternary quartic $f$. We show that its eight special
vertices are connected by $12$ edges that form two complete graphs
$K_4$.  We also study the structure of the associated semidefinite
programming problems.

Section~\ref{sec:octad2} is devoted to the variety of Cayley octads \cite[\S IX.3]{DO}.
We discuss its defining equations and its boundary strata, we compute the
discriminants of (\ref{intro:LMR}) and (\ref{intro:SOS}), and
we end with a classification of nets of real quadrics in $\P^3$.

We have implemented most of the algorithms 
presented in this paper in the system
{\tt SAGE}\footnote{\tt www.sagemath.org}. Our software and
supplementary material on quartic curves and Cayley octads
can be found at 
 {\tt math.berkeley.edu/$\sim$cvinzant/quartics.html}.

\section{Computing a Symmetric Determinantal Representation}\label{sec:LMR}

We now prove, by way of a constructive algorithm, that
every smooth quartic admits a symmetric determinantal
representation (\ref{intro:LMR}). First we compute the
$28$ bitangents, $\ell =  \alpha x + \beta y + \gamma z \,$. Working on the affine chart
$\{\gamma = 1\}$, we equate
$$ f \bigl(x,y,- \alpha x - \beta y \bigr) \,\, = \,\,
(\kappa_0 x^2 + \kappa_1 x y + \kappa_2 y^2 )^2, $$
eliminate $\kappa_0, \kappa_1, \kappa_2$, 
and solve the resulting system for the unknowns
$\alpha$ and $\beta$. This constructs the splitting field $K$
for the given $f$ as a finite extension of~$\Q$.
All further computations in this section are performed
via rational arithmetic in $K$.

Next consider any one of the $\binom{28}{3} = 3276$ triples of bitangents.
Multiply their defining linear forms. The resulting polynomial
$\,v_{00} = \ell_1 \ell_2 \ell_3 \,$ is a {\em contact cubic} for $\sV_\C(f)$, which means that 
the ideal $\langle v_{00}, f \rangle $ in $K[x,y,z]$ 
defines six points in $\P^2$ each of multiplicity 2.
Six points that span three lines in $\P^2$ impose independent conditions on cubics, so
the space of cubics in the radical of $\langle v_{00},f \rangle $
is  $4$-dimensional over $K$. We extend $\{v_{00}\}$ to 
a basis $\{v_{00}, v_{01}, v_{02} , v_{03} \}$ of that space.

Max Noether's Fundamental Theorem  \cite[\S~5.5]{MR1042981} can be applied
to the  cubic $v_{00}$ and the quartic $f$. It implies that
a homogeneous polynomial lies in $\langle v_{00}, f \rangle$ if 
it vanishes to order two at each of the six points of $\sV_\C \bigl(\langle v_{00},f \rangle \bigr)$.
The latter property holds for the sextic forms $v_{0i} v_{0j}$. Hence 
$v_{0i} v_{0j}$ lies in $\langle v_{00}, f \rangle$ for  $1 \leq i \leq j \leq 3$.
Using the Extended Buchberger Algorithm, we can compute
cubics $v_{ij}$ such that
\begin{equation}
\label{twobytwominors}
 v_{0i} v_{0j} - v_{00} v_{ij} \,\in \, \langle  f \rangle. 
 \end{equation}
 
We now form a symmetric $4 {\times} 4$-matrix $V$
whose entries are cubics in $K[x,y,z]$:
$$ V \quad = \quad
\begin{pmatrix}
v_{00} & v_{01} & v_{02} & v_{03} \\
v_{01} & v_{11} & v_{12} & v_{13} \\
v_{02} & v_{12} & v_{22} & v_{23} \\
v_{03} & v_{13} & v_{23} & v_{33} 
\end{pmatrix} . $$
The following result is due to Dixon \cite{33.0140.04},
and it almost solves our problem.

\begin{Prop} \label{prop:dixon}
Each entry of the adjoint $V^{\rm adj}$
is a linear form times $f^2$, and 
$$ {\rm det}( f^{-2} \cdot V^{\rm adj}) \quad = \quad \gamma \cdot f(x,y,z) \qquad
\hbox{for some constant $ \gamma \in K$.} $$ 
Hence, if $\,{\rm det}(V) \not = 0\,$ then 
$\,f^{-2} \cdot V^{\rm adj}\,$ gives a linear matrix
representation (\ref{intro:LMR}).
\end{Prop}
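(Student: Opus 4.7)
My plan is to exploit the fact that the relations (2.2) force the matrix $V$ to have rank at most $1$ modulo $f$, and then to apply a standard rank-degeneration identity for determinants. Setting $u := (v_{00}, v_{01}, v_{02}, v_{03})^{T}$, the hypothesis (2.2) says exactly that $v_{00}v_{ij} - v_{0i}v_{0j}$ lies in $\langle f\rangle$ for all $i,j$, and moreover vanishes identically for $i=0$ or $j=0$. This packages as a polynomial identity
\[
v_{00}\, V \;=\; u\, u^{T} + f\, W,
\]
where $W$ is a symmetric $4\times 4$ matrix of quadratic forms whose zeroth row and column are zero. Localising at the height-one prime $\langle f\rangle$ — where $v_{00}$, being coprime to the irreducible quartic $f$, becomes a unit in the DVR $R := K[x,y,z]_{(f)}$ — I can rewrite the identity as $V = A + fB$ with $A := v_{00}^{-1}uu^{T}$ of rank at most $1$ over $R$.

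Next I would invoke the well-known multilinear lemma: if an $n \times n$ matrix $A$ over a commutative ring has all $(r{+}1)\times(r{+}1)$ minors equal to zero, then $\det(A + tB) \in R[t]$ is divisible by $t^{\,n-r}$. (This is immediate from the expansion of $\det(A+tB)$ in which the coefficient of $t^{k}$ is a sum of products of $(n{-}k)\times(n{-}k)$ minors of $A$ with $k\times k$ minors of $B$, and the former vanish as soon as $n - k > r$.) Applied to the full $4\times 4$ matrix $V = A + fB$ with $r = 1$, this gives $f^{3}\mid \det V$ in $R$; applied to each $3\times 3$ submatrix — whose $A$-part is a submatrix of a rank-$1$ matrix, hence still rank at most $1$ — it gives $f^{2}$ dividing every $3\times 3$ minor of $V$. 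Because $\langle f\rangle$ is prime in $K[x,y,z]$, these divisibilities transfer back to $K[x,y,z]$. A degree count now finishes the structural step: $\det V$ has degree $12 = \deg(f^{3})$, so $\det V = \gamma\, f^{3}$ for some scalar $\gamma \in K$, and each $3\times 3$ minor has degree $9 = \deg(f^{2}) + 1$, so every entry of $f^{-2} V^{\rm adj}$ is a linear form in $x,y,z$.

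The computation is closed by the classical identity $\det(V^{\rm adj}) = (\det V)^{n-1} = \gamma^{3} f^{9}$, whence
\[
\det\!\bigl(f^{-2}\, V^{\rm adj}\bigr) \;=\; f^{-8}\cdot \gamma^{3} f^{9} \;=\; \gamma^{3}\, f,
\]
so, provided $\det V\neq 0$, the symmetric matrix $f^{-2}V^{\rm adj}$ is a symmetric linear determinantal representation of $\gamma^{3} f$, which after a harmless global rescaling yields (1.2). The delicate step in the outline above is the passage from $V \equiv uu^{T}/v_{00} \pmod{f}$ (which on its own only yields $f$ dividing each $3\times 3$ minor) to the sharper divisibility by $f^{2}$; it is precisely the rank-degeneration lemma that supplies the extra factor of $f$, and this is the heart of Dixon's trick.
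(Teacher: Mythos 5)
Your proof is correct and takes essentially the same approach as the paper: observe that $V$ has rank $1$ modulo $f$, deduce that $f^2$ divides every $3\times 3$ minor and $f^3$ divides $\det V$, and conclude by a degree count together with a classical determinant identity. Your version is in fact somewhat more careful than the published one, which simply asserts the passage from ``$f$ divides the $2\times 2$ minors'' to ``$f^2$ divides the $3\times 3$ minors''; your multilinear rank-degeneration lemma (equivalently, Smith normal form over the DVR $K[x,y,z]_{(f)}$) supplies exactly the missing justification, and your closing use of $\det(V^{\rm adj})=(\det V)^{3}$ is a harmless variant of the paper's computation via $V\cdot V^{\rm adj}=\det(V)\,I_4$.
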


\begin{proof}
Since $v_{00} \not\in \langle f \rangle$, the condition 
(\ref{twobytwominors}) implies that,
over the quotient ring $K[x,y,z]/\langle f \rangle$,
the matrix $V$ has rank $1$.
 Hence, in the polynomial ring  $K[x,y,z]$,  the cubic $f$ divides
 all $2\times 2$ minors of $V$. This implies that $f^2$
divides all $3\times 3$ minors of $V$, and $f^3$ divides $\det(V)$.
As the entries of $V^{\rm adj}$ have degree $9$, it follows that $V^{\rm
  adj} = f^2 \cdot W$, where $W$ is a symmetric matrix whose entries are linear forms.
Similarly, as $\det(V)$ has degree $12$, we have $\det(V) = \delta f^3$ for some $\delta
\in K$, and $\delta \not= 0$ unless ${\rm det}(V)$ is identically zero.
Let $I_4$ denote the identity matrix. Then
$$ \delta f^3 \cdot I_4 \,\, = \,\, \det(V)\cdot I_4 \,\, = \,\,
V \cdot V^{\rm adj}  \,\, = \,\,f^2 \cdot V \cdot W .$$
Dividing by $f^2$ and taking determinants yields 
$$ \delta^4 f^4 \,\, = \,\, \det(V) \cdot \det(W) \,\, = \,\,
\delta f^3 \cdot {\rm det}(W). $$
This implies the desired identity  $\,\det(W)=\delta^3 f$. 
\end{proof}

We now identify the conditions to ensure that
${\rm det}(V)$ is not the zero polynomial.

\begin{Thm} \label{thm:1260}
The determinant of $V$ vanishes if and only if
the six points of $\mathcal{V}_\C(f,\ell_1\ell_2\ell_3)$,
 at which the bitangents 
$\ell_1, \ell_2,\ell_3$ touch the quartic curve
$\mathcal{V}_\C(f)$, lie on a conic in $\P^2$.
This happens for precisely 
$1260$ of the $3276$ triples of bitangents.
\end{Thm}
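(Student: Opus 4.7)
I would prove the two claims of the theorem separately: (i) $\det V = 0$ iff the six contact points $p_1, \ldots, p_6$ lie on a conic in $\P^2$, and (ii) this happens for exactly $1260$ of the $3276$ triples of bitangents.

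For the forward direction of (i), I assume a conic $q$ passes through $p_1, \ldots, p_6$. The four cubics $v_{00} = \ell_1\ell_2\ell_3$, $q\ell_1$, $q\ell_2$, $q\ell_3$ all vanish at these six points, and (generically, provided $q$ is irreducible and $\ell_1, \ell_2, \ell_3$ are not concurrent) form a basis of the $4$-dimensional space of cubics in the radical of $\langle v_{00}, f\rangle$; I take this as the basis $\{v_{00}, v_{01}, v_{02}, v_{03}\}$. Max Noether's Fundamental Theorem applied to the quartic $q^2$, which vanishes to the appropriate order at each $p_k$, yields a unique identity $q^2 = v_{00} L + cf$ for a linear form $L$ and constant $c$. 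Substituting into $v_{0i}v_{0j} = q^2 \ell_i \ell_j = v_{00}(L\ell_i\ell_j) + f(c\ell_i\ell_j)$ and invoking uniqueness of the $v_{ij}$, I obtain $v_{ij} = L\ell_i\ell_j$ for all $i, j \in \{1,2,3\}$. The three columns of $V$ indexed by $j = 1, 2, 3$ then all equal $\ell_j \cdot (q, L\ell_1, L\ell_2, L\ell_3)^T$, so they are pairwise proportional, $\rank(V) \le 2$, and $\det V = 0$.

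For the converse of (i), I would invoke the theta-characteristic interpretation of Dixon's construction. Set $C := \sV_\C(f)$, so $D := p_1 + \cdots + p_6$ is a divisor of degree $6$ on $C$, and the associated theta characteristic is $\theta := D - K_C \in \Pic^2(C)$. Classically (following Dixon \cite{33.0140.04}; see also \cite[Chapter~6]{Dol}), Dixon's matrix $V$ is nondegenerate precisely when $\theta$ is an even, non-effective theta characteristic; hence $\det V = 0$ iff $\theta$ is effective. By Riemann-Roch on the non-hyperelliptic quartic $C$, $\theta$ effective means $\theta \sim r_1 + r_2$ for a unique pair $r_1, r_2 \in C$, so $D \sim K_C + r_1 + r_2$. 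Since $h^0(K_C - r_1 - r_2) = h^0(r_1 + r_2) = 1$, the class $K_C - r_1 - r_2$ is represented by a unique effective divisor $s_1 + s_2$, and then $D + s_1 + s_2 \sim 2 K_C$ exhibits the eight points $p_1, \ldots, p_6, s_1, s_2$ as the intersection $C \cap Q$ for some conic $Q \subset \P^2$; in particular the six contact points lie on $Q$.

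For the count (ii), the cleanest approach is via the Cayley octad correspondence developed later in the paper: each of the $36$ determinantal representations (= Cayley octads) of $f$ assigns to each unordered triple of its $8$ octad points an asyzygetic triple of bitangents (i.e., one whose contact points do not lie on a conic), and this assignment is a bijection between asyzygetic triples and pairs (Cayley octad, $3$-subset of octad points). This yields $36 \cdot \binom{8}{3} = 2016$ asyzygetic triples, so the number of syzygetic triples is $3276 - 2016 = 1260$. The main obstacle in this plan is the converse direction of (i): the forward direction is a short explicit computation, but the converse relies on the classical dictionary between Dixon's determinantal construction and even non-effective theta characteristics. An elementary alternative is to introduce the auxiliary matrix $H' = (h_{ij})_{1\le i,j \le 3}$ with $v_{0i}v_{0j} = v_{00}v_{ij} + fh_{ij}$; the matrix determinant lemma then gives $v_{00}^2 \det V = -f^3 \det H'$, so $\det V = 0$ iff $\det H' = 0$, but extracting the conic condition directly from singularity of $H'$ appears to require further argument.
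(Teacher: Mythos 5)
Your proposal is correct, and it supplies actual arguments where the paper only cites: the paper's proof of this theorem defers the first assertion entirely to Dixon and the census of $1260$ triples to Salmon's table and Section~3. Your forward implication is the right explicit computation: extend $\{v_{00}\}$ to the basis $\{v_{00}, q\ell_1, q\ell_2, q\ell_3\}$, apply Max Noether to write $q^2 = v_{00}L + cf$, and read off $v_{ij} = L\ell_i\ell_j$, making the last three columns of $V$ the scalar multiples $\ell_j\cdot(q,L\ell_1,L\ell_2,L\ell_3)^T$ of a common vector. Since a change of basis fixing $v_{00}$ replaces $V$ by $SVS^T$ for an invertible $S$, the vanishing of $\det V$ is basis-independent, which legitimizes your special choice. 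The genericity caveats you flag (reducible $q$, concurrent $\ell_i$) would need to be ruled out or handled separately for a fully general argument. For the converse you appeal, as the paper does, to Dixon's identification of $\det V\neq 0$ with $\theta=D-K_C$ being a non-effective theta characteristic; your Riemann--Roch derivation that $\theta$ effective implies the six contact points lie on a conic is correct, and the reverse implication follows from Bezout applied to $q\cap \sV_\C(f)$, giving the full equivalence. The $36\cdot 56=2016$ count via Cayley octads matches Remark~\ref{rem:TrianglesAndSquares} of the paper. Your identity $v_{00}^2\det V = -f^3\det H'$ from the block-determinant formula is also correct, although as you note it does not by itself yield the geometric characterization.
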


\begin{proof}
Dixon \cite{33.0140.04} proves the first assertion. The census of triples
appears in the table on page 233 in Salmon's book \cite[\S 262]{MR0115124}.
It  is best understood via the Cayley octads in Section~3.
For further information see Dolgachev's notes \cite[\S 6.1]{Dol}. 
\end{proof}

\begin{Remark}
Let $\ell_1,\ell_2,\ell_3$ be any three bitangents of $\mathcal{V}_\C(f)$.
If the six intersection points with 
$\mathcal{V}_\C(f)$  lie on a conic,
the triple $\{\ell_1,\ell_2,\ell_3\}$ is called \emph{syzygetic}, otherwise \emph{azygetic}. 
A smooth quartic $f$ has  $1260 $ syzygetic and $2016$
azygetic triples of bitangents.
Similarly, a quadruple $\{\ell_1,\ell_2,\ell_3,\ell_4\}$ of
bitangents is called \emph{syzygetic} if its eight contact points lie
on a conic and \emph{azygetic} if they do not. Every syzygetic triple
$\ell_1,\ell_2,\ell_3$ determines a fourth bitangent $\ell_4$ with which
it forms a syzygetic quadruple. Indeed, if the contact points of
$\ell_1,\ell_2,\ell_3$ lie on a conic with defining polynomial $q$,
then $q^2$ lies in the ideal $\langle f, \ell_1 \ell_2 \ell_3 \rangle$,
so that $q^2 = \gamma f + \ell_1 \ell_2 \ell_3 \ell_4$, and 
the other two points in $\mathcal{V}_\C(f,q)$ must
be the contact points of the bitangent $\ell_4$.
\end{Remark}

\begin{Algorithm} \label{alginsec2}
Given a smooth ternary quartic $f\in\Q[x,y,z]$, we compute the
splitting field $K$ over which the $28$ bitangents of $\sV_\C(f)$ are
defined. We pick a random triple of bitangents and construct the
matrix $V$ via the above method. If $\det(V)\neq 0$,
we compute the adjoint of $V$ and divide by $f^2$, obtaining the
desired determinantal representation of $f$ over $K$. If $\det(V)=0$,
we pick a different triple of bitangents. On each iteration, the
probability for $\det(V)\neq 0$ is $\frac{2016}{3276}=\frac{8}{13}$.
% To compute a symmetric
% determinantal representation of $f$, we
% pick a random triple of bitangents
% and apply the above method to find the corresponding matrix $V$.
% With probability $\frac{2016}{3276}=\frac{8}{13}$ we have
% ${\rm det}(V) \not= 0$ and then we compute
%  the adjoint of $V$ and divide by $f^2$.
%  If ${\rm det}(V) = 0$ then we repeat.
 \end{Algorithm}
 
\begin{Example}
The diagram on the left of Figure \ref{fig:edge} shows an 
azygetic triple of bitangents to the Edge quartic. Here, the six points of tangency
do not lie on a conic. The representation of the Edge quartic in
(\ref{LMRforedge}) is produced by Algorithm~\ref{alginsec2} starting from 
 the contact cubic $\,v_{00} =2 (y + 2 z)(-2 x + z) (x - 2 y)$. \qed
\end{Example}

\section{Cayley Octads and the Cremona Action} \label{sec:octad}

Algorithm \ref{alginsec2} outputs a 
matrix $M=x A +y B+z C$ where $A,B,C$ are symmetric $4 {\times} 4$-matrices 
with entries in the subfield $K$ of $\C$ over which all $28$ bitangents of $\sV_\C(f)$ are defined.  Given one such 
representation (\ref{intro:LMR})
  of the quartic $f$, we shall construct a representative from each of the 35 other equivalence classes.
 Two representations (\ref{intro:LMR}) are considered {\em
  equivalent} if they are in the same orbit under the action of ${\rm
  GL_4}(\C)$ by conjugation $M \mapsto U^T M U$.
We shall present an algorithm for the following result.
It performs rational arithmetic over the splitting field
  $K$   of the $28$ bitangents, and it
    constructs one representative for each of the $36$ orbits. 
    
 \begin{Thm}[Hesse \cite{049.1317cj}]
 \label{thm:36}
Every smooth quartic curve $f$ has exactly $36$ equivalence classes
of linear symmetric determinantal representations (\ref{intro:LMR}).
\end{Thm}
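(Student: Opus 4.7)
The plan is to leverage the Cayley octad from Section~\ref{sec:LMR} to enumerate all equivalence classes starting from a single one. Given a representation $M_0 = xA+yB+zC$ produced by Algorithm~\ref{alginsec2}, the three symmetric matrices $A,B,C$ define quadric surfaces in $\P^3$ whose common zero locus is a Cayley octad $O_0 = \{p_1,\dots,p_8\}$; smoothness of $f$ guarantees eight distinct points in general linear position. The first step is to establish the fundamental bijection between the $28$ bitangents of $\sV_\C(f)$ and the $28$ unordered pairs of points in $O_0$: concretely, the rational map $(A:B:C)\colon \P^3 \dashrightarrow \P^2$ sends the line through $p_i$ and $p_j$ to a bitangent $\ell_{ij}$ of $f$, so the $28$ bitangents acquire a canonical labeling by pairs of octad points.

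Next I would observe that two determinantal representations $M, M'$ are equivalent under $\GL_4$-conjugation if and only if their Cayley octads are projectively equivalent in $\P^3$. Hence the enumeration reduces to counting projective equivalence classes of Cayley octads whose associated quartic is $f$. To pass from $O_0$ to other such octads, I invoke the Cremona action on eight points in $\P^3$ developed in \cite{DO}: for each subset $S \subset O_0$ of size four, the standard Cremona transformation of $\P^3$ based at $S$ carries $O_0$ to a new Cayley octad $O_S$, and the induced relabeling shows that $O_S$ still cuts out the \emph{same} quartic $f$, yielding a new representation. Since $S$ and its complement give rise to the same unordered $4{+}4$ partition and therefore the same $O_S$, there are $\binom{8}{4}/2 = 35$ such Cremona moves, producing $1+35 = 36$ candidate representations of $f$.

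The main obstacle is a pair of matching claims: (a)~the $36$ representations are pairwise inequivalent, and (b)~every representation of $f$ arises in this way. For (a), the strategy is to record how each Cremona move permutes the bitangent labels $\{i,j\} \subset \{1,\dots,8\}$ and to verify that distinct $4{+}4$ partitions of $\{1,\dots,8\}$ induce genuinely distinguishable relabelings; this is controlled by the combinatorics of $W(E_7)/\{\pm 1\}$ from (\ref{intro:galois}) and is precisely what Hesse's combinatorial table encodes. For (b), the cleanest route is the classical bijection between linear determinantal representations of $f$ and even theta characteristics on the genus-$3$ curve $\sV_\C(f)$: there are exactly $2^{g-1}(2^g+1) = 36$ even theta characteristics in genus~$3$, which matches the count above and certifies that no representation has been missed. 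An alternative, more combinatorial route identifies the stabilizer of a given representation inside $W(E_7)/\{\pm 1\}$ as a subgroup of index~$36$, whence transitivity of the Cremona action together with orbit-stabilizer finishes the proof.
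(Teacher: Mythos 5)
Your proposal follows essentially the same route as the paper: start from one representation via Dixon's method, form the Cayley octad, and generate the other $35$ classes by the Cremona action on the $\binom{8}{4}/2 = 35$ bifid partitions of the eight points. Two small remarks. First, your geometric description of the bitangent bijection has a slip: the rational map $\P^3 \dashrightarrow \P^2$ given by the net $(uAu^T : uBu^T : uCu^T)$ contracts the line $\overline{p_ip_j}$ to a single \emph{point} of $\P^2$ (since $p_iAp_i^T = p_jAp_j^T = 0$, etc.), and the bitangent is the line dual to that point, i.e.\ the linear form $O_iMO_j^T$ of Proposition~\ref{Prop:OctadToBitangents}. Second, where the paper states pairwise inequivalence and completeness rather tersely --- it proves that $M'_I \sim M'_J$ iff $I$ and $J$ are disjoint (Theorem~\ref{Thm:BifidSubstitutions}, using Corollary~\ref{cor:bifid} and the syzygetic/azygetic dichotomy) and defers the statement that there are no further classes to Hesse --- your closing appeal to the $2^{g-1}(2^g+1) = 36$ even theta characteristics in genus $3$ actually supplies the completeness argument that the paper leaves implicit, and is a clean way to certify that the $36$ constructed classes exhaust all of them. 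Conversely, your part (a) is stated only as a strategy ("verify that distinct $4{+}4$ partitions induce distinguishable relabelings"); the paper's Corollary~\ref{cor:bifid} is exactly the computation you would need to carry out to make that rigorous, so the two arguments are complementary rather than in conflict.
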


 Our algorithms begins by
      intersecting the three quadric surfaces seen in   $M$:
 \begin{equation}
 \label{threequadrics}   u A u^T \, = \, u B u^T \, = \, u C u^T \,\, = \,\, 0 \quad \hbox{where} \,\,\,\,
  u = (u_0:u_1:u_2:u_3) \,\, \in \, \P^3(\C) .
  \end{equation}
  These equations have eight solutions $O_1,\ldots,O_8$.
This is the {\em Cayley octad} of $M$. In general, a Cayley octad
is the complete intersection of three quadrics in $\P^3(\C)$.

The next proposition gives a bijection between
the $28$ bitangents of $\mathcal{V}_\C(f)$  and
the lines $\ol{O_iO_j}$ for $1 \leq i \leq j \leq 8$.
The combinatorial structure 
of this configuration of $28$ lines in $\P^3$ plays an important role for our algorithms.

\begin{Prop}\label{Prop:OctadToBitangents}
  Let $O_1,\ldots,O_8$ be the Cayley octad defined above. Then the 
  $28$ linear forms $O_i M O_j^T \in \C[x,y,z]$ are the equations of
  the   bitangents of $\mathcal{V}_\C(f)$.
\end{Prop}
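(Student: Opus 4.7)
My plan is to reduce the statement to a direct $4\times 4$ determinant computation, after using the $\GL_4(\C)$-freedom in the representation to place $O_i$ and $O_j$ in standard position.

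Fix $1\le i<j\le 8$ and choose any $U\in\GL_4(\C)$ whose first two columns are $O_i^T$ and $O_j^T$. Replacing $M$ by $M':=U^T M U$ multiplies $\det(M)$ by the nonzero constant $\det(U)^2$, so the quartic $f$ is rescaled but $\sV_\C(f)$ and its bitangents are unchanged. The Cayley octad for $M'$ is $\{O_k U^{-T}\}_{k=1}^8$, whose first two points are now $e_1$ and $e_2$, and one checks directly that
$$ O_i M O_j^T \;=\; e_1 M' e_2^T \;=\; m'_{12}(x,y,z), $$
the $(1,2)$-entry of $M'$. Dropping the prime, I may assume $O_i=e_1$, $O_j=e_2$, and $\ell_{ij}=m_{12}$.

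The octad conditions $e_1 A e_1^T = e_2 A e_2^T = 0$ (and likewise for $B$ and $C$) say that $m_{11}$ and $m_{22}$ vanish identically as linear forms in $x,y,z$. On the line $\{m_{12}=0\}\subset\P^2$ the matrix $M$ therefore becomes block antidiagonal,
$$ M \;=\; \begin{pmatrix} 0 & X \\ X^T & Z \end{pmatrix},\qquad X = \begin{pmatrix} m_{13} & m_{14} \\ m_{23} & m_{24} \end{pmatrix}, $$
with $Z$ a symmetric $2\times 2$ block of linear forms. Swapping rows $1\leftrightarrow 3$ and $2\leftrightarrow 4$ is an even permutation and converts $M$ into block upper triangular form with diagonal blocks $X^T$ and $X$, so
$$ f|_{\ell_{ij}=0} \;=\; \det(X)^2 \;=\; \bigl(m_{13}m_{24}-m_{14}m_{23}\bigr)^2. $$
Since the smooth quartic $f$ is irreducible, the conic $q_{ij}:=m_{13}m_{24}-m_{14}m_{23}$ cannot vanish identically on the line (that would force the line into $\sV_\C(f)$), and $\ell_{ij}$ itself is nonzero (otherwise $\overline{O_iO_j}\subset\P^3$ would lie in all three quadrics $A,B,C$, spoiling the zero-dimensionality of the Cayley octad). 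Hence $\ell_{ij}$ meets $\sV_\C(f)$ in precisely two points, each with multiplicity two, which is the definition of a bitangent.

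Running the argument over all $\binom{8}{2}=28$ pairs produces $28$ bitangent linear forms. Pl\"ucker's classical count, recalled in the introduction, says that a smooth plane quartic has exactly $28$ bitangents, so the proposition will follow once the $28$ forms $O_i M O_j^T$ are known to define pairwise distinct lines in $\P^2$. I expect this distinctness to be the main subtle point: a coincidence $\ell_{ij}=\lambda\,\ell_{kl}$ would force every quadric in the net containing the line $\overline{O_iO_j}\subset\P^3$ to also contain $\overline{O_kO_l}$, a highly non-generic constraint on the octad. It can be ruled out by the smoothness of $f$ (which makes the octad suitably generic), or, alternatively, verified once and for all on a single explicit smooth quartic such as the Edge quartic of (\ref{LMRforedge}) and then propagated through the connected moduli of smooth quartics, since the number of distinct bitangents is locally constant away from the discriminant.
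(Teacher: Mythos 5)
Your proof follows essentially the same route as the paper's: normalize $O_i,O_j$ to the first two standard basis vectors via the $\GL_4$-action, observe that the octad conditions kill the $(1,1)$ and $(2,2)$ entries of $M$, and deduce $f\equiv(\text{a }2\times2\text{ minor})^2\pmod{b_{ij}}$ (the paper states the congruence directly by ``expanding $\det(M)$ and sorting terms,'' while you exhibit it via the row swap). Your added checks that $\ell_{ij}$ is not identically zero and that the conic $q_{ij}$ does not vanish on the line are worthwhile and are left implicit in the paper; likewise, the distinctness of the $28$ forms, which you rightly flag and only sketch, is also not argued in the paper's proof --- both treatments implicitly rely on the general position of a Cayley octad attached to a smooth quartic.
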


\begin{proof}
  Fix $i\neq j$.  After a change of basis on $\C^4$ given by a matrix
  $U\in{\rm GL}_4(\C)$ and replacing $M$ by $U^TMU$, we may assume
  that $O_i=(1,0,0,0)$ and $O_j=(0,1,0,0)$. The linear form
  $b_{ij}=O_i M O_j^T$ now appears in the matrix:
  \[
M=\left(
  \begin{array}{c|c}
    \begin{matrix}
      0 & b_{ij}\\
      b_{ij} & 0
    \end{matrix} & M'\\
    \hline
    (M')^T & \ast    
  \end{array}
\right).
\]
Expanding $\det(M)$ and sorting for terms containing $b_{ij}$ shows
that $f{=}\det(M)$ is congruent to ${\rm det}(M')^2$ modulo $\langle
b_{ij} \rangle$. This means that $b_{ij}$ is a bitangent.
\end{proof}

Let $O$ be the $8 \times 4$-matrix with rows given by the Cayley
octad.  The symmetric $8 {\times} 8$-matrix $O M O^T$ has rank $4$,
and we call it the {\em bitangent matrix} of $M$.  By the definition
of $O$, the bitangent matrix has zeros on the diagonal, and,
by Proposition \ref{Prop:OctadToBitangents}, its $28$ off-diagonal
entries are precisely the equations of the bitangents:
\begin{equation}
\label{BitangentMatrix}
 O M O^T \quad = \quad
 \begin{smaller}
\begin{pmatrix} 
0 & b_{12} & b_{13} & b_{14} & b_{15} & b_{16} & b_{17} & b_{18} \\
b_{12} & 0 & b_{23} & b_{24} & b_{25} & b_{26} & b_{27} & b_{28} \\
b_{13} & b_{23} &  0 & b_{34} & b_{35} & b_{36} & b_{37} & b_{38} \\
b_{14} & b_{24} & b_{34} & 0 & b_{45} &  b_{46} & b_{47} & b_{48} \\
b_{15} & b_{25} & b_{35} & b_{45} & 0 &  b_{56} & b_{57} & b_{58} \\
b_{16} & b_{26} & b_{36} & b_{46} &  b_{56} & 0 & b_{67} & b_{68} \\
b_{17} & b_{27} & b_{37} & b_{47} &  b_{57} & b_{67} & 0 & b_{78} \\
b_{18} & b_{28} & b_{38} & b_{48} &  b_{58} & b_{68} & b_{78} & 0 
\end{pmatrix}.
\end{smaller}
\end{equation}

\begin{Remark} We can see that the octad $O_1, \hdots, O_8$ consists of $K$-rational points of $\P^3$: 
To see this, let $K'$ be the field of definition of the octad over $K$. Then any element $\sigma$ of ${\rm Gal}(K'\!:\!K)$ acts on the octad by permutation, and thus permutes the indices of the bitangents, $b_{ij}$. On the other hand, as all bitangents are defined over $K$, $\sigma$ must fix $b_{ij}$ (up to a constant factor).  Thus the permutation induced by $\sigma$ on the octad must be the identity and ${\rm Gal}(K'\!:\!K)$ is the trivial group.

\end{Remark}
\begin{Example}
The symmetric matrix $M$ in (\ref{LMRforedge}) determines the 
Cayley octad
$$
O^T \quad = \quad \begin{pmatrix}
 \,1&0&0&0&-1&1&1&3\\
\,0&1&0&0&3&-1&1&1\\
\,0&0&1&0&1&3&1&-1 \\
\,0&0&0&1&-1&-1&3&-1 \end{pmatrix}. $$
All the $28$ bitangents of $E(x,y,z)$ are revealed 
in the bitangent matrix $\,O M O^T = $ 
\[ 
\begin{tiny}
\!\!\!
\begin{pmatrix} 
0\!\! & \!x+2y\!\! & \!2x+z\!\! & \!y-2z\!\! & \!5x \! + \! 5y \! + \! 3z
 \!\! & \! 5x \! - \! 3y \! + \! 5z\!\! & \! 3x \!+ \! 5y\! - \! 5z\!\! & \!-x \!+\!y \!+\!z\\
 x+2y\!\! & \!0\!\! & \!y+2 z\!\! & \!-2x+z\!\! & \!x\!- \! y \! + \! z\!\! &
 \!3 x\! + \! 5y\! + \!
 5z\!\! &\! \!-5x\!+\!3y\!+\! 5z\!\! & \!5x\!+\!5y\!-\!3z\\
 2x+z\!\! & \!y+2z\!\! & \!0\!\! & \!x-2y\!\! & \!\!-3x \!+\!5z\!+\!5y\!\! &
 \!x\!-\!z\!+\!y\!\! & \!5x\!+\!3z\!-\!5y\!\! & \!5x\!+\!5z\!+\!3y\\
 y-2z\!\! & \!-2x+z\!\! & \!x-2y\!\! & \!0\!\! & \!\!  -3y\!+\!5z\!-\!5x\!\! &
 \!-5y\!-\!3z\!+\!5x\!\! & \!-y\!-\!z\!-x\!\! & \!5y\!-\!5z\!-\!3x \\
\!  5x\!+\!5y\!+\!3z\!\! & \!x\!-\!y\!+\!z\!\! & \!\!-3x\!+\!5z\!+\!5y\!\! &
 \!-3y\!+\!5z\!-\!5x\!\!
 & \!0\!\! & \!24y+12z\!\! & \!-12x+24z\!\! & \!24x+12y\\ \!
 5x\!-\!3y\!+\!5z\!\! & \!3x\!+\!5y\!+\!5z\!\! & \!x\!-\!z\!+\!y\!\! &
 \!-5y\!-\!3z\!+\!5x\!\! & \!24y+12z\!\! & \!0\!\! & \!24x-12y\!\! & \!
 12x+24z\\ \! 3x\!+\!5y\!-\!5z\!\! & \!\!-5x\!+\!3y\!+\!5z\!\!  &
 \!5x\!+\!3z\!-\!5y\!\! & \!-y\!-\!z\!-\!x\!\! & \!-12x+24z\!\! &
 \!24x-12y\!\! & \!0\!\! & \!24y-12z \\ -x\!+\!y\!+\!z\!\! &
 \!5x\!+\!5y\!-\!3z\!\! & \!5x\!+\!5z\!+\!3y\!\! & \!5y\!-\!5z\!-\!  3x\!\!
 & \!24x+12y\!\! & \!12x+24z\!\! & \!24y-12z\!\! & \!0
\end{pmatrix}
\end{tiny} 
\]
Each principal $4 {\times} 4$-minors of this matrix is a multiple of
$E(x,y,z)$, as in (\ref{eq:symmetricdeterminant}).  \qed
\end{Example}

Each principal $3 {\times} 3$-minor of the bitangent matrix (\ref{BitangentMatrix})
is a contact cubic $2 b_{ij} b_{ik} b_{jk}$ of $\mathcal{V}_\C(f)$
and can serve as the starting point for the procedure in Section 2.
Hence, each principal $4 {\times} 4$-minor  $M_{ijkl}$ of (\ref{BitangentMatrix})
represents the same quartic:
\begin{equation}\label{eq:symmetricdeterminant} \begin{matrix}
\det(M_{ijkl}) \,\,\,= \,\,\,  \hbox{a non-zero scalar multiple of} \,\, f(x,y,z) 
\qquad \qquad \qquad \qquad
\\ \qquad = \,\,\,
b_{i j}^2b_{ k l}^2+b_{ i k}^2b_{ j l}^2+b_{ i l}^2b_{ j k}^2-2(b_{ i j}b_{ i k}b_{ j l}b_{ k l}+b_{ i j}b_{ i l}b_{ j k}b_{ k l}+b_{ i k}b_{ i l}b_{ j k}b_{ j l}) .
\end{matrix}
\end{equation}
However, all these $\binom{8}{4} = 70$ realizations of (\ref{intro:LMR}) lie in the
same equivalence class. 

In what follows,
we present a simple recipe due to Hesse \cite{049.1317cj} for finding $35$ alternate bitangent matrices, 
each of which lies in a different  ${\rm GL}_4(\C)$-orbit. 
This furnishes all $36$ inequivalent determinantal representations promised
in Theorem \ref{thm:36}.
We begin with a remark that explains the number $1260$ in Theorem~\ref{thm:1260}.

\begin{Remark}\label{rem:TrianglesAndSquares}
We can use the combinatorics of the Cayley octad to classify syzygetic collections of 
bitangents. There are $56 $ triples $\bigtriangleup$ of the form
$\{b_{ij},b_{ik},b_{jk}\}$. 
Any such triple is azygetic, by the if-direction in Theorem~\ref{thm:1260},
because the cubic $b_{ij} b_{ik} b_{jk}$
appears on the diagonal
of the adjoint of the invertible matrix $M_{ijkl}$. 
Every product of an azygetic triple of bitangents appears as a $3\times 3$ minor of 
exactly one of the 36 inequivalent bitangent matrices, giving $36 \cdot 56 = 2016$ azygetic triples of bitangents and $\binom{28}{3}-2016 = 1260$ syzygetic triples.  

A quadruple of bitangents of type $\square$ is of the form
$\{b_{ij},b_{jk},b_{kl},b_{il}\}$. Any such quadruple is syzygetic.
Indeed, equation \eqref{eq:symmetricdeterminant} implies 
$\,f + 4(b_{ij}b_{jk}b_{kl}b_{il}) \,=\,
(b_{ij}b_{kl}-b_{ik}b_{jl}+b_{il}b_{jk})^2$,
and this reveals a conic containing the eight points of contact.
\end{Remark}

Consider the following matrix which is gotten by
permuting the entries of $M_{ijkl}$:
\[
M'_{ijkl} \quad = \quad
\left(
  \begin{matrix}
    0       & b_{ kl} & b_{ jl} & b_{ j k}\\
    b_{ kl} & 0       & b_{ il} & b_{ i k}\\
    b_{ jl} & b_{ il} & 0       & b_{ i j}\\
    b_{ j k} & b_{ i k} & b_{ i j} & 0
  \end{matrix}
\right).
\]
This procedure does not change the determinant:
${\rm det}(M'_{ijkl}) =  {\rm det}(M_{ijkl}) = f$.
This gives us $70$ linear determinantal representations 
(\ref{intro:LMR}) of the quartic $f$, one for each 
quadruple $I = \{i,j,k,l\} \subset \{1,\ldots,8\}$.
These are equivalent in pairs:

\begin{Thm} \label{Thm:BifidSubstitutions}
If $I\neq J$
are quadruples in $\{1,\hdots, 8\}$, then the
symmetric matrices $M'_I$ and $M'_J$ are
in the same ${\rm GL}_4(\C)$-orbit if and only if
$I$ and $J$ are disjoint.
None of these orbits contains the original matrix 
$M = xA + yB + zC$.
\end{Thm}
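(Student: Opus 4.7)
My approach is to use the Cayley octad as a complete projective invariant of the equivalence class of a determinantal representation. For any representation $M = xA+yB+zC$, the octad $\Omega(M) \subset \P^3$ is the complete intersection of the three quadrics $u^T A u = u^T B u = u^T C u = 0$, and two representations are $\GL_4(\C)$-equivalent if and only if their octads are projectively equivalent as unordered 8-point configurations. One direction is immediate from conjugation $M \mapsto U^T M U$, which moves the octad by $U^{-1}$; the other follows because the bitangents $b_{ij} = O_i M O_j^T$ determine $M$ from a labeled octad up to $\GL_4(\C)$.

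The central step is to identify the Cayley octad $\Omega(M'_I)$ in terms of $\Omega = \Omega(M)$. I claim that $\Omega(M'_I)$ is the Cremona transform $\phi_I(\Omega)$, where $\phi_I \colon \P^3 \ratmap \P^3$ is the standard Cremona involution with fundamental set $\{O_a : a \in I\}$, fixing these four points and inverting the other four. To verify this, apply a linear change of coordinates placing $O_i, O_j, O_k, O_l$ at the standard basis vectors of $\P^3$ (which does not affect the equivalence class), write $M'_I$ in this frame, and solve the three quadrics $u^T A' u = u^T B' u = u^T C' u = 0$ for the octad. The combinatorial swap $b_{ab} \leftrightarrow b_{cd}$ (for $\{a,b,c,d\} = I$) that defines Hesse's substitution realizes precisely the geometric action of $\phi_I$ on $\Omega$ in this model.

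With this identification in hand, the theorem reduces to two classical properties of Cremona-transformed octads, cf.~Dolgachev--Ortland \cite[\S IX.3]{DO}. The first is the \emph{association} property: $\phi_I(\Omega)$ and $\phi_{I^c}(\Omega)$ are projectively equivalent octads for complementary 4-subsets $I, I^c \subset \{1,\ldots,8\}$, yielding $M'_I \sim M'_J$ whenever $I$ and $J$ are disjoint. The second is that for a generic octad $\Omega$ these $35$ Cremona transforms, indexed by unordered pairs $\{I, I^c\}$, are pairwise projectively inequivalent, and none equals $\Omega$ itself; combined with the orbit of $M$ this accounts for the 36 orbits of Theorem~\ref{thm:36}. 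The main technical obstacle is the precise identification of Hesse's combinatorial substitution with the geometric Cremona action; this requires careful bookkeeping of the octad points through the substitution, and makes essential use of the detailed combinatorics of the bitangent matrix developed earlier in this section.
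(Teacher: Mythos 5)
Your approach is essentially the paper's: identify the Cayley octad $\Omega(M'_I)$ with the Cremona transform $\phi_I(\Omega(M))$ of the octad at the four points indexed by $I$, then appeal to the Cremona action ${\rm cr}_{3,8}$ on octads and Coble's self-association result (Theorem~\ref{thm:coble}) to derive the orbit structure, in particular that the action changes the orbit and that complementary $4$-sets yield the same orbit.

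However, you leave the pivotal identification $\Omega(M'_I) = \phi_I(\Omega(M))$ as an acknowledged open step ("the main technical obstacle"), describing a strategy (normalize $O_i,O_j,O_k,O_l$ to the coordinate simplex, solve the quadrics) without executing it. The paper closes exactly this gap with a one-line polynomial identity that you should produce: for $I=\{1,2,3,4\}$, after the normalization you propose (so that $M_{1234}$ has zero diagonal and off-diagonal entries $b_{ij}$), one checks
\[
u^T M_{1234}\, u \;=\; u_0 u_1 u_2 u_3 \cdot (u^{-1})^T M'_{1234}\, (u^{-1}),
\]
where $u^{-1} = (u_0^{-1}:u_1^{-1}:u_2^{-1}:u_3^{-1})$ is the coordinatewise inverse. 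This makes it immediate that $u$ is a base point of the net defined by $M_{1234}$ if and only if $u^{-1}$ is a base point of the net defined by $M'_{1234}$ --- i.e., the two octads are exchanged by the standard Cremona involution. This is not a routine omission: everything afterwards (inequivalence with $M$, equivalence $M'_I \sim M'_{I^c}$, the count of $36$) hinges on it, so without the identity (or an equivalent explicit computation) the argument does not yet establish the theorem. A secondary point: you qualify the pairwise inequivalence of the $35$ Cremona transforms with "for a generic octad," but the theorem is asserted for every smooth quartic; the relevant hypothesis is smoothness of the curve rather than genericity of the parameters.
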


\begin{proof}
Fix $I = \{1,2,3,4\}$ and note the following identity
in $K[x,y,z,u_0,u_1,u_2,u_3]$:
\[
\left(
  \begin{matrix}
    u_0\\ u_1\\ u_2\\ u_3
  \end{matrix}
\right)^{\!\! T} \!\!\!\!
\left(
  \begin{matrix}
    0       & b_{12} & b_{13} & b_{14}\\
    b_{12} & 0       & b_{23} & b_{24}\\
    b_{13} & b_{23} & 0       & b_{34}\\
    b_{14} & b_{24} & b_{34} & 0
  \end{matrix}
\right) \!\!
\left(
  \begin{matrix}
    u_0\\ u_1\\ u_2\\ u_3
  \end{matrix}
\right)
\,=\,
u_0u_1u_2u_3
\left(
  \begin{matrix}
    u_0^{-1}\\ u_1^{-1}\\ u_2^{-1}\\ u_3^{-1}
  \end{matrix}
\right)^{\!\! T}
\!\! \!\!
\left(
  \begin{matrix}
    0       & b_{34} & b_{24} & b_{23}\\
    b_{34} & 0       & b_{14} & b_{13}\\
    b_{24} & b_{14} & 0       & b_{12}\\
    b_{23} & b_{13} & b_{12} & 0
  \end{matrix}
\right) \!\!
\left(
  \begin{matrix}
    u_0^{-1}\\ u_1^{-1}\\ u_2^{-1}\\ u_3^{-1}
  \end{matrix}
\right)
\]
This shows that the Cayley octad of $M'_{1234}$
is obtained from the Cayley octad of $M_{1234}$
by applying the  {\em Cremona transformation} at 
$O_1, O_2, O_3, O_4$. 
Equivalently, observe that the standard basis vectors of $\Q^4$ are the first four points in 
the Cayley octads of both $M_{1234}$ and $M_{1234}'$, and
if $O_i = (\alpha_i:\beta_i:\gamma_i:\delta_i)$
for $i=5,6,7,8$ belong to the Cayley octad of $M_{1234}$, then 
$O'_i = ({\alpha_i}^{-1}:\beta_i^{-1}:\gamma_i^{-1}:\delta_i^{-1})$
for $i=5,6,7,8$ belong to the Cayley octad $O'$ of $M'_{1234}$.

Thus the transformation from $M_{ijkl}$ to $M'_{ijkl}$
corresponds to the Cremona action ${\rm cr}_{3,8}$
on Cayley octads, as described on page 107
in the book of Dolgachev and Ortland \cite{DO}.
Each Cremona transformation changes the 
projective equivalence class of the Cayley octad,
and altogether we recover the $36$ distinct classes.
That $M'_I$ is equivalent to $M'_J$ when
$I$ and $J$ are disjoint can be explained by the following
result due to Coble \cite{Cob}. See \cite[\S III.3]{DO}
for a derivation in modern terms.
\end{proof}

\begin{Thm} \label{thm:coble}
Let $O$ be an unlabeled configuration of
eight points in linearly general position in $\P^3$.
Then $O$ is a Cayley octad
(i.e.~the intersection of three quadrics)
if and only if $O$ is self-associated (i.e.~fixed under Gale duality; cf.~\cite{EP}).
\end{Thm}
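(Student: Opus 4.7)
The approach is to reduce both conditions on $O$ to the same linear-algebraic equation, then analyze the non-degeneracy of the scalars appearing in it. Write $O$ as the $8 \times 4$ matrix whose rows $O_1,\ldots,O_8 \in \C^4$ are homogeneous coordinates for the eight points, and let $S$ denote the $10$-dimensional space of symmetric $4 \times 4$ matrices. Since $O_i Q O_i^T = \trace(Q \cdot O_i^T O_i)$ for $Q \in S$, a linear relation $\sum_i \lambda_i(O_i Q O_i^T) = 0$ holds for all $Q \in S$ if and only if
\[ O^T \Lambda O \;=\; 0, \qquad \Lambda := \mathrm{diag}(\lambda_1,\ldots,\lambda_8). \]
Hence the eight vanishing conditions on $S$ fail to be independent iff some nonzero diagonal $\Lambda$ satisfies $O^T \Lambda O = 0$. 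The Gale-dual description of self-association \cite{EP}---after rescaling the $i$-th row of $O$ by $\sqrt{c_i}$ so that the column span of the rescaled matrix coincides with its own orthogonal complement in $\C^8$---translates to the same equation $O^T C O = 0$, but with $C$ an \emph{invertible} diagonal. The first step of the proof is to set up this dictionary rigorously.

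For the direction ``Cayley octad $\Rightarrow$ self-associated,'' suppose $O = \sV(A)\cap\sV(B)\cap\sV(C)$ is a complete intersection. The graded ideal of a complete intersection of three quadrics in $\P^3$ is generated in degree two by $A, B, C$, so the space of quadrics through $O$ is exactly three-dimensional and the eight conditions on $S$ have rank exactly seven. The admissible $\Lambda$ thus form a line, and I would then argue that the generator has no zero entries. Indeed, if $\lambda_i = 0$, the remaining seven points $O_j$ ($j \ne i$) would fail to impose seven independent conditions on quadrics in $\P^3$; by a classical result this forces those seven points to lie on a line, a plane conic, or a twisted cubic, each excluded by linearly general position.

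For the converse, if $O^T C O = 0$ for some invertible diagonal $C$, the space of quadrics through $O$ is at least three-dimensional; pick three linearly independent quadrics $A, B, C'$ in it. Their scheme-theoretic intersection has degree $8$ by Bezout. Linear general position prevents a common curve component (a curve through the $O_i$ would constrain them to a lower-dimensional locus incompatible with the genericity of the $O_i$), so the intersection is zero-dimensional and reduced of degree $8$; since it contains the $O_i$, it equals $O$, and $O$ is a Cayley octad.

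The main technical obstacle is the step ``the admissible $\Lambda$ is invertible'' in the second paragraph, which hinges on the classical characterization of special seven-point configurations in $\P^3$ with respect to quadrics; making this precise amounts to pinning down the right strength of ``linearly general position'' (being no four coplanar should suffice). An alternative, closer to the spirit of the paper, would be to follow Coble's original Cremona-action argument \cite{Cob}: both loci---Cayley octads and self-associated octads---are irreducible of the same dimension inside the moduli space of unlabeled octads, and the Cremona action invoked in the proof of Theorem~\ref{Thm:BifidSubstitutions} sends each Cayley octad to its associated set, yielding the equivalence directly.
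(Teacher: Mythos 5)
The paper does not actually prove this statement---it is quoted from Coble \cite{Cob} with a pointer to \cite[\S III.3]{DO} for a modern derivation---so your proposal is a genuinely independent attempt rather than a reconstruction. The linear-algebraic reduction in your first paragraph is correct and clean: the conditions that a quadric $Q$ pass through $O_1,\dots,O_8$ pair against diagonal matrices via $\sum_i\lambda_i\,\trace(Q\,O_i^T O_i)=\trace(Q\,O^T\Lambda O)$, so the kernel of the evaluation map on $S$ is dual to $\{\Lambda \text{ diagonal}: O^T\Lambda O=0\}$, while self-association is existence of an \emph{invertible} such $\Lambda$. Matching the two thus reduces, as you say, to a non-degeneracy analysis.

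There are two places that need repair. In the forward direction, the ``classical result'' is misremembered: seven points on a twisted cubic \emph{do} impose independent conditions on quadrics (any quadric through seven of them contains the cubic, and quadrics through the cubic form a net, so the space is exactly $3$-dimensional, i.e.\ $10-7$). The conclusion you need---that seven points in linearly general position impose independent conditions on quadrics---is nonetheless true, but is more easily obtained directly: given $p_1,\dots,p_7$ with no four coplanar, split $p_2,\dots,p_7$ into two triples spanning planes $H_1,H_2$; then $H_1\cup H_2$ is a quadric through $p_2,\dots,p_7$ missing $p_1$ (since $p_1\in H_j$ would force four coplanar points). Applied to each index, this shows the conditions are independent, so $\lambda_i=0$ is impossible, and the forward direction closes.

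The converse is where the real gap is. Your claim that ``linear general position prevents a common curve component'' is false for the twisted cubic: eight points on a twisted cubic are in linearly general position, they \emph{are} self-associated (this is the classical fact that $2r+2$ points on a rational normal curve in $\P^r$ are self-associated, which also follows directly from your invertible-$\Lambda$ criterion together with the paragraph above), yet the base locus of the net of quadrics through them is the cubic, a curve---so $O$ is not the scheme-theoretic intersection of three quadrics in the strict sense. This is precisely the degenerate stratum that the paper itself acknowledges in Section~\ref{sec:octad2}, item (3), as a boundary divisor of the compactified space of Cayley octads. So either the theorem's ``is the intersection of three quadrics'' must be read in the looser sense that accommodates the twisted-cubic degeneration (which is, in effect, how Dolgachev--Ortland set things up), or your converse needs an additional hypothesis excluding that case, or a separate argument handling it. As written, the dimension-count heuristic you invoke does not address it, since the twisted-cubic locus has codimension $4$ and therefore sits inside the codimension-$3$ locus of self-associated octads as a proper subvariety---genericity arguments prove statements about the generic point but not about every octad in linearly general position. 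Your closing suggestion to argue via irreducibility and the Cremona action is plausible but is only a sketch and would need the same care at this boundary stratum.
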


The Cremona action on Cayley octads was known classically 
as the {\em bifid substitution}, a term coined by Arthur Cayley himself.
We can regard this as a combinatorial rule that permutes and scales the
$28$ entries of the $8 {\times} 8$ bitangent matrix:

\begin{Cor} 
\label{cor:bifid}
The entries of the two bitangent matrices
$O M_{1234} O^T = (b_{ij})$ and $O' M'_{1234} O'^T = (b'_{ij})$
are related by non-zero scalars in the field $K$ as follows:
\[ \hbox{The linear form \ }
 b'_{ij} \text{\ \ is a scalar multiple of } \left\{
   \begin{array}{ll}
b_{kl} & \text{\rm if }\{i,j,k,l\}=\{1,2,3,4\}, \\
b_{ij} & \text{\rm if }|\{i,j\}\cap\{1,2,3,4\}|=1, \\
b_{kl} & \text{\rm if }\{i,j,k,l\}=\{5,6,7,8\}.
  \end{array}
\right.
\]
\end{Cor}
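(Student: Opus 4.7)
The plan is to work in the coordinates where $O_1,\dots,O_4 = e_1,\dots,e_4$, so the original Cayley octad becomes $e_1,\dots,e_4,P_5,\dots,P_8$ and the new octad (of $M'_{1234}$) is $e_1,\dots,e_4,Q_5,\dots,Q_8$ with $Q_j = P_j^{-1}$ componentwise, as in the proof of Theorem \ref{Thm:BifidSubstitutions}. Write $p_{j,k}$ for the $k$-th coordinate of $P_j$. I treat the three cases of the corollary in turn.

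Case 1, with $\{i,j\}\subset\{1,2,3,4\}$, is immediate: $b'_{ij} = e_i^T M'_{1234} e_j$ is the $(i,j)$-entry of $M'_{1234}$, which by construction equals $b_{kl}$ for $\{i,j,k,l\}=\{1,2,3,4\}$.

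For Case 2, WLOG take $i=1$ and $j\in\{5,6,7,8\}$. Using the first row of $M'_{1234}$ one has $b'_{1j} = e_1^T M'_{1234} Q_j^T = b_{34}/p_{j,2}+b_{24}/p_{j,3}+b_{23}/p_{j,4}$. The Cayley octad relation $P_j^T M_{1234} P_j = \sum_{a<b} b_{ab}\,p_{j,a}p_{j,b} = 0$ splits into terms containing $p_{j,1}$, whose sum is $p_{j,1}(b_{12}p_{j,2}+b_{13}p_{j,3}+b_{14}p_{j,4}) = p_{j,1}\,b_{1j}$, and terms not containing $p_{j,1}$, which equal $p_{j,2}p_{j,3}p_{j,4}\,b'_{1j}$. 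Hence $b'_{1j} = -\frac{p_{j,1}}{p_{j,2}p_{j,3}p_{j,4}}\,b_{1j}$, a nonzero scalar multiple.

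Case 3, with $\{i,j\}\subset\{5,6,7,8\}$, is the main obstacle. First I would derive the companion identity $u^T M'_{1234} u = u_1u_2u_3u_4\cdot(u^{-1})^T M_{1234} u^{-1}$ by substituting $u\mapsto u^{-1}$ in the identity from the proof of Theorem \ref{Thm:BifidSubstitutions}. Then, substituting $u = sQ_i+tQ_j$, the LHS collapses to $2st\,b'_{ij}$ since $Q_i,Q_j$ lie on the octad of $M'_{1234}$, and the $st$-coefficient on the RHS, after clearing denominators by $\prod_k p_{i,k}p_{j,k}$, expresses $b'_{ij}$ as a specific linear combination $\sum_{\{a,b\}\subset\{1,2,3,4\}} b_{ab}\cdot(\text{polynomial in }p_{i,\bullet},p_{j,\bullet})$. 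To conclude that this combination is a nonzero scalar multiple of $b_{kl} = P_k M_{1234} P_l^T$ with $\{k,l\}=\{5,6,7,8\}\setminus\{i,j\}$, I would invoke the four Cayley octad relations $P_m^T M_{1234} P_m = 0$ for $m=5,6,7,8$; these, together with the at most three intrinsic linear dependences in $K[x,y,z]_1$, exhaust the relations among the six forms $b_{ab}$ with indices in $\{1,2,3,4\}$. The hard part is this algebraic identification, which is ultimately a manifestation of the self-associated structure of the Cayley octad (Theorem \ref{thm:coble}); alternatively, one can invoke the bifid substitution action on bitangent pairs described by Coble and Dolgachev--Ortland to read off the pairing $\{i,j\}\leftrightarrow\{k,l\}$ directly from the Cremona action on the octad.
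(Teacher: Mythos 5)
Your Cases 1 and 2 are correct and match the paper's argument exactly: Case 1 is the definition of $M'_{1234}$, and Case 2 recovers the paper's identity $\alpha_5 b_{15} + \beta_5\gamma_5\delta_5 b'_{15} = 0$ (in your notation, $p_{j,1}b_{1j} + p_{j,2}p_{j,3}p_{j,4}b'_{1j} = 0$) from the octad relation $O_5 M_{1234} O_5^T = 0$.

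Case 3 has a genuine gap. Your computational plan (substitute $u = sQ_i + tQ_j$, collect the $st$-coefficient, clear denominators) does produce an expression for $b'_{ij}$ as a $K$-linear combination of the six forms $b_{ab}$ with $\{a,b\}\subset\{1,2,3,4\}$, but you never carry out the ``algebraic identification'' of that combination with a scalar multiple of $b_{kl}$, $\{k,l\} = \{5,6,7,8\}\setminus\{i,j\}$; you explicitly label this the hard part and punt. Your proposed fallback --- ``invoke the bifid substitution action on bitangent pairs described by Coble and Dolgachev--Ortland to read off the pairing'' --- is circular: the explicit index permutation you would be invoking is precisely the content of the corollary being proved.

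The paper's route to Case 3 avoids all computation. Since $M'_{1234}$ is another symmetric determinantal representation of $f$, the $b'_{ij}$ are (up to scalar) the same $28$ bitangents as the $b_{kl}$. Cases 1 and 2 already identify $22$ of the $28$ proportionalities, so the remaining six forms $b'_{ij}$ with $\{i,j\}\subset\{5,6,7,8\}$ must biject with the six $b_{kl}$ with $\{k,l\}\subset\{5,6,7,8\}$. One then only needs to rule out the pairings $\{k,l\}\cap\{i,j\}\neq\emptyset$, which the paper does via Remark~\ref{rem:TrianglesAndSquares}: for instance $b'_{56}\propto b_{56}$ would force the syzygetic quadruple $\{b'_{12},b'_{25},b'_{56},b'_{16}\}$ to coincide with the azygetic set $\{b_{34},b_{25},b_{56},b_{16}\}$, a contradiction, and similarly $b'_{56}\propto b_{57}$ would equate an azygetic triple with a syzygetic one. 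You should replace your Case 3 with this counting-plus-exclusion argument, or else actually finish the algebraic identification you sketched.
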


\begin{proof}
The first case is the definition of $M'_{1234}$.
For the second case we note that
\begin{equation}
\label{twoeqns}
\begin{matrix}
& b_{15} & = & O_1 M_{1234} O^T_5 & = & \beta_5b_{12}+\gamma_5b_{13}+\delta_5b_{14} \\
{\rm and}
& b'_{15}& = & O_1^{\prime}M'_{1234} O^{' T}_5 & = & \beta_5^{-1}b_{34}+\gamma_5^{-1}b_{24}+\delta_5^{-1}b_{23},
\end{matrix}
\end{equation}
by Proposition~\ref{Prop:OctadToBitangents}.
The identity $\,O_5 M_{1234} O_5^T= 0$, when combined with (\ref{twoeqns}),
translates into
$\,\alpha_5b_{15}+\beta_5\gamma_5\delta_5b'_{15}  = 0$, and hence
$\, b_{15}' = - \alpha_5 \beta_5^{-1} \gamma_5^{-1} \delta_5^{-1} b_{15}$.
For the last case we consider any pair $\{i,j\} \subset \{5,6,7,8\}$.
We know that  $b'_{ij} = \nu  b_{kl}$, for some $\nu \in K^*$ and 
$\{k,l\}\subset\{5,6,7,8\}$, by the previous two cases. 
We must exclude the possibility $\{k,l\} \cap \{i,j\} \neq \emptyset$.
After relabeling this would mean $b'_{56}  = \nu b_{56}$
or  $b'_{56} = \nu b_{57}$.
If  $b'_{56}  = \nu b_{56}$ then
the lines $\{b'_{12},b'_{25},b'_{56},b'_{16}\}$
and $\{b_{34},b_{25},b_{56},b_{16}\}$ coincide.
This is impossible because the left quadruple  is syzygetic while the right quadruple is not,
by Remark \ref{rem:TrianglesAndSquares}.
Likewise, $b'_{56} = \nu b_{57}$ would imply that
the  azygetic triple $\{b'_{15},b'_{56},b'_{16}\}$ corresponds to
  the syzygetic triple $\{b_{15},b_{57},b_{16}\}$. 
  \end{proof}

\begin{Remark}\label{rem:HesseTable}
The $35$ bifid substitutions of the Cayley octad are indexed by partitions 
 of $[8] = \{1,2,\ldots,8\}$ into pairs of $4$-sets. They are discussed 
in modern language in \cite[Prop.4, page 172]{DO}.
Each bifid substitution determines a
permutation of the set $ \binom{[8]}{2} = \bigl\{ \{i,j\} : 1{\leq} i {<} j {\leq} 8 \bigr\}$.
For instance,  the bifid partition $1234|5678$
determines the permutation 
in Corollary~\ref{cor:bifid}. Hesse   \cite[page 318]{049.1317cj} wrote these
$35$ permutations of $\binom{[8]}{2}$ explicitly in a table of format $35 {\times} 28$.
Hesse's remarkable table is a combinatorial 
realization of the Galois group (\ref{intro:galois}). Namely, 
$W(E_7)/\{\pm 1\}$ is the subgroup of column permutations that fixes 
the rows.
\end{Remark}

  We conclude this section with a remark on the real case. Suppose
  that $f$ is given by a real symmetric determinantal representation
(\ref{intro:LMR}),
  i.e.~$f=\det(M)$ where $M=xA+yB+zC$ and $A,B,C$ are real symmetric
  $4\times 4$-matrices. By \cite[\S 0]{MR1225986}, such a representation exists for every smooth real quartic $f$. 
  Then the quadrics $uAu^T, uBu^T, uCu^T \in
  K[u_0,u_1,u_2,u_3]_2$ defining the Cayley octad are real, so that
  the points $O_1,\dots,O_8$ are either real or come in conjugate
  pairs.

\begin{Cor}\label{cor:realoctad}
 Let $M=xA+yB+zC$ be a real symmetric matrix
  representation of $f$ with Cayley octad $O_1, \hdots, O_8$. Then the
  bitangent $O_i^TMO_j$ is defined over $\R$ if and only if $O_i$ and $O_j$ are
either  real or form a conjugate pair, $O_i = \overline{O_j}$. 
\end{Cor}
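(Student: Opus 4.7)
The plan is to use the Galois action of complex conjugation on both sides of the correspondence of Proposition~\ref{Prop:OctadToBitangents}, and then match real bitangents with orbits of size $\le 2$.

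Since $A,B,C$ are real, the three quadratic forms $uAu^T, uBu^T, uCu^T$ defining the Cayley octad have real coefficients. Complex conjugation therefore sends any solution of \eqref{threequadrics} to another solution; this yields an involution $\sigma$ on the index set $\{1,\dots,8\}$ characterized by $\overline{O_i} = O_{\sigma(i)}$ (up to the overall scaling of the projective coordinates, which I fix once and for all). The fixed points of $\sigma$ correspond to the real points of the octad, and the $2$-cycles of $\sigma$ correspond to the conjugate pairs.

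Next I would transport this action to the bitangents via the formula $b_{ij} = O_i M O_j^T$. Because $M$ is real,
\[
\overline{b_{ij}} \;=\; \overline{O_i}\,M\,\overline{O_j}^T \;=\; O_{\sigma(i)}\,M\,O_{\sigma(j)}^T \;=\; b_{\sigma(i)\sigma(j)},
\]
so conjugation of linear forms corresponds to the induced action of $\sigma$ on the unordered pairs $\{i,j\}$. The bitangent (as a line, i.e.\ its defining form up to scalar) is defined over $\R$ iff $\overline{b_{ij}}$ is a nonzero scalar multiple of $b_{ij}$. By Proposition~\ref{Prop:OctadToBitangents} the $28$ linear forms $b_{ij}$ represent $28$ distinct lines, hence $b_{\sigma(i)\sigma(j)}$ is proportional to $b_{ij}$ if and only if $\{\sigma(i),\sigma(j)\} = \{i,j\}$.

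For $i\neq j$, the equality $\{\sigma(i),\sigma(j)\} = \{i,j\}$ splits into exactly the two cases claimed: either $\sigma$ fixes both $i$ and $j$, meaning $O_i$ and $O_j$ are both real, or $\sigma$ swaps them, meaning $O_i = \overline{O_j}$. This is the asserted equivalence. The only delicate point, which I would flag as the potential obstacle, is confirming that the $28$ forms $b_{ij}$ really are pairwise non-proportional as lines in $\P^2$; this is precisely the content of Proposition~\ref{Prop:OctadToBitangents} together with the fact that a smooth plane quartic has exactly $28$ distinct bitangents, so no collapsing occurs and the bijection $\{i,j\}\leftrightarrow b_{ij}$ is legitimate.
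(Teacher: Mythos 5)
Your argument is correct, and it is the natural unpacking of the reasoning the paper treats as immediate: the preceding paragraph already observes that $A,B,C$ real forces the octad to be stable under conjugation, and the corollary is stated without further proof. Your explicit use of the involution $\sigma$, the identity $\overline{b_{ij}} = b_{\sigma(i)\sigma(j)}$ (up to scalars from the choice of representatives), and the injectivity of $\{i,j\}\mapsto b_{ij}$ from Proposition~\ref{Prop:OctadToBitangents} together with Pl\"ucker's count of $28$ distinct bitangents, fills in exactly the details the authors leave implicit.
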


From the possible numbers of real octad points we can infer the 
numbers of real bitangents stated in Table~\ref{table:sixtypes}.
 If $2k$ of the eight points are real, then
there are $4-k$ complex conjugate pairs, giving $\binom{2k}{2} + 4-k =
2k^2-2k+4$ real bitangents. 
 
\section{Spectrahedral Representations of Vinnikov Quartics}

The symmetric 
determinantal representations $f=\det(M)$ of a ternary quartic
$f \in \Q[x,y,z]$ are grouped into $36$ 
 orbits under the action of $\GL_4(\C)$ given by $M\mapsto
T^TMT$. The algorithms in Sections \ref{sec:LMR} and \ref{sec:octad}
construct representatives for all $36$ orbits.
If we represent each orbit by its
$8 {\times} 8$-bitangent matrix  (\ref{BitangentMatrix}), 
then this serves
as a classifier for the $36$ orbits. Suppose 
we are given any other symmetric linear matrix representation
$M=xA+yB+zC$ of the same quartic $f$, and our
task is to identify in which of the $36$ orbits it lies. We do this
by computing the Cayley octad $O$ of $M$ and the resulting 
bitangent matrix $OM O^T$. That $8 {\times} 8$-matrix can be located in
our list of $36$ bitangent matrices by comparing
principal minors of size $3 {\times} 3$. These minors are
products of azygetic triples of bitangents, and they uniquely
identify the orbit since there are $2016 = 36 \cdot 56$   azygetic triples.

We now address the problem of finding matrices
$A, B$ and $C$ whose entries are real numbers. 
Theorem  \ref{thm:sixtypes} shows that this
 is not a trivial matter because
none of the $36$ bitangent matrices in
(\ref{BitangentMatrix}) has only real entries, unless the
 curve $\mathcal{V}_\R(f)$ consists of four ovals
(as in Figure \ref{fig:edge}).
We discuss the case
when the curve is a {\em Vinnikov quartic}, which means that
$\mathcal{V}_\R(f)$ consists of two nested ovals.

\begin{figure}
\includegraphics[width=7.4cm]{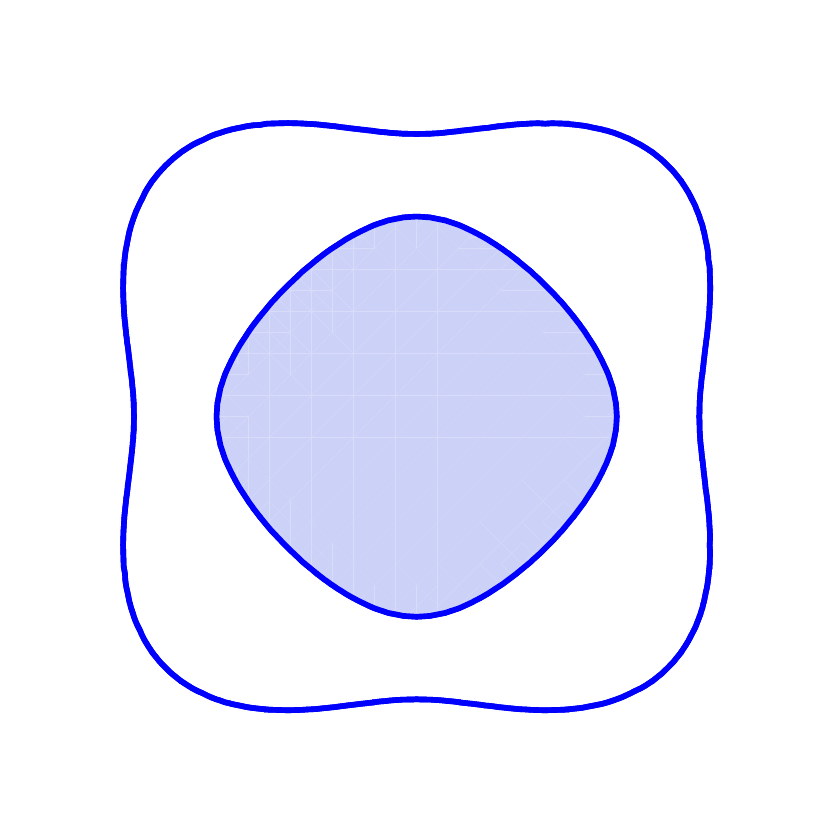} 
\vskip -0.9cm
\caption{The Vinnikov quartic in Example \ref{ex:vinnikov}.}
\label{fig:vinnikov}
\end{figure}

As shown in \cite{MR2292953}, the region bounded by the inner oval
corresponds exactly to 
$$ \bigl\{ (x,y,z) \in \R^3 \,\,:\,\,
x A \,+ \,y B \,+ \,z C \,\,\, \text{is positive definite} \bigr\} ,$$
a convex cone. This means that the inner oval is a {\em
  spectrahedron}.  The study of such {\em spectrahedral
  representations} is of considerable interest in convex optimization.
Recent work by Henrion \cite{Hen} underscores the difficulty of this
problem for curves of genus $g \geq 2$, and in the last two paragraphs
of \cite[\S 1.2]{Hen}, he asks for the development of a practical
implementation.  This section constitutes a definitive computer
algebra solution to Henrion's problem for smooth quartic curves.

\begin{Example} 
\label{ex:vinnikov}
 The following smooth quartic is a Vinnikov curve:
$$
f(x,y,z) \quad = \quad  2x^4 + y^4 + z^4-3 x^2 y^2-3 x^2 z^2+y^2 z^2.
$$
Running the algorithm in Section 2, we find that the coefficients of the
 $28$ bitangents are expressed in radicals over $\Q$. However, only
 four of the bitangents are real. 
Using Theorem \ref{thm:IDR} below, we conclude that 
there exists a real matrix representation \ref{intro:LMR}
with entries expressed in radicals over $\Q$. One such representation~is
\begin{equation}
\label{ex:hv}
f(x,y,z) \quad = \quad
{\rm det} \begin{pmatrix}
                       ux + y     &  0  &      a z &      b z \\
                          0   &     u x  - y &       cz &       d z \\
                        a z &     c  z  &   x + y  &    0  \\
                        b z  &    d  z  &     0   &   x - y 
 \end{pmatrix} \qquad \qquad \text{with} 
 \end{equation}
 $$  \begin{matrix} a &=&  -0.57464203209296160548032752478263071485849363449367...,\\
b &=&   1.03492595196395554058118944258225904539129257996969..., \\
c &=&   0.69970597091301262923557093892256027951096114611925..., \\
d &=&   0.4800486503802432010856027835498806214572648351951..., \\
u &=& \, \sqrt{2} \,\,\, =   \,\, 1.4142135623730950488016887242096980785696718....
\end{matrix} 
$$
The expression in radicals is given by the following maximal ideal in $\Q[a,b,c,d,u]$:
$$ \begin{matrix} \bigl\langle
u^2-2 \,,\,\,
256 d^8 - 384 d^6 u{+}256 d^6{-}  384 d^4 u{+}672 d^4{-}336 d^2 u{+}448 d^2{-}84 u{+}121,  \\
\,\,\,\,\,\,\,
23 c + 7584 d^7 u{+}10688 d^7{-}5872 d^5 u{-}8384 d^5{+}1806 d^3 u{+}2452 d^3{-}181 d u{-}307 d, \\
\,\,\,\,\,
23 b + 5760 d^7 u{+}8192 d^7{-}4688 d^5 u{-}6512 d^5{+}1452 d^3 u{+}2200 d^3{-}212 d u{-}232 d, \\
\,\,\,
23 a - 1440 d^7 u{-}2048 d^7{+}1632 d^5 u{+}2272 d^5{-}570 d^3 u{-}872 d^3{+}99 d u{+}81d \,
\bigr\rangle.
\end{matrix}
$$
A picture of the curve $\mathcal{V}_\R(f)$ in the affine plane $\{x=1\}$
is shown in Figure \ref{fig:vinnikov}. \qed
\end{Example}

The objective of this section is to establish the following algorithmic result:

\begin{Thm} \label{thm:IDR}
Let $f \in \Q[x,y,z]$ be a quartic
whose curve $\mathcal{V}_\C(f)$ is smooth. 
Suppose $f(x,0,0) = x^4$ and $f(x,y,0)$ is squarefree,
and let $K$ be
the splitting field for its $28$ bitangents. 
Then we can compute a determinantal representation
\begin{equation}
\label{eq:IDR}
f(x,y,z)  \,\, = \,\, {\rm det} ( x I + y D + z R) 
\end{equation}
where $I$ is the identity matrix,
$D$ is a diagonal matrix,
$R$ is a symmetric matrix, and
the entries of $D$ and $R$ are expressed in radicals over $K$.
Moreover, there exist such matrices $D$ and $R$ with real entries if
and only if $\mathcal{V}_\R(f)$ is a Vinnikov curve containing the point
$(1:0:0)$ inside the inner oval.
\end{Thm}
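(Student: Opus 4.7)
The plan is to start from a symmetric determinantal representation $f = \det(xA + yB + zC)$ over $K$ produced by Section 3 and reduce it to the desired form by two successive congruence transformations: first a normalization taking $A$ to $I$, then a complex orthogonal diagonalization of the resulting $y$-coefficient. Both steps require only square-root extractions, so all final entries lie in a radical extension of $K$.

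The hypothesis $f(x,0,0) = x^4$ forces $\det(A) = 1$, so the nondegenerate complex symmetric bilinear form $u \mapsto u^T A u$ admits a basis in which its Gram matrix is $I$; equivalently, $A = U^T U$ for some invertible $U$, which can be computed by an iterative diagonalization procedure requiring only square roots over $K$. Replacing $M$ by $U^{-T} M U^{-1}$ yields $f = \det(xI + y B_1 + z C_1)$ with $B_1, C_1$ symmetric. Since $f(x,y,0) = \det(xI + y B_1) = \prod_i (x + \lambda_i y)$ is squarefree, the four eigenvalues $\lambda_i$ of $B_1$ are distinct and algebraic of degree at most four over $K$.

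To diagonalize $B_1$ by a complex orthogonal transformation, let $V$ be the invertible matrix whose columns are eigenvectors of $B_1$. From $B_1 = B_1^T$ and $B_1 V = V D$, with $D = \mathrm{diag}(\lambda_1, \ldots, \lambda_4)$, one checks that $V^T V$ commutes with $D$, and distinctness of the $\lambda_i$ forces $V^T V$ to be diagonal. Since $V$ is invertible, every entry $v_i^T v_i$ is nonzero, and rescaling the $i$-th column by $\sqrt{v_i^T v_i}$ produces an orthogonal matrix $Q$ with $Q^T B_1 Q = D$. Setting $R := Q^T C_1 Q$ then yields $f = \det(xI + yD + zR)$ with all entries lying in a radical extension of $K$.

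For the reality assertion, the \emph{only if} direction is immediate from Helton--Vinnikov \cite{MR2292953}: if $D, R$ are real symmetric, then $xI + yD + zR$ is positive definite at $(1:0:0)$, so $\mathcal{V}_\R(f)$ must be a Vinnikov curve and the connected positive definite locus containing $(1:0:0)$ is the interior of its inner oval. Conversely, if $\mathcal{V}_\R(f)$ is Vinnikov with $(1:0:0)$ inside the inner oval, Helton--Vinnikov provides a real symmetric representation with leading matrix $A'$ positive definite (and determinant $1$ after rescaling); the factorization $A' = U^T U$ is then a real Cholesky decomposition, and the subsequent $B_1'$ is real symmetric with four distinct real eigenvalues, whose eigenvectors satisfy $v_i^T v_i > 0$, so the orthogonal diagonalization proceeds entirely over $\R$. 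The main obstacle is the nonisotropy step $v_i^T v_i \neq 0$ in the complex case: it could fail for a general symmetric matrix, but the squarefreeness hypothesis on $f(x,y,0)$ forces $V^T V$ to be diagonal and invertible, which is precisely what makes the algorithm go through.
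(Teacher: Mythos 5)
Your constructive part is in substance the same simultaneous-diagonalization argument the paper uses; you merely reorganize it as ``whiten $A$ to $I$ via $A = U^T U$, then complex-orthogonally diagonalize $B_1 = U^{-T} B U^{-1}$,'' whereas the paper passes directly through the eigenvectors of $A^{-1}B$ and shows $U^T A U$ and $U^T B U$ are simultaneously diagonal. These are the same computation in a slightly different order, both requiring the same degree-four extension for the eigenvalues and the same square-root extractions, and both hinging on the distinctness of the eigenvalues (equivalently, squarefreeness of $f(x,y,0)$) to force $V^T V$ diagonal and hence to guarantee the nonisotropy $v_i^T v_i \neq 0$. Your remark at the end correctly identifies this as the one place where the algorithm could break over $\C$, and correctly attributes its success to the squarefreeness hypothesis.

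Where you genuinely diverge from the paper is in the ``if'' direction of the reality claim. The paper argues indirectly: it counts the $24\cdot 8 \cdot 36 = 6912$ representations of the form $xI + yD + zR$, observes that within each of the $36$ conjugation orbits they differ only by signed permutations (hence are all real or all non-real), and concludes that running the algorithm on all $36$ orbit representatives from Section~3 must produce Vinnikov's real solution. You instead apply the whitening-and-orthogonal-diagonalization pipeline directly to the Helton--Vinnikov real representation, using that $(1{:}0{:}0)$ inside the inner oval forces $A'$ to be (after an overall sign) positive definite, so that the Cholesky step and the subsequent spectral decomposition of the real symmetric $B_1'$ are entirely real. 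This is cleaner and more self-contained as a proof of existence of a real $(D,R)$. One small caveat worth making explicit: the theorem's phrase ``such matrices $D$ and $R$ with real entries'' is most naturally read as also retaining the radicals-over-$K$ property, and your direct Helton--Vinnikov argument by itself does not show the real $(D,R)$ lies in a radical extension of $K$ (the Helton--Vinnikov matrices are a priori transcendental). To close this, you need the paper's observation that the real representation sits in one of the $36$ orbits computed over $K$, and that all solutions within an orbit are signed permutations of one another, hence your $K$-radical representative from Section~3 inherits reality. With that one-line addition, your proof is complete and slightly tighter than the paper's.
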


The hypotheses in Theorem \ref{thm:IDR} impose no 
loss of generality. Any smooth quartic will satisfy them
after a linear change of coordinates
$(x:y:z)$ in~$\P^2$.

\begin{proof}
Using the method in Section 2, we find a first representation
$f(x,y,z) = {\rm det}(xA + yB + zC) $ over the field $K$.
However, the resulting matrices $A,B,C$ might have non-real entries.
The matrix $A$ is invertible because we have assumed
${\rm det}(xA) = f(x,0,0) =  x^4$, 
which implies $ {\rm det}(A) = 1$.

The binary form $\,f(x,y,0) = {\rm det}(xA+ yB)$
is squarefree. That assumption guarantees that the $4 {\times} 4$-matrix
 $A^{-1} B$ has four distinct complex eigenvalues.
Since its entries are in $K$, 
its four eigenvalues 
lie in a radical extension field $L$ over $K$.
By choosing a suitable basis of eigenvectors, we
find a matrix $U\in\GL_4(L)$ such that $U^{-1}A^{-1} B U$ is a diagonal matrix
$D_1 = {\rm diag}(\lambda_1,\lambda_2, \lambda_3, \lambda_4)$ over
the field~$L$.

We claim that $D_2 = U^T A U $ and $D_3 = U^T B U$ are diagonal matrices.
For each column $u_i$ of $U$ we have
$\,A^{-1} B u_i = \lambda_i u_i\,$, so $B u_i = \lambda_i A u_i $.
For $1 \leq i < j \leq 4$ this implies  $\,u_j^T B u_i = \lambda_i u_j^T A u_i \,$
and, by switching indices, we get $\,u_i^T B u_j = \lambda_j u_i^T A u_j $.
Since $B$ is symmetric, the difference of the last two expressions
is zero, and we conclude $\,(\lambda_i - \lambda_j) \cdot u_i^T A u_j \,=\,0$.
By assumption, we have $\lambda_i \not= \lambda_j$ and therefore $\,u_i^T A u_j \,= 0\,$
and $\,u_i^T B u_j = 0$.
This means that $D_2$ and $D_3$ are diagonal.

Let $D_4$ be the diagonal matrix whose entries are the reciprocals of
the square roots of the entries of $D_2$. These entries are also expressed
in radicals over $K$. Then
$\,D_4 D_2 D_4 = I\,$ is the identity matrix,
$\,D_4 D_3 D_4 = D\,$ is also diagonal, and
$$\, D_4 U^T M U D_4 \,\,= \,\, x I + y D + z R $$
is the real symmetric matrix representation required in (\ref{eq:IDR}).

In order for the entries of $D$ and $R$ to be real numbers, it is necessary
(by \cite{MR2292953}) that  $\mathcal{V}_\R(f)$ be a Vinnikov curve.
We now assume that this is the case.
The existence of a real representation (\ref{eq:IDR})  is due to Vinnikov
\cite[\S 0]{MR1225986}.
A transcendental formula for the matrix entries of $D$ and $R$ in terms of theta functions is presented
in equations (4.2) and (4.3) of \cite[\S 4]{MR2292953}.
We need to show how our algebraic construction above can be used
 to compute Vinnikov's matrices $D$ and~$R$.

 Given a quartic $f \in \Q[x,y,z]$ with leading term $x^4$, the
 identity $(\ref{eq:IDR})$ translates into a system of $14$ polynomial
 equations in $14$ unknowns, namely the four entries of $D$ and the
 ten entries of $R$. For an illustration of how to solve them see Example \ref{ex:6912}.
  We claim that these equations have at most $24
 \cdot 8 \cdot 36 = 6912$ complex solutions and all solutions are
 expressed in radicals over $K$.  Indeed, there are $36$ conjugation
 orbits, and per orbit we have the freedom to transform (\ref{eq:IDR})
 by a matrix $T$ such that $T^T T = I$ and $T^T D T $ is
 diagonal.  Since the entries entries of $D$ are distinct,
 these constraints imply that $T$ is a permutation matrix
 times a diagonal matrix with entries $\pm 1$.   There
 are $24 \cdot 16$ possible choices for $T$, but $T $ and $-T$ yield
 the same triple $(I,D,R)$, so the number of solutions per
 orbit is $24 \cdot 8$.

 We conclude that, for each of the $36$ orbits, either all
 representations (\ref{eq:IDR}) are real or none of them is.  Hence,
 by applying this method to all $36$ inequivalent symmetric linear
 determinantal representations constructed in Section 3, we are
 guaranteed to find Vinnikov's real matrices $D$ and $R$. See also
 \cite[Section 2]{us} for additional examples and a more detailed
 discussion.
\end{proof}

The above argument for the simultaneous diagonalizability of
 $A$ and $B$ is taken from Greub's linear algebra text book \cite{MR0369382}.
 We could also handle the exceptional case when
$A^{-1} B$ does not have four distinct eigenvalues.
Even in that case  there exists a matrix $U$ in radicals over $K$ such that
$U^T A U$ and $U^T B U$ are diagonal, but the construction of $U$ is
more difficult. The details are found  in \cite[\S IX.3]{MR0369382}.

\begin{Cor}
Every smooth Vinnikov curve
has a real determinantal representation (\ref{intro:LMR})
in radicals over the splitting field $K$ of its $28$ bitangents.
\end{Cor}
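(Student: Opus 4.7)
The plan is to reduce to the situation covered by Theorem~\ref{thm:IDR} via a rational projective change of coordinates on $\P^2$. Given a smooth Vinnikov quartic $f \in \Q[x,y,z]$, I will exhibit $T \in \GL_3(\Q)$ so that, after rescaling, $f'(x,y,z) := f(T^{-1}(x,y,z))$ satisfies all three hypotheses of that theorem: $f'(x,0,0) = x^4$, $f'(x,y,0)$ squarefree, and $(1{:}0{:}0)$ in the interior of the inner oval of $\mathcal{V}_\R(f')$.

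To construct $T$, first choose a rational point $p \in \P^2(\Q)$ inside the region bounded by the inner oval of $\mathcal{V}_\R(f)$. Such a $p$ exists by density of $\Q$-points in the nonempty Euclidean-open interior, and it automatically lies off the curve since the interior of the inner oval is disjoint from $\mathcal{V}_\R(f)$. Next choose a rational line $L$ missing $p$ and meeting $\mathcal{V}_\C(f)$ in four distinct points; transversality is a Zariski-open condition in the space of lines, and so holds for generic rational $L$. Pick any $T \in \GL_3(\Q)$ sending $p$ to $(1{:}0{:}0)$ and $L$ to $\{z=0\}$. After rescaling so that the coefficient of $x^4$ in $f'$ equals $1$, the three hypotheses of Theorem~\ref{thm:IDR} all hold for $f'$. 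Because $T$ is rational, the induced action on lines sends the $28$ bitangents of $\mathcal{V}_\C(f)$ bijectively onto those of $\mathcal{V}_\C(f')$, so the two curves have the same bitangent splitting field~$K$.

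Applying Theorem~\ref{thm:IDR} to $f'$ now produces a real determinantal representation $f'(x,y,z) = \det(xI + yD + zR)$ with $D$ diagonal, $R$ symmetric, and entries in radicals over $K$. Substituting $T$ back into the variables yields $f(x,y,z) = \det(xA + yB + zC)$, where $A, B, C$ are fixed $\Q$-linear combinations of $I, D, R$; the matrices remain real symmetric with entries in radicals over $K$, as required. The one point that demands care is arithmetic rather than algebraic, namely simultaneously realizing the three coordinate-change conditions over $\Q$; this was addressed above using density of $\Q$-points in real open sets together with the genericity of the transversality condition on $L$. I do not anticipate any deeper obstacle, since Theorem~\ref{thm:IDR} already does the heavy lifting of diagonalizing $A^{-1}B$ in radicals and certifying reality.
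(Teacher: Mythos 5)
Your strategy --- reduce to Theorem~\ref{thm:IDR} via a $\Q$-rational change of coordinates, then pull the representation back --- is exactly the route the paper intends; the remark right after the theorem statement says the first two hypotheses hold after a linear change of coordinates, and the corollary follows by also arranging $(1{:}0{:}0)$ inside the inner oval. Your observations that a $\Q$-rational $T$ preserves the bitangent splitting field $K$, and that the conclusion transports back because $A,B,C$ are $\Q$-linear combinations of $I,D,R$, are both correct and worth spelling out.

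However, there is a concrete error in the choice of $L$. You require $L$ to \emph{miss} $p$, and then ask for $T\in\GL_3(\Q)$ with $T(p)=(1{:}0{:}0)$ and $T(L)=\{z=0\}$. But $(1{:}0{:}0)$ \emph{lies on} the line $\{z=0\}$, so any projective transformation carrying $L$ to $\{z=0\}$ and $p$ to $(1{:}0{:}0)$ forces $p\in L$; no such $T$ exists under your hypothesis $p\notin L$. The hypotheses of Theorem~\ref{thm:IDR} are that $(1{:}0{:}0)$ is not on the curve (equivalently $c_{400}\neq 0$, normalized to $f(x,0,0)=x^4$), that the line $\{z=0\}$ meets the curve transversally, and that $(1{:}0{:}0)$ is inside the inner oval --- so the distinguished line \emph{does} pass through the distinguished interior point. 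The fix is to choose $L$ through $p$: since $p$ lies inside both ovals, every line through $p$ meets $\mathcal{V}_\R(f)$ in at least four real points, hence (by B\'ezout) in exactly four; the tangent lines from $p$ form a proper Zariski-closed subset of the $\P^1$ of lines through $p$, so a $\Q$-rational transversal line through $p$ exists. With that one correction your argument closes.
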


We close with the remark that the representation
(\ref{eq:IDR}) generally does not exist over the field $K$ itself
but the passage to a radical extension field is necessary.

\begin{Example} \label{ex:6912}
All $6912$ matrix representations $xI  + yD + zR$
of the Edge quartic $\, E(x,y,z)  = 
25 \cdot (x^4+y^4+z^4)\,-\, 34 \cdot (x^2 y^2+x^2 z^2+y^2 z^2)$
are non-real and have degree~$4$ over $\Q$.
 The entries of $D$ are the four complex zeros
of the irreducible polynomial $\, x^4 - \frac{34}{25} x^2 + 1$.
After fixing $D$, we have $192$ choices for $R$, namely,
selecting one of the $36$ orbits fixes $R$ up to conjugation by
${\rm diag}(\pm 1, \pm 1, \pm 1, \pm 1 )$. 
For the orbit of the matrix
$xA {+} yB {+} zC$ in (\ref{LMRforedge}), our algorithm gives the
representation
\[
\begin{small}
  D \,\, = \,\,
  \begin{pmatrix}
    -\sqrt{21}/5 - 2i/5 & 0 & 0 & 0\\
    0 & \sqrt{21}/5+  2i/5& 0 & 0\\
    0 & 0 &-\sqrt{21}/5 + 2i/5 & 0\\
    0 & 0 & 0 & \sqrt{21}/5 - 2i/5
  \end{pmatrix}
\end{small}
\]
\[
\begin{small}
R \,\,=\,\,
  \begin{pmatrix}
    0 &-\frac{2}{5} (\sqrt{3/7}+i)& -\sqrt{27/35}& 0\\
\! \! -\frac{2}{5} (\sqrt{3/7}+i)& 0 &0 & \sqrt{27/35}\\
   -\sqrt{27/35} & 0 & 0 & \! -\frac{2}{5}(\sqrt{3/7}-i)\! \\
    0& \sqrt{27/35} & -\frac{2}{5} (\sqrt{3/7}-i)& 0
  \end{pmatrix}.
\end{small}
\]
\qed
\end{Example}

\section{Sums of Three Squares and Steiner Complexes}\label{sec:SOS}

Our next goal is to write the given quartic $f$
as the sum of three squares of quadrics.
Such representations (\ref{intro:SOS}) are classified by Gram matrices of rank $ 3$.
A \emph{Gram matrix} for $f$ is a symmetric $6\times 6$ matrix $G$ 
with entries in $\C$ such that 
$$ f  \,\,\,= \,\,\, v^T \cdot G \cdot v \quad \hbox{where} \quad v \,= \, (x^2,y^2,z^2,xy,xz,yz)^T . $$
We can write $\,G = H^T \cdot H$, where $H$ is an $r \times 6$-matrix
and $r = {\rm rank}(G)$.  Then the factorization $\,f\,=\,(Hv)^T \cdot
(Hv)\,$ expresses $f$ as the sum of $r$ squares.

It can be shown that no Gram matrix with $r \leq 2$ exists when $f$ is smooth, and there are
infinitely many for $r \geq 4$.  For $r=3$ their number is $63$ by Theorem
\ref{thm:sixtythree}.

Gram matrices classify the representations (\ref{intro:SOS}):
two distinct representations
$$ f \,\, = \,\, q_1^2+q_2^2+q_3^2 \,\, = \,\, p_1^2+p_2^2+p_3^2 $$ 
correspond to the same Gram
matrix $G$ of rank $3$ if and only if there exists an orthogonal matrix
$T\in{\rm O}_3(\C)$ such that $\, T \cdot (p_1,p_2,p_3)^T=(q_1,q_2,q_3)^T$.
The objective of this section is to present an algorithmic proof 
for the following result.

\begin{Thm}
\label{thm:sixtythree}
Let $f \in \Q[x,y,z]$ be a smooth quartic and
$K$ the splitting field for its $\,28$ bitangents.
Then $f$ has precisely $\,63$ Gram matrices of rank $\,3$,
all of which we compute using rational arithmetic over the field $K$.
\end{Thm}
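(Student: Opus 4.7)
The plan is to establish an explicit bijection between the rank-$3$ Gram matrices of $f$ and the $63$ Steiner complexes of bitangents of $\sV_\C(f)$, and to realize this bijection via linear algebra over $K$ from the bitangent data of Sections \ref{sec:LMR} and \ref{sec:octad}.

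\textbf{Gram matrix to Steiner complex.} A rank-$3$ Gram matrix is equivalent, modulo the $\mathrm{O}_3(\C)$-action, to an orthonormal decomposition $f = q_1^2 + q_2^2 + q_3^2$. Factor
\[
f - q_3^2 \;=\; (q_1 + iq_2)(q_1 - iq_2) \;=\; p\cdot p'
\]
so that on $\sV_\C(f)$, $\div(p) + \div(p') = 2\,\div(q_3)$. A dimension count forces $p$ and $p'$ to be contact conics whose contact divisors partition the eight intersection points of $q_3$ with $\sV_\C(f)$ into two sets of four. Each contact divisor $E_p$ satisfies $2E_p \sim 2K$ on the genus-$3$ curve, so $\eta_p := E_p - K \in \mathrm{Jac}(\sV_\C(f))[2]$; from $E_p + E_{p'} \sim 2K$ one gets $\eta_p = \eta_{p'} =: \eta \neq 0$. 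Since $\mathrm{O}_3(\C)$ preserves $\eta$, it is a well-defined invariant of the Gram matrix. Classically, the $6$ effective divisors in the pencil $|K+\eta|$ that split as $\theta+\theta'$ with both $\theta,\theta'$ odd theta characteristics yield the $6$ pairs of bitangents of the Steiner complex $\sS_\eta$ (cf.\ Remark \ref{rem:HesseTable}).

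\textbf{Steiner complex to Gram matrix, over $K$.} Conversely, given $\sS_\eta$, pick any three pairs $(\ell_i,\ell_i') \in \sS_\eta$, $i=1,2,3$. The reducible conics $p_i := \ell_i\ell_i'$ lie in the $3$-dimensional space $W$ of conics whose contact divisor has class $K+\eta$, and are linearly independent over $K$ (a generic-position check). The syzygetic-quadruple identity $q^2 \equiv \ell_1\ell_2\ell_3\ell_4 \pmod{f}$ of Remark \ref{rem:TrianglesAndSquares} ensures that $f$ lies in the image of the multiplication map $\mathrm{Sym}^2 W \to \C[x,y,z]_4$. Solve the linear system $f = \sum_{i,j} a_{ij}\, p_i p_j$ over $K$ for the symmetric $3\times 3$ matrix $A=(a_{ij})$, and let $C_0$ be the $3\times 6$ matrix whose rows express $p_1,p_2,p_3$ in the monomial basis $(x^2,y^2,z^2,xy,xz,yz)$. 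The rank-$3$ Gram matrix attached to $\sS_\eta$ is
\[
G \;=\; C_0^{T} A \, C_0,
\]
obtained entirely over $K$ with no square-root extraction. Distinct Steiner complexes yield $\mathrm{O}_3$-inequivalent Gram matrices (because their $\eta$-invariants differ). Since $|\mathrm{Jac}(\sV_\C(f))[2] \setminus \{0\}| = 2^{2g}-1 = 63$ for $g=3$, this gives precisely $63$ rank-$3$ Gram matrices.

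\textbf{Main obstacle.} The hardest step is to verify that the above construction actually produces a rank-$3$ Gram matrix for every $\sS_\eta$ (surjectivity) and that this matrix is uniquely determined by $\sS_\eta$, independently from the choice of three pairs. Each of these reduces to a $K$-rational check on the combinatorics of the bitangent matrices: that the space $W$ is precisely $3$-dimensional, that $f \in \mathrm{Sym}^2 W$, and that the resulting $A$ has rank $3$. The key input is the syzygetic identity, which guarantees the factorization of $f$ through $\mathrm{Sym}^2 W$.
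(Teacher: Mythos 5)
Your proposal is correct in outline and arrives at the same $K$-rational linear-algebra computation as the paper, but it routes the conceptual bijection through the Jacobian's $2$-torsion and theta characteristics---precisely the machinery the paper announces it will avoid. The paper says explicitly: ``Instead of appealing to the Jacobian threefold of $f$, as in \cite{PRSS}, we shall identify the $63$ Gram matrices with the $63$ Steiner complexes of bitangents.'' Its actual proof works through the quadratic determinantal representation $f = q_0 q_2 - q_1^2$ (\ref{eq:thirdQMR}), then invokes Proposition \ref{prop:ContactConics} (the pencil $\lambda^T Q \lambda$ contains exactly six bitangent products) and Theorem~\ref{thm:Steiner}~(1) to obtain the Steiner complex, never mentioning divisor classes. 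Your Gram-to-Steiner direction, using $\eta_p = E_p - K \in \mathrm{Jac}[2]$ with $\eta_p = \eta_{p'}$, is a valid classical argument, but it brings in Riemann-Roch and theta-characteristic theory where the paper gets by with Max Noether and Hesse's classical result. Both approaches buy essentially the same algorithm: your $G = C_0^T A\, C_0$, with $A$ solved from $f = \sum a_{ij} p_i p_j$, coincides with the paper's $G = T^T \widetilde G T$ after noting $A = \widetilde C^{-1}$.

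Two minor points worth flagging. First, your description of $W$ as ``the $3$-dimensional space of conics whose contact divisor has class $K+\eta$'' is imprecise: the contact conics for a fixed $\eta$ form a $1$-parameter family (the Veronese conic $\lambda^T Q\lambda$ inside $\P(W)$), and $W$ is their $3$-dimensional \emph{span}. Most elements of $W$, in particular the auxiliary conic $q_{12}$ of a syzygetic quadruple, are not contact conics. Second, your assertion that the syzygetic identity $q_{12}^2 \equiv p_1 p_2 \pmod f$ already places $f$ in $\mathrm{Sym}^2 W$ is incomplete without knowing $q_{12} \in W$; this is exactly what the paper establishes via equation (\ref{eq:Mmess}) in the proofs of Proposition~\ref{prop:ContactConics} and Lemma~\ref{lem:conicsmeet} ($q_{12} = \lambda^T Q\mu$ is a linear combination of $Q_{11}, Q_{12}, Q_{22}$). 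You acknowledge the surjectivity and well-definedness checks as the ``main obstacle''; in the paper these are precisely the content of the cited classical theorems.
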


The fact that $f$ has $63$ Gram matrices of rank $3$ is a known result due to
Coble \cite[Ch.~1, \S14]{Cob}; see also \cite[Prop.~2.1]{PRSS}. Our contribution is 
a new proof that yields a $K$-rational algorithm for computing all rank-3
Gram matrices. Instead of appealing
to the Jacobian threefold of $f$, as in \cite{PRSS}, we shall identify
the $63$ Gram matrices with the $63$
 Steiner complexes of bitangents (see \cite[\S VI]{MR0115124} and
 \cite[\S 6]{Dol}).

 We begin by constructing a representation
 $f=q_1^2+q_2^2+q_3^2$ from any pair of bitangents. Let $\ell,\ell'$
 be distinct bitangents of $f$, and let $p \in \C[x,y,z]_2$ be a
 non-singular quadric passing through the four contact points of
 $\ell\ell'$ with $f$.  By Max Noether's Fundamental Theorem  \cite[\S~5.5]{MR1042981},
 the ideal  $\big\langle \ell \ell', f \bigr\rangle$ contains $p^2$, thus
\begin{equation}\label{eq:firstQMR}
f \,\,= \,\, \ell\ell'u-p^2,
\end{equation}
for some quadric 
$u \in \C[x,y,z]_2$, after rescaling  $p$ by a constant.  Over $\C$, the identity
\eqref{eq:firstQMR} translates directly into one of the form:
\begin{equation}\label{eq:secondQMR}
f \,\, = \,\, \left(\frac{1}{2} \ell\ell'+\frac{1}{2} u\right)^2+\left(\frac{1}{2i}
  \ell\ell' -\frac{1}{2i} u\right)^2+ (i p)^2.
\end{equation}

\begin{Remark}
Just as systems of contact cubics to $\sV_\C(f)$
were behind the formula (\ref{intro:LMR}),
systems of contact conics to $\sV_\C(f)$ are responsible for
the representations (\ref{intro:SOS}). The simplest choice of a contact
conic is a product of two bitangents. 
\end{Remark}

In (\ref{eq:secondQMR}) we wrote $f$ as a sum of three squares
over $\C$. There are $\binom{28}{2}=378$ pairs
$\{\ell,\ell'\}$ of bitangents. We will see 
Theorem \ref{thm:Steiner}
that each pair forms a syzygetic quadruple 
with $5$ other pairs. This yields $378/6=63$ equivalence classes. More importantly,
there is a combinatorial rule for determining these $63$ classes from a 
Cayley octad. This allows us to compute
the $63$ Gram matrices over~$K$.

\smallskip

Equation \eqref{eq:firstQMR} can also be read as a
quadratic determinantal representation 
\begin{equation}
\label{eq:thirdQMR}
f\;=\;\det\left(\begin{matrix} q_0&q_1 \\
    q_1&q_2 \end{matrix} \right)
\end{equation}
with $q_0=\ell\ell'$, $q_1=p$, and $q_2=u$.  This expression 
gives rise to the quadratic system of contact conics 
$\{\lambda_0^2 q_0 +2 \lambda_0\lambda_1 q_1+\lambda_1^2 q_2^2\;:\; \lambda \in \P^1(\C)\}$.  
The implicitization of this quadratic system is a quadratic form on ${\rm span}\{q_0, q_1, q_2\}$. 
With respect to the basis $(q_0,q_1,q_2)$, it is represented by a symmetric $3\times3$ matrix $C$.
Namely, 
$$  \quad \qquad C \,=\,\begin{pmatrix} 0&0&2\\0&-1&0\\2&0&0 \end{pmatrix}
\quad \hbox{and its inverse is} \quad
C^{-1} \, = \, \begin{pmatrix} 0 & 0 &1/2\\ 0 & -1&0\\1/2&0&0\end{pmatrix} \! . $$
The formula (\ref{eq:thirdQMR}) shows that     
$\,f \,=\, q_0 q_2 - q_1^2 \,=\, (q_0,q_1,q_2) \cdot C^{-1} \cdot (q_0,q_1,q_2)^T $.
We now  extend $q_0,q_1,q_2$ to a basis $q = (q_0,q_1,q_2,q_3,q_4, q_5)$ of
$\C[x,y,z]_2$. Let $T$ denote the matrix that takes the monomial basis
$v = (x^2,y^2,z^2,xy,xz,yz)$ to $q$. If $\wt G$ is the $6 \times 6$ matrix with $C^{-1}$ 
in the top left block and zeros elsewhere, then
\begin{equation}
\label{eq:fourthQMR}
f \quad = \quad 
(q_0,q_1,q_2) \cdot C^{-1} \cdot (q_0,q_1,q_2)^T  \quad = \quad
   v^T \cdot T^T \cdot \wt G \cdot T \cdot v.
\end{equation}
Thus, $\,G=T^T\wt G T\,$ is a rank-3 Gram
matrix of $f$. This construction is completely reversible, showing
that every rank-3 Gram matrix of $f$ is obtained in this way.

The key player in the formula (\ref{eq:fourthQMR}) is the quadratic form given by $C$.
From this, one easily gets the Gram matrix $G$.
We shall explain how to find $G$ geometrically from the pair of bitangents $\ell,\ell'$.
The following result is taken from
Salmon \cite{MR0115124}:

\begin{Prop}\label{prop:ContactConics}
  Let $f=\det(Q)$ where $Q$ is a symmetric $2\times 2$-matrix with
  entries in $\C[x,y,z]_2$ as in (\ref{eq:thirdQMR}).  Then $Q$ defines a quadratic
  system of contact conics $\,\lambda^TQ\lambda\,$,
  $\lambda\in \P^1(\C)$, that contains exactly six products of two bitangents.
\end{Prop}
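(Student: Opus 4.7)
The plan is to first verify that every member of the one-parameter family $q_\lambda := \lambda^T Q \lambda = \lambda_0^2 q_0 + 2\lambda_0\lambda_1 q_1 + \lambda_1^2 q_2$ is a contact conic of $\mathcal{V}_\C(f)$, then to count how many such members are reducible, and finally to identify these reducible members with products of bitangents. For the contact property, observe the algebraic identity
\[
q_0 \cdot q_\lambda \,\,=\,\, (\lambda_0 q_0 + \lambda_1 q_1)^2 + \lambda_1^2 (q_0 q_2 - q_1^2) \,\,=\,\, (\lambda_0 q_0 + \lambda_1 q_1)^2 + \lambda_1^2 f,
\]
so that $q_0 \cdot q_\lambda \equiv (\lambda_0 q_0 + \lambda_1 q_1)^2 \pmod{f}$. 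At any point of $\mathcal{V}_\C(f)$ where $q_0$ does not vanish, $q_\lambda$ thus becomes a square in the local ring of the curve, so its intersection multiplicity with $f$ is even. A symmetric identity using $q_2$ in place of $q_0$ handles the points where $q_0$ vanishes (and also the boundary case $\lambda_1 = 0$), showing that $q_\lambda$ is a contact conic of $\mathcal{V}_\C(f)$ for every $\lambda \in \P^1(\C)$.

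Next I would count the reducible members. Writing $q_i = v^T M_i v$ with $v = (x,y,z)^T$ and $M_i$ a symmetric $3 \times 3$ matrix, the conic $q_\lambda$ is represented by the symmetric matrix
\[
N(\lambda) \,\,=\,\, \lambda_0^2 M_0 + 2\lambda_0\lambda_1 M_1 + \lambda_1^2 M_2 .
\]
The conic $q_\lambda$ is singular, hence a product of two (possibly equal) linear forms, precisely when $\det N(\lambda) = 0$. This determinant is a binary sextic form $\Delta(\lambda_0,\lambda_1)$, and one checks it is not identically zero: if it were, every conic in the pencil would be reducible, forcing an identity that collapses the rank of $Q$ and contradicts $\det(Q) = f$ defining a smooth quartic. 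Hence $\Delta$ has exactly six zeros in $\P^1(\C)$, counted with multiplicity.

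At each such $\lambda$ the singular conic $q_\lambda$ factors as $\ell \ell'$ for two linear forms. Since $q_\lambda = \ell\ell'$ is a contact conic, each of the eight intersection points (with multiplicity) of $\ell\ell'$ with $\mathcal{V}_\C(f)$ has even multiplicity. For a smooth quartic $f$ the node $\ell \cap \ell'$ cannot lie on $\mathcal{V}_\C(f)$ (otherwise the curve would acquire an illegitimate singularity at the contact point, as a local computation at the node shows), so the eight intersection points split into the four of $\ell \cap \mathcal{V}_\C(f)$ and the four of $\ell' \cap \mathcal{V}_\C(f)$. Evenness of each multiplicity then forces both $\ell$ and $\ell'$ to be bitangents of $\mathcal{V}_\C(f)$.

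The main obstacle will be justifying that all six roots of $\Delta$ are simple and that the resulting six products of bitangents are genuinely distinct, i.e.~ruling out the degenerate cases where $\ell = \ell'$ (a doubled bitangent) or where the same unordered pair $\{\ell,\ell'\}$ arises from two different $\lambda$'s. This is handled by recalling that $f$ has only finitely many $(28)$ bitangents and $378$ pairs, combined with the fact—developed in the Steiner-complex material that follows the proposition—that each pair of bitangents determines at most one quadratic representation \eqref{eq:thirdQMR}. For a generic (hence smooth) quartic this rigidity makes all six roots simple, and the six products recovered are exactly the six pairs forming the Steiner complex of $\ell, \ell'$.
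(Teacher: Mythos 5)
Your proposal takes essentially the same route as the paper's (sketch of) proof: your identity $q_0\,q_\lambda - (\lambda_0 q_0 + \lambda_1 q_1)^2 = \lambda_1^2 f$ is precisely the $\mu=(1,0)$ specialization of the paper's bilinear identity (\ref{eq:Mmess}), and your count via the binary sextic $\det N(\lambda)$ is the same as the paper's observation that the cubic hypersurface of singular conics pulls back under the quadratic parametrization $\lambda\mapsto\lambda^T Q\lambda$ to degree six. You supply more detail than the paper's sketch on two points — why a singular contact conic must factor into two \emph{bitangents} (ruling out the node lying on the curve), and the simplicity/distinctness of the six parameter values — which the paper glosses over; your resolution of the latter by appeal to the Steiner-complex rigidity is sound but leans on material developed after the proposition, so if you wanted a self-contained argument you would instead note that a repeated root of $\det N(\lambda)$ or a coincidence of unordered pairs $\{\ell,\ell'\}$ would force a degeneracy of the net of conics that is incompatible with $\det Q$ being a smooth quartic.
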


\begin{proof}[Sketch of Proof]
  To see that $\lambda^TQ\lambda\,$
  is a contact conic, note that for any
  $\lambda,\mu \in \C^2$,
 \begin{equation}
 \label{eq:Mmess}
 (\lambda^TQ\lambda)(\mu^TQ\mu) - (\lambda^TQ\mu)^2 \,=\,
  \sum_{i,j,k,l} \lambda_i\lambda_j \mu_k \mu_l ( Q_{ij}Q_{kl} -  Q_{ik}Q_{jl}).
  \end{equation}
  The expression $\,Q_{ij}Q_{kl}-Q_{ik}Q_{jl}\,$ is a multiple of
  $\det(Q)=f$, and hence so is the left hand side of (\ref{eq:Mmess}).
  This shows that $ \lambda^TQ\lambda$ is a contact conic of
  $\sV_\C(f)$.  The set of singular conics is a cubic hypersurface in
  $\C[x,y,z]_2$. As $ \lambda^TQ\lambda$ is quadratic in $\lambda$, we
  see that there are six points $\lambda \in \P^1(\C)$ for which $
  \lambda^TQ\lambda$ is the product of two linear forms. These are
  bitangents of $f$ and therefore $K$-rational.
  \end{proof}
  
\begin{Remark}\label{rem:PSDConic2PSDGram}
 If the Gram matrix $G$ is real, then it is positive (or negative) semidefinite if and only if the quadratic system $\sQ=\{\lambda^T Q\lambda\;|\;\lambda\in\P^1(\C)\}$ does not contain any real conics. For if $G$ is real, we may take a real basis $(q_0',q_1',q_2')$ of ${\rm span}\{q_0,q_1,q_2\}=\ker(G)^\perp$ in $\C[x,y,z]_2$. If $\sQ$ does not contain any real conics, then the matrix $C'$ representing $\sQ$ with respect to the basis $(q_0',q_1',q_2')$ is definite. Using $C'$ instead of $C$ in the above construction, we conclude that $C^{\prime -1}$ is definite and hence $G$ is semidefinite. The converse follows by reversing the argument.
\end{Remark}

We now come to Steiner complexes,
the second topic in the section title.
 
\begin{Thm} \label{thm:Steiner}
Let $\, \mathcal{S} = \bigl\{\{\ell_1,\ell_1'\}, \hdots,\{\ell_6,\ell_6'\}\bigr\} \,$ be six pairs of
  bitangents of a smooth quartic $f \in \Q[x,y,z]$. Then the following 
  three conditions are equivalent:
\begin{enumerate} \item 
The reducible quadrics
  $\ell_1\ell_1', \hdots, \ell_6\ell_6'$ lie in a system of contact conics 
  $\,\lambda^TQ\lambda ,$ $\,\lambda\in\P^1(\C)$, for $Q$ a quadratic
  determinantal representation (\ref{eq:thirdQMR}) of $f$.
\item For each $i\neq j$, the eight contact points
  $\sV_\C(\ell_i\ell_i'\ell_j\ell_j')\cap\sV_\C(f)$ lie on a conic.
\item With indices as in the bitangent matrix 
(\ref{BitangentMatrix}) for a Cayley octad, either
$$ \begin{matrix}
 \mathcal{S} =  \,\bigl\{\{b_{ik},b_{jk}\}\,|\, \{i,j\}=I\text{ \ and\  }k\in I^c\,\bigr\} &
\text{for a $2$-set $I \subset \{1,\ldots, 8\}$,} \\
 \,\,\,\, or \,\,\mathcal{S}  = 
\bigl\{\{b_{ij},b_{kl}\}\,|\, \{i,j,k,l\}=I\text{ or }\{i,j,k,l\}=I^c\bigr\} \! &
\text{for a $4$-set $I \subset \{1,\hdots, 8\}$}.
\end{matrix}
$$
\end{enumerate}
\end{Thm}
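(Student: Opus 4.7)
The plan is to prove the cycle $(1) \Rightarrow (2) \Rightarrow (3) \Rightarrow (1)$, combining the algebraic identity from Proposition~\ref{prop:ContactConics} with the combinatorial structure of the Cayley octad from Section~\ref{sec:octad}. The implication $(1) \Rightarrow (2)$ is immediate from the identity used in the proof sketch of Proposition~\ref{prop:ContactConics}: if $\ell_i\ell_i' = \lambda_i^T Q \lambda_i$ and $\ell_j\ell_j' = \lambda_j^T Q \lambda_j$ both belong to the quadratic system of contact conics, then $(\lambda_i^T Q \lambda_i)(\lambda_j^T Q \lambda_j) - (\lambda_i^T Q \lambda_j)^2 \in \langle f \rangle$, so the conic $\lambda_i^T Q \lambda_j$ cuts out the eight points in $\sV_\C(\ell_i\ell_i'\ell_j\ell_j') \cap \sV_\C(f)$.

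For $(2) \Rightarrow (3)$, condition~(2) says every two-pair subquadruple of $\sS$ is syzygetic in the sense of the Remark after Theorem~\ref{thm:1260}. Labeling bitangents $b_{ij}$ via a fixed Cayley octad as in Proposition~\ref{Prop:OctadToBitangents}, I would classify collections of six disjoint pairs that satisfy all $\binom{6}{2} = 15$ pairwise syzygy conditions. By Remark~\ref{rem:TrianglesAndSquares}, a typical syzygetic quadruple is of type~$\square$, i.e.\ indexed by the edges of a 4-cycle on $\{1, \ldots, 8\}$. A case analysis on how the index sets of two pairs overlap forces $\sS$ either to share a common 2-set $I$ of indices across all pairs (the first family in~(3)) or to partition into pairs supported on a 4-set $I$ and its complement $I^c$ (the second family). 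The tally $\binom{8}{2} + \frac{1}{2}\binom{8}{4} = 28 + 35 = 63$ agrees with the count of Gram matrices in Theorem~\ref{thm:sixtythree}.

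For $(3) \Rightarrow (1)$, I would construct the $2 \times 2$ quadratic determinantal representation $Q$ directly from the Cayley octad of the linear representation $M = xA + yB + zC$ from Section~\ref{sec:octad}. For $|I| = 2$ with $I = \{i,j\}$, after a change of basis sending $O_i, O_j$ to two standard basis vectors of $\C^4$, one forms a suitable Schur complement of $M$ whose entries are quadratic forms with $\det Q$ equal to $f$ up to a scalar; Proposition~\ref{prop:ContactConics} then supplies six bitangent products in $\lambda^T Q \lambda$, and the octad combinatorics confirms they are exactly $\{b_{ik} b_{jk} : k \in I^c\}$. The $|I|=4$ case is handled analogously, exploiting the partition of $\{O_1, \ldots, O_8\}$ into $\{O_i\}_{i \in I}$ and $\{O_i\}_{i \in I^c}$ and matching the pencil's six products against the prescribed pairs $\{b_{ij}, b_{kl}\}$ with $\{i,j,k,l\}= I$ or $I^c$.

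The main obstacle is the combinatorial implication $(2) \Rightarrow (3)$: one must rule out every mixed configuration and show that the fifteen simultaneous syzygy conditions admit only the two prescribed global shapes, with no hybrid possibilities. The explicit octad-based construction in $(3) \Rightarrow (1)$ is a secondary source of difficulty, requiring a careful coordinate choice to ensure the six bitangent products appearing in $\lambda^T Q \lambda$ match the Steiner complex specified by~(3) rather than a different one sharing some pairs.
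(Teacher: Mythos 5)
The paper does not actually prove Theorem~\ref{thm:Steiner}; it attributes the result to Hesse and refers the reader to Salmon and Miller--Blichfeldt--Dickson for a proof. So there is no in-paper argument to compare against, and your proposal must be judged on its own. Your implication $(1)\Rightarrow(2)$ is correct and complete: since $Q$ is $2\times 2$, every expression $Q_{ij}Q_{kl}-Q_{ik}Q_{jl}$ in~(\ref{eq:Mmess}) is $0$ or $\pm\det Q=\pm f$, so $(\lambda_i^TQ\lambda_i)(\lambda_j^TQ\lambda_j)-(\lambda_i^TQ\lambda_j)^2 = cf$ and the conic $\lambda_i^TQ\lambda_j$ passes through all eight contact points of $\ell_i\ell_i'\ell_j\ell_j'$.

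The two remaining implications are only sketched, and one ingredient of the sketch is wrong. For $(2)\Rightarrow(3)$ you assert that ``a typical syzygetic quadruple is of type $\square$,'' but that classification is incomplete: perfect matchings $\{b_{i_1j_1},b_{i_2j_2},b_{i_3j_3},b_{i_4j_4}\}$ on all eight octad indices are also syzygetic. This is forced by a count: there are $63\cdot\binom{6}{2}=945$ incidences of a Steiner complex with a two-pair subset, and (as noted in the proof of Lemma~\ref{lem: Gram+Gram rank}) each syzygetic quadruple lies in exactly three Steiner complexes, so there are $315$ syzygetic quadruples; the $\square$-type contributes only $\binom{8}{4}\cdot 3=210$ of them, and the remaining $105=7\cdot 5\cdot 3\cdot 1$ are exactly the perfect matchings. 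Your case analysis must therefore juggle two shapes of syzygetic quadruple, not one, and you do not actually carry out the resulting exhaustion, which is the heart of the theorem. For $(3)\Rightarrow(1)$, the $|I|=2$ case does go through via a Schur complement --- this is precisely Lemma~\ref{lem:FromConics2Cubics}, where $b_{ij}^{-1}\lambda^TM^{\rm adj}\lambda$ for $\lambda\in{\rm span}\{O_i,O_j\}^\perp$ is the required quadratic system --- but the $|I|=4$ case is not ``handled analogously.'' The relevant line of $\lambda$'s is not a perpendicular to two octad points. One can get three of the six products from the identity in Remark~\ref{rem:TrianglesAndSquares}, say via $Q=\left(\begin{smallmatrix}2b_{ij}b_{kl} & q\\ q & 2b_{il}b_{jk}\end{smallmatrix}\right)$ with $q=b_{ij}b_{kl}-b_{ik}b_{jl}+b_{il}b_{jk}$, which has $\det Q=-f$ and contains $b_{ij}b_{kl}$, $b_{ik}b_{jl}$, $b_{il}b_{jk}$; but showing the three complementary products $b_{mn}b_{pq}$ for $\{m,n,p,q\}=I^c$ also lie in this pencil requires a further argument (e.g.\ invoking Gale self-duality of the octad, Theorem~\ref{thm:coble}), which your sketch does not supply.
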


\begin{proof}
This is a classical result due to Otto Hesse \cite{049.1317cj}. 
The proof can
also be found in the books of Salmon \cite{MR0115124} and Miller-Blichfeldt-Dickson
\cite[\S 185--186]{MR0123600}.
\end{proof}

A \emph{Steiner complex} is a sextuple $\mathcal{S}$ of pairs of
bitangents satisfying the conditions of Theorem~\ref{thm:Steiner}. A
pair of bitangents in $\mathcal{S}$ is either of the form $\{b_{ik},b_{jk}\}$ (referred
to as type $\bigvee$) or of the form $\{b_{ij},b_{kl}\}$ (type
$||$). The first type of Steiner complex in Theorem \ref{thm:Steiner} (3) 
contains pairs of bitangents of type $\bigvee$ and the
second type contains pairs of type $||$. There are
$\binom{8}{2}=28$ Steiner complexes  of type $\bigvee$
and $\binom{8}{4}/2=35$ Steiner complexes of bitangents of type
$||$. The two types of Steiner complexes are easy to remember by the
following combinatorial pictures:

\medskip
\begin{center}
  \begin{minipage}{0.4\linewidth}
    \begin{center}
      \begin{tikzpicture}
        \fill (1.5,1) circle (2pt); \fill (3.5,1) circle (2pt); \fill
        (0,0) circle (2pt); \fill (1,0) circle (2pt); \fill (2,0)
        circle (2pt); \fill (3,0) circle (2pt); \fill (4,0) circle
        (2pt); \fill (5,0) circle (2pt); \draw (1.5,1) -- (0,0); \draw
        (3.5,1) -- (0,0); \draw (1.5,1) -- (1,0); \draw (3.5,1) --
        (1,0); \draw[line width=1.5pt] (1.5,1) -- (2,0); \draw[line width=1.5pt]
        (3.5,1) -- (2,0); \draw (1.5,1) -- (3,0); \draw (3.5,1) --
        (3,0); \draw (1.5,1) -- (4,0); \draw (3.5,1) -- (4,0); \draw
        (1.5,1) -- (5,0); \draw (3.5,1) -- (5,0);
      \end{tikzpicture}\\
      Type $\bigvee$\hspace{.5em}
    \end{center}
  \end{minipage}
  \begin{minipage}{0.4\linewidth}
    \begin{center}
      \begin{tikzpicture}
        \fill (0,1) circle (2pt); \fill (1,1) circle (2pt); \fill
        (2,1) circle (2pt); \fill (3,1) circle (2pt); \fill (0,0)
        circle (2pt); \fill (1,0) circle (2pt); \fill (2,0) circle
        (2pt); \fill (3,0) circle (2pt); \draw[line width=1.5pt] (0,1) --
        (1,1); \draw (0,1) -- (1,0); \draw (0,1) -- (0,0); \draw (0,0)
        -- (1,1); \draw[line width=1.5pt] (0,0) -- (1,0); \draw (1,0) --
        (1,1); \draw (2,1) -- (3,1); \draw (2,1) -- (3,0); \draw (2,1)
        -- (2,0); \draw (2,0) -- (3,1); \draw (2,0) -- (3,0); \draw
        (3,0) -- (3,1);
      \end{tikzpicture}\\
      Type $||$\hspace{.5em}
    \end{center}
  \end{minipage}
\end{center}

\smallskip

This combinatorial encoding of Steiner complexes
enables us to derive the last column in 
Table \ref{table:sixtypes} in the Introduction.
We represent the quartic as (\ref{intro:SOS}) with
$A,B,C$ real, as in \cite{MR1225986}.
The corresponding Cayley octad 
$\{O_1,\ldots, O_8\}$ is invariant under complex conjugation.
Let $\pi$ be the permutation in $S_8$ that represents complex conjugation, 
 meaning $\overline{O_i}=O_{\pi(i)}$.  Then complex conjugation on the
$63$ Steiner complexes is given by the action
of $\pi$ on their labels. For instance, when all $O_i$ are real, as in the first row of
Table~\ref{table:sixtypes}, then $\pi$ is the identity. For the other rows we can relabel so that
$\, \pi = (1 2),\, \pi = (1 2)(34),\, \pi = (12)(34)(56)\,$ and $\, \pi =(12)(34)(56)(78)$.
We say that
a Steiner complex $\mathcal{S}$ is {\em real}
if its labels are fixed under $\pi$.
For example, if $\mathcal{S}$ is the Steiner complex
$\{\!\{b_{13},b_{23}\},$\dots$,\{b_{18},b_{28}\}\!\}$ of type $\bigvee$ as
above, then $\mathcal{S}$ is real if and only if
$\pi$ fixes $\{1,2\}$.  Similarly, if $\mathcal{S}$
is the Steiner complex $\{\{b_{12},b_{34}\}$, $\{b_{13},b_{24}\}$,
$\{b_{14},b_{23}\}$, $\{b_{56},b_{78}\}$, $\{b_{57},b_{68}\}$,
$\{b_{58},b_{67}\}\}$ of type $||$, then $\mathcal{S}$ is real if and only if
$\pi$ fixes the partition $\bigl\{\!\{1,2,3,4\}, \{5,6,7,8\}\!\bigr\}$.
For instance, for the empty curve, in the last row Table~\ref{table:sixtypes}, one can check that exactly
$15$ Steiner complexes are fixed by $ \pi =(12)(34)(56)(78)$, as listed in Section~\ref{sec:Gram}.

\smallskip

We now sum up what we have achieved in this section,
namely, a recipe for constructing the $63$ Gram matrices
from the $28+35$ Steiner complexes
$\bigvee$ and $||$.

\begin{proof}[Proof and Algorithm for Theorem \ref{thm:sixtythree}]
  We take as input a smooth ternary quartic $f\in\Q[x,y,z]$ and any of
  the $63$ Steiner complexes $\bigl\{ \! \{\ell_1,\ell'_1\},\dots,
  \{\ell_6,\ell'_6\}\!\bigr\}$ of bitangents of $\sV_\C(f)$. From this
  we can compute a rank-$3$ Gram matrix for $f$ as follows. The six
  contact conics $\ell_i \ell'_i$ span a $3$-dimensional subspace of
  $K[x,y,z]_2$, by Theorem~\ref{thm:Steiner} (1), of which
  $\{\ell_1\ell_1',\ell_2\ell_2', \ell_3\ell_3'\}$ is a basis.  The
  six vectors $\ell_i \ell_i'$ lie on a conic in that subspace, and we
  compute the symmetric $3 {\times} 3$-matrix $\wt C$ representing
  this conic in the chosen basis.  We then extend its inverse $\wt
  C^{-1}$ by zeroes to a $6 {\times} 6$ matrix $\wt G$ and fix an
  arbitrary basis $\{q_4,q_5,q_6\}$ of ${\rm
    span}\{\ell_1\ell_1',\ell_2\ell_2', \ell_3\ell_3'\}^{\perp}$ in
  $K[x,y,z]_2$. Let $T \in K^{6 \times 6}$ be the matrix taking $v =
  (x^2,y^2,z^2,xy,xz,yz)^T$ to $(\ell_1\ell_1',\ell_2\ell_2',
  \ell_3\ell_3',q_4,q_5,q_6)^T$. Then $\,G= T^T\wt G T\,$ is the
  desired rank-$3$ Gram matrix for $f$, and all rank-$3$ Gram matrices
  arise in this way.  Note that $G$ does not depend on the choice of
  $q_4, q_5, q_5$.
\end{proof}
    
\begin{Remark}
  Given $f$, finding a Steiner complex as input for the above
  algorithm is not a trivial task. But when a linear determinantal
  representation of $f$ is known, and thus a Cayley octad, one can use
  the criterion in Theorem~\ref{thm:Steiner} (3).
\end{Remark}

\begin{Example} \label{ex:BerndQuartic}
We consider the quartic $ f = {\rm det}(M)$ defined by the matrix
$$ 
M \,=\, \begin{pmatrix}
    52 x + 12 y - 60 z  & -26 x - 6 y + 30 z &  48 z  & 48 y \\
 \!  -26 x - 6 y + 30 z & 26 x + 6 y - 30 z & \! -6 x {+} 6 y {-} 30 z &    -45 x {-} 27 y {-} 21 z \! \\
     48 z & -6 x + 6 y - 30 z & -96 x & 48 x \\
    48 y & -45 x - 27 y - 21 z & 48 x & -48 x 
    \end{pmatrix} \! .
        $$
 The complex curve $\mathcal{V}_\C(f)$ is smooth and its set of real points
 $\mathcal{V}_\R(f)$ is empty.
The corresponding Cayley octad consists of four pairs of complex conjugates:
$$
O^T \,\, = \,\, \begin{pmatrix}
   i &  - i &  0 &  0 &  -6+4  i &  -6-4  i &  3+2  i &  3-2  i  \\
 \! 1+ i &  1- i &  0 &  0 &  -4+4  i &  -4-4  i &  7- i &  7+ i  \\
  0 &  0 &   i &  - i &  -3+2  i &  -3-2  i &  -\frac{86}{39}- \frac{4}{13}  i &  -\frac{86}{39}+\frac{4}{13}  i  \\
  0 &  0 &  1+ i &  1- i &  1- i &  1+ i &  \frac{4}{39}-\frac{20}{39}  i &  \frac{4}{39}+\frac{20}{39}  i  \\
\end{pmatrix} \! .
$$
Here the $8 {\times} 8$ bitangent matrix
$\,O M O^T = (b_{ij}) \,$ is defined over the field
  $K = \Q(i)$ of Gaussian rationals, and hence so are all 
  $63$ Gram matrices.
  According to the lower right entry in Table \ref{table:sixtypes},
  precisely $15$ of the Gram matrices are real, and hence these
  $15$ Gram matrices have their entries in $\Q$. For instance, the representation
$$
f \,\, = \,\, 288
 \begin{pmatrix}
x^2 \\ y^2 \\ z^2 \\ x y \\ x z \\ y z \end{pmatrix}^{\!\!\! T } \!
\begin{pmatrix}
	45500 & 3102 & -9861 & 5718 & -9246 & 4956 \\
	3102 & 288 & -747 & 882 & -18 & -144 \\
	-9861 & -747 & 3528 & -864 & -1170 & -504 \\
	5718 & 882 & -864 & 4440 & 1104 & -2412 \\
	-9246 & -18 & -1170 & 1104 & 11814 & -5058 \\
	4956 & -144 & -504 & -2412 & -5058 & 3582
               \end{pmatrix} \!
  \begin{pmatrix}
x^2 \\ y^2 \\ z^2 \\ x y \\ x z \\ y z \end{pmatrix}
$$
is obtained by applying our algorithm for Theorem \ref{thm:sixtythree}
to the Steiner complex
  $$ \mathcal{S} \,= \,
  \bigl\{\!\{ b_{13}, b_{58} \},\,
             \{ b_{15}, b_{38} \}, \, 
             \{ b_{18}, b_{35} \}, \,
             \{ b_{24}, b_{67} \},\,
            \{ b_{26}, b_{47} \},\,
            \{ b_{27}, b_{46} \}\! \bigr\} .$$
The above Gram matrix has rank $3$ and is positive semidefinite, so
it translates into a representation
 (\ref{intro:SOS}) for $f$ as the sum of three squares of quadrics over $\R$.
\qed
\end{Example}

\section{The Gram spectrahedron}\label{sec:Gram}

The {\em Gram spectrahedron} ${\rm Gram}(f)$ of a real ternary quartic $f$  is the set
of its positive semidefinite Gram matrices. This spectrahedron is
the intersection of the cone of positive semidefinite $6 {\times} 6$-matrices
with a $6$-dimensional affine subspace.
By Hilbert's result in
\cite{MR1510517}, ${\rm Gram}(f)$ is non-empty if and only if $f$ is non-negative.
In terms of coordinates on the $6$-dimensional subspace given by a fixed quartic 
\[
f(x,y,z) \,\,\, = \,\,\,  c_{400} x^4 + c_{310} x^3 y + c_{301} x^3 z + 
c_{220} x^2 y^2 + c_{211} x^2 y z + \cdots + c_{004} z^4 , 
\]
the Gram spectrahedron ${\rm Gram}(f)$ is the set of all positive semidefinite matrices
\begin{equation}\label{eq:GramMatrix}
\begin{small}
 \begin{pmatrix}
   c_{400} & \lambda_1 & \lambda_2 & \frac 12 c_{310} & \frac 12
   c_{301} & \lambda_4\\[.2em]
   \lambda_1 & c_{040} & \lambda_3 & \frac 12 c_{130} & \lambda_5 &
   \frac 12 c_{031}\\[.2em]
   \lambda_2 & \lambda_3 & c_{004} & \lambda_6 & \frac 12 c_{103} &
   \frac 12 c_{013}\\[.2em]
   \frac 12 c_{310} & \frac 12 c_{130} & \lambda_6 & c_{220} -
   2\lambda_1 & \frac 12 c_{211}-\lambda_4 & \frac 12
   c_{121}-\lambda_5\\[.2em]
   \frac 12 c_{301} & \lambda_5 & \frac 12 c_{103} & \frac 12
   c_{211}-\lambda_4 & c_{202}-2\lambda_2 & \frac 12 c_{112}-\lambda_6\\[.2em]
   \lambda_4 & \frac 12 c_{031} & \frac 12 c_{013} & \frac 12
   c_{121}-\lambda_5 & \frac 12 c_{112} - \lambda_6 & c_{022} -
   2\lambda_3
 \end{pmatrix}
  \end{small} \! ,
\,\, \text{where $\lambda \in \R^6$.}
\end{equation}
The main result of \cite{PRSS} is that a smooth positive quartic $f$
has exactly eight inequivalent representations as a sum of three real
squares, which had been conjectured in \cite{PR}. These eight
representations correspond to rank-$3$ positive semidefinite Gram
matrices.  We call these the \emph{vertices of rank $3$} of ${\rm
  Gram}(f)$.  In Section 5 we compute them using arithmetic over $K$.
 
 We define the {\em Steiner graph} of the Gram spectrahedron 
 to be the graph on the eight vertices of rank $3$ whose edges represent edges of
 the convex body ${\rm Gram}(f)$.
  
\begin{Thm}\label{thm:TwoK4s}
  The Steiner graph of the Gram spectrahedron ${\rm Gram}(f)$ of a generic positive ternary quartic $f$
  is the disjoint union $K_4 \sqcup K_4$ of two complete graphs, and the relative interiors of these edges consist of rank-$5$ matrices.
  \end{Thm}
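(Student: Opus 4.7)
The plan is to analyze edges of $\mathrm{Gram}(f)$ via rank drops in sums of rank-$3$ PSD Gram matrices. First I would observe that for two rank-$3$ vertices $G_1, G_2$, writing $G_i = V_i V_i^T$ with $V_i \in \R^{6 \times 3}$, the convex combination $tG_1+(1-t)G_2$ (for $t \in (0,1)$) is positive semidefinite with image $\mathrm{Im}(G_1)+\mathrm{Im}(G_2)$.  Hence the open segment $(G_1,G_2)$ lies on the boundary of $\mathrm{Gram}(f)$ exactly when $\mathrm{Im}(G_1) \cap \mathrm{Im}(G_2) \neq 0$, equivalently when the two $3$-dimensional subspaces $W_{G_1}, W_{G_2} \subset \R[x,y,z]_2$ spanned by the quadrics in the respective SOS decompositions share a non-zero element.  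For generic $f$ this intersection, when non-zero, is exactly $1$-dimensional, so $\mathrm{rank}(G_1+G_2) = 5$ and the segment is an exposed edge with rank-$5$ relative interior (the rank-$4$ stratum has codimension $3$ in the spectrahedron and is generically avoided by a $1$-parameter family).

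Combining with the Steiner-complex picture from Section~\ref{sec:SOS}, each rank-$3$ vertex $G$ corresponds to a Steiner complex $\mathcal{S}_G$, and by Theorem~\ref{thm:Steiner} the subspace $W_G$ equals the $3$-dimensional span of the six reducible conics $\ell\ell'$ with $\{\ell,\ell'\} \in \mathcal{S}_G$.  Since $63\cdot 6 = \binom{28}{2}$, every pair of bitangents lies in exactly one Steiner complex, so two distinct Steiner complexes share no pair of bitangents.  Consequently, any non-zero element of $W_{G_1} \cap W_{G_2}$ must be an \emph{irreducible} contact conic of $f$ belonging to two different contact-conic pencils.

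The main task is to identify, among the $\binom{8}{2}=28$ pairs of rank-$3$ PSD vertices, which admit such a shared contact conic.  I would fix a Cayley octad $O_1,\ldots,O_8$ on which complex conjugation acts as $\pi = (12)(34)(56)(78)$, corresponding to the empty-curve case in the last row of Table~\ref{table:sixtypes}.  Among the $15$ real Steiner complexes, exactly the eight whose pencils contain no real conic (by Remark~\ref{rem:PSDConic2PSDGram}) index the PSD vertices of $\mathrm{Gram}(f)$.  I would then verify that there exist exactly two distinguished contact conics $q_1, q_2$ of $f$ such that each $q_i$ lies in the span $W_G$ of precisely four of the eight vertices.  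With $v_i \in \R^6$ the coordinate vector of $q_i$, the face $F_{v_i} = \{G \in \mathrm{Gram}(f) : Gv_i = 0\}$ is $3$-dimensional for generic $f$, contains the corresponding four vertices as its only rank-$3$ extreme points, and is thus a tetrahedron whose six edges form a $K_4$.  Cross pairs between the two groups share no common null direction, so their sums have full rank $6$ and contribute no edge.  Together the two tetrahedra then furnish exactly $12 = 2\binom{4}{2}$ edges, yielding the Steiner graph $K_4 \sqcup K_4$.

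The hardest step is the third one: the combinatorial identification of the pair of distinguished quadrics $q_1, q_2$ and the resulting $4+4$ split of the eight PSD Steiner complexes.  I expect to handle this by explicit symbolic computation on a concrete positive quartic such as Example~\ref{ex:BerndQuartic}, whose Cayley octad and $15$ real Gram matrices are already available, and then to propagate the combinatorial type $K_4 \sqcup K_4$ to every generic positive quartic using the connectedness of the component of positive quartics in the discriminant complement (Theorem~\ref{thm:sixtypes}) together with semicontinuity of rank along continuous paths of spectrahedra.
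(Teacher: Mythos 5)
Your outline gets the framing and the reduction to Example~\ref{ex:BerndQuartic} roughly right, but the key combinatorial claim in your third step is incorrect, and it is precisely the step you flag as ``hardest.''

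You propose that there are two distinguished quadrics $q_1, q_2$ such that each $q_i$ is a common null vector of exactly four of the eight PSD rank-$3$ Gram matrices, so that each group of four vertices spans a $3$-dimensional exposed face $F_{v_i}$, a tetrahedron whose $\binom{4}{2}=6$ edges give a $K_4$. This is not what happens. In the paper's analysis (Lemmas~\ref{lem:conicsmeet} and~\ref{lem: Gram+Gram rank}), the rank drop $\rank(G_{\sS_I}+G_{\sS_J})\le 5$ for $|I\cap J|=2$ is witnessed by a \emph{shared contact conic} in $\conics(\sS_I)\cap\conics(\sS_J)$. Each such conic corresponds to a syzygetic quadruple of bitangents and, as the proof of Lemma~\ref{lem: Gram+Gram rank} notes, lies in $\conics(\sS)$ for exactly \emph{three} Steiner complexes — never four. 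Working it out on the first group $\{1357,1368,1458,1467\}$: the conic certifying the edge $\{1357,1368\}$ comes from the syzygetic quadruple $\{b_{13},b_{57},b_{24},b_{68}\}$, and its third Steiner complex is $\sS_{1234}$; the conic for $\{1357,1458\}$ has third complex $\sS_{1256}$; and so on. The six edges of each $K_4$ are therefore certified by six \emph{distinct} conics, each shared with one of the three non-PSD type-$||$ Steiner complexes $\sS_{1234},\sS_{1256},\sS_{1278}$, not by a single $q_i$ common to all four vertices. Consequently the four kernels $\ker(G_j)$ in a group do not have a common nonzero vector, there is no $3$-dimensional tetrahedral exposed face, and your proposed certification of the $K_4$'s collapses. (Indeed the authors explicitly state that they do \emph{not} know whether $\mathrm{Gram}(f)$ has any proper faces of dimension $\ge 1$ beyond the twelve edges, which would be immediately false under your tetrahedron picture.)

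The paper instead proceeds by certifying the $12$ rank-$\le 5$ pairs one edge at a time via the shared-conic mechanism (Lemma~\ref{lem:conicsmeet}), using the combinatorial criterion $|I\cap J|=2$ (Lemma~\ref{lem: Gram+Gram rank}) to identify exactly which pairs qualify, and separately identifying the eight PSD vertices via Proposition~\ref{prop:GramConic} and Lemma~\ref{lem:FromConics2Cubics}. The computational verification on Example~\ref{ex:BerndQuartic}, which you also invoke, then upgrades $\rank\le 5$ to $\rank=5$ for a generic $f$. So your overall structure (reduce to a rank computation, verify on one example, propagate by genericity) is aligned with the paper, but the concrete face-geometry you posit — two common kernel directions, two tetrahedra — is not the actual geometry, and a proof built on it would fail at the point where you try to exhibit the quadrics $q_1,q_2$.

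A secondary caveat: your opening claim that $[G_1,G_2]$ ``lies on the boundary exactly when $\mathrm{Im}(G_1)\cap\mathrm{Im}(G_2)\neq 0$'' identifies when the sum has rank $<6$, but being on the boundary is weaker than being a $1$-dimensional face. You do later note that the face must be checked to be exactly $1$-dimensional for generic $f$, which is the right instinct; keep in mind the paper leaves this at the level of an informal genericity argument as well.
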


 This theorem means that the eight rank-$3$ Gram matrices are divided into two groups of four,
and, for $G$ and $G'$ in the same group, we have
$\rank(G+G') \leq 5$. The second sentence asserts that 
$\rank(G+G') = 5$ holds for generic $f$. For the proof it suffices
to verify this for one specific $f$.
This we have done, using exact arithmetic,
for the quartic in Example \ref{ex:BerndQuartic}. For instance, the
rank-$3$ vertices
\[
\begin{array}{cc}
(\frac{1}{288})G=& (\frac{1}{288})G'=\\
\hspace{-.2cm}\begin{tiny}
\begin{pmatrix}
45500 & 3102 & -9861 & 5718 & -9246 & 4956 \\
3102 & 288 & -747 & 882 & -18 & -144 \\
-9861 & -747 & 3528 & -864 & -1170 & -504 \\
5718 & 882 & -864 & 4440 & 1104 & -2412 \\
-9246 & -18 & -1170 & 1104 & 11814 & -5058 \\
4956 & -144 & -504 & -2412 & -5058 & 3582
\end{pmatrix}\end{tiny}& \hspace{-.4cm}\begin{tiny}
\begin{pmatrix}45500 & -2802 & -6666 & 5718 & -9246 & 132 \\
-2802 & 288 & -72 & 882 & 1206 & -144 \\
-6666 & -72 & 3528 & -4878 & -1170 & -504 \\
5718 & 882 & -4878 & 16248 & 5928 & -3636 \\
-9246 & 1206 & -1170 & 5928 & 5424 & -1044 \\
132 & -144 & -504 & -3636 & -1044 & 2232
\end{pmatrix}\end{tiny}
\end{array}\]
both contain the vector $(11355,-4241, 47584, 8325, 28530,  36706)^T$ in their
kernel, so that $\rank(G+G') \leq 5$. But this vector spans the intersection
of the kernels, hence $\rank(G+G') = 5$, and every matrix
on the edge has rank~$5$.

We also know that
there exist instances of smooth positive quartics where the rank along an edge drops to $4$.
One such example is the Fermat quartic, $x^4+y^4+z^4$, which has two psd rank-$3$ Gram
matrices whose sum has rank~$4$. 
We do not know whether the Gram spectrahedron ${\rm Gram}(f)$ has  proper faces 
of dimension $\geq 1$ other than the twelve edges in  the
Steiner graph $K_4 \sqcup K_4$.
In particular, we do not know whether the
Steiner graph coincides with the graph of all edges of ${\rm Gram}(f)$.

\begin{proof}[Proof of Theorem~\ref{thm:TwoK4s}]
Fix a real symmetric linear determinantal representation
$M=xA+yB+zC$ of $f$. The existence of such $M$ when $f$
is positive was proved by Vinnikov \cite[\S0]{MR1225986}. 
The Cayley octad $\{O_1,\ldots,O_8\}$ determined by $M$
 consists of four pairs of complex conjugate
points. Recall
from Section \ref{sec:SOS} that a Steiner complex corresponds to
either a subset $I \subset \{1,\ldots,8\}$ with $|I|=2$ (type
$\bigvee$) or a partition $\, I | I^c\,$
 of $\{1,\ldots,8\}$ into two subsets of size $4$
  (type $||$). We write $\sS_I$ for the Steiner
complex given by $I$ or $I|I^c$ and $G_I$ for the corresponding
Gram matrix. Theorem~\ref{thm:TwoK4s} follows from the more
precise result in Theorem \ref{Thm:TwoK4sIndices} which we shall prove further below.
 \end{proof}

\begin{Thm}\label{Thm:TwoK4sIndices} 
  Let $f$ be positive with $\sV_{\C}(f)$ smooth and conjugation acting on the Cayley octad by 
  $\,\ol O_i=O_{\pi(i)}\,$ for
  $\,\pi=(12)(34)(56)(78)$. The eight Steiner complexes corresponding to the
  vertices of rank $3$ of the Gram spectrahedron ${\rm Gram}(f)$ are
   $$ \begin{matrix}
  1357|2468 & 1368|2457 & 1458|2367 & 1467|2358\\
  1358|2467 & 1367|2458 & 1457|2368 & 1468|2357
  \end{matrix}
$$
The Steiner graph $K_4 \sqcup K_4$ is given by pairs of Steiner complexes in the same row.
\end{Thm}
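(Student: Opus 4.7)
The plan is to enumerate the $15$ real Steiner complexes and decide which yield psd Gram matrices, using Remark~\ref{rem:PSDConic2PSDGram} together with the Powers--Reznick--Scheiderer--Sottile count that $f$ has exactly $8$ psd rank-$3$ Gram matrices. Combined, this forces the identification in the statement.

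A Steiner complex is real iff its label is fixed by $\pi=(12)(34)(56)(78)$. Type-$\bigvee$ complexes $\mathcal{S}_I$ with $|I|=2$ are real iff $I$ is a $\pi$-orbit, giving four cases. Type-$||$ complexes $\mathcal{S}_{I|I^c}$ are real iff $\pi$ either stabilises or swaps the two blocks: the stabilising case requires $I$ to be a union of two $\pi$-orbits ($3$ partitions), and the swapping case requires $I$ to contain exactly one element from each $\pi$-orbit ($2^4/2=8$ partitions, precisely those in the statement). Total $4+3+8=15$.

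I rule out seven of these as non-psd. For the three type-$||$ stabilising complexes, take $I=\{1,2,3,4\}$: the pair-products $b_{12}b_{34}$, $b_{13}b_{24}$, $b_{14}b_{23}$ are all real conics (the first because $b_{12}$ and $b_{34}$ are real bitangents, the others because $\overline{b_{13}}=b_{\pi(1)\pi(3)}=b_{24}$ makes $b_{13}b_{24}$ projectively real up to the positive scalar $|c|^2$, and analogously for $b_{14}b_{23}$), as is the mirror triple from $I^c$. All six pair-products are real conics in $\sQ$, so by Remark~\ref{rem:PSDConic2PSDGram} the Gram matrix is indefinite. For the four type-$\bigvee$ complexes $\mathcal{S}_{\{i,\pi(i)\}}$, conjugation permutes the six pair-products in three orbits of size $2$ with no individually real pair-product; here one must show that $\sQ$ still contains a real (necessarily non-reducible) member. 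This I would establish by choosing a real basis $(q_0',q_1',q_2')$ of $\ker(G)^\perp$, computing the $3\times 3$ matrix $C$ representing the Veronese conic $\sQ$ in this basis, and verifying that $\det C$ has sign opposite to the definite case: this can be checked on a single concrete example (e.g.\ the octad of Example~\ref{ex:BerndQuartic}), and the sign is constant on the connected component of positive quartics with fixed conjugation pattern on the octad. With $3+4=7$ indefinite complexes eliminated, the PRSS count forces the remaining $8$ type-$||$ $\pi$-swapping complexes to be exactly the psd vertices.

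For the graph assertion, observe that $\rank(G_I+G_J)\le 5$ iff $\mathrm{span}(\mathcal{S}_I)\cap\mathrm{span}(\mathcal{S}_J)\ne 0$ inside $\C[x,y,z]_2$. A direct calculation of symmetric differences shows that for $I,J$ in the same row the set $I\triangle J$ has size $4$ and is a union of two $\pi$-orbits, while for $I,J$ in different rows $|I\triangle J|\in\{2,6\}$. In the same-row case, applying~(\ref{eq:symmetricdeterminant}) to the four-set $I\triangle J$ yields an explicit signed combination of pair-products equal to a signed combination from each of $\mathcal{S}_I$ and $\mathcal{S}_J$, exhibiting a common quadric; across rows a dimension count using the complementary index patterns forces the spans to be complementary. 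That the rank equals $5$ (rather than being smaller) is generic and is verified in the explicit computation preceding the proof of Theorem~\ref{thm:TwoK4s}. The main obstacle is the type-$\bigvee$ step: since none of the six pair-products is individually real, distinguishing the ``standard'' from the ``antipodal'' real structure on the parameter $\P^1$ of $\sQ$ cannot be done by counting real pair-products and requires either a signature computation on a generic octad or a finer Cayley-octad invariant.
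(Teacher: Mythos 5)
Your overall framework matches the paper's: enumerate the $15$ real Steiner complexes, eliminate seven of them using Remark~\ref{rem:PSDConic2PSDGram}/Proposition~\ref{prop:GramConic}, and invoke the count of $8$ from \cite{PRSS} to conclude. Your treatment of the three type-$||$ ``stabilising'' complexes and your derivation of the $K_4 \sqcup K_4$ adjacency pattern from $|I \cap J| = 2$ (equivalently $|I \triangle J| = 4$) both agree with the paper's Lemmas~\ref{lem:conicsmeet} and~\ref{lem: Gram+Gram rank}.

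The genuine gap is in the type-$\bigvee$ step, and you have correctly identified it as the crux: for $\sS_{\{i,\pi(i)\}}$ none of the six pair-products is individually real, so one cannot simply exhibit a real reducible conic. Your proposed remedy --- compute a sign of $\det C$ on one concrete octad and propagate by ``connectedness of positive quartics with fixed conjugation pattern'' --- is not a proof. You would need to (i) set up the sign invariant precisely (it is not immediate that a discrete sign attached to a real $2\times 2$ quadratic system varies continuously in the coefficients of $f$, since the real structure on the parameter $\P^1$ depends on the choice of real determinantal representation), and (ii) show the relevant parameter space is connected, which Theorem~\ref{thm:sixtypes} gives for quartics but not for the pair (quartic, real determinantal representation). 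The paper instead gives a short constructive argument, Lemma~\ref{lem:FromConics2Cubics}: the identity $\, b_{ij}\cdot\sQ_{ij} = \bigl\{\lambda^T M^{\rm adj}\lambda \mid \lambda \in {\rm span}\{O_i,O_j\}^\perp\bigr\}$, combined with the facts that $M$ is real and ${\rm span}\{O_i,O_{\pi(i)}\}^\perp$ contains a real $\lambda$, produces a real contact cubic, and dividing by the real bitangent $b_{ij}$ yields a real conic in $\sQ_{ij}$. This is the missing idea your proposal needs. The auxiliary observation --- that azygetic contact cubics to a quartic factor as (bitangent)$\,\times\,$(contact conic), and that the system $\lambda^T M^{\rm adj}\lambda$ realises this for $\lambda$ ranging over the hyperplane orthogonal to two octad points --- is exactly the bridge between the Cayley octad and the $\bigvee$-type contact conic systems.

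A secondary, smaller point: your statement that ``applying~(\ref{eq:symmetricdeterminant}) to the four-set $I\triangle J$ yields... a common quadric'' is the right picture but is not quite an argument. The paper proves this cleanly by first showing (via identity~(\ref{eq:Mmess})) that each conic in $\conics(\sS)$ lies in $\ker(G_\sS)^\perp$, and then noting that each syzygetic quadruple's conic appears in $\conics(\sS)$ for exactly three Steiner complexes $\sS$; the intersection condition $\conics(\sS_I) \cap \conics(\sS_J) \neq \emptyset$ then follows from the pure index combinatorics $|I \cap J| = 2$.
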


Our proof of Theorem~\ref{Thm:TwoK4sIndices} consists of two parts:
(1) showing that the above Steiner complexes 
give the positive semidefinite Gram matrices and (2) showing how they form two copies of $K_4$. We
will begin by assuming (1) and proving (2):

By Theorem~\ref{thm:Steiner}, for any two pairs of bitangents 
$\{\ell_1,\ell_1'\}$ and $\{\ell_2,\ell_2'\}$ in a fixed Steiner complex $\sS$, there is a 
conic $u$ in $\P^2$ that passes through the eight contact points of these four
bitangents with $\sV_\C(f)$.  In this manner,  one 
associates with every Steiner complex $\sS$ a set of $\binom{6}{2}=15$
conics, denoted $\conics(\sS)$.

\begin{Lemma} \label{lem:conicsmeet}
  Let $\sS$ and $\sT$ be Steiner complexes with Gram matrices $G_{\sS}$
  and $G_{\sT}$.  If $\conics(\sS)\cap\conics(\sT)\neq \emptyset$ then $\rank(G_{\sS}+G_{\sT})\leq
  5$.
\end{Lemma}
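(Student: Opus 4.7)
The plan is to identify $\imag(G_\sS) \subset \C^6$ (under the monomial identification $\C^6 \cong K[x,y,z]_2$) with the three-dimensional subspace of $K[x,y,z]_2$ spanned by the six bitangent products in $\sS$. A common nonzero conic in $\conics(\sS) \cap \conics(\sT)$ will then force these two three-dimensional images to share a line, and a direct dimension count gives the rank bound on $G_\sS + G_\sT$.

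The key step, and what I expect to be the main obstacle to set up cleanly, is the identification
\[
\imag(G_\sS) \,=\, V_\sS \,:=\, \mathrm{span}\bigl\{\ell_i\ell_i' : \{\ell_i,\ell_i'\} \in \sS\bigr\}.
\]
For this I use the factorization $G_\sS = T^T \wt G T$ constructed in the proof of Theorem~\ref{thm:sixtythree}, in which $\wt G$ has the invertible block $\wt C^{-1}$ in its upper-left $3 {\times} 3$ corner and zeros elsewhere, and $T$ is the change of basis matrix from the monomial basis $v=(x^2,y^2,z^2,xy,xz,yz)^T$ to $(\ell_1\ell_1',\ell_2\ell_2',\ell_3\ell_3',q_4,q_5,q_6)^T$. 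Since $T$ is invertible and $\imag(\wt G) = \mathrm{span}\{e_1,e_2,e_3\}$, the image of $G_\sS$ is the span of the first three columns of $T^T$, which by the definition of $T$ are precisely the monomial coefficient vectors of $\ell_1\ell_1',\ell_2\ell_2',\ell_3\ell_3'$. By Theorem~\ref{thm:Steiner}(1), these three quadrics already span the same three-dimensional subspace $V_\sS$ as all six products.

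Next I would check that $\conics(\sS) \subset V_\sS$. In the quadratic determinantal representation $f = \det(Q)$ from \eqref{eq:thirdQMR} associated to $\sS$, the proof of Proposition~\ref{prop:ContactConics} shows that every conic passing through the eight contact points of a pair $\{\ell_i\ell_i', \ell_j\ell_j'\}$ of products takes the form $\lambda^T Q \mu$ for suitable $\lambda,\mu \in \C^2$. This is a bilinear combination of the three entries of $Q$, which span the same subspace $V_\sS$ as $\{\ell_i\ell_i'\}$, so $\lambda^T Q \mu \in V_\sS$.

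Assembling the pieces, if $u \in \conics(\sS) \cap \conics(\sT)$ is nonzero, then $u$ is a nonzero element of $V_\sS \cap V_\sT$, so
\[
\dim(V_\sS + V_\sT) \,\leq\, \dim V_\sS + \dim V_\sT - \dim(V_\sS \cap V_\sT) \,\leq\, 3 + 3 - 1 \,=\, 5.
\]
Since $\imag(G_\sS + G_\sT) \subseteq \imag(G_\sS) + \imag(G_\sT) = V_\sS + V_\sT$, we conclude $\rank(G_\sS + G_\sT) \leq 5$.
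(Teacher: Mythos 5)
Your proof is correct and follows essentially the same route as the paper's. The paper shows $\conics(\sS)\subseteq\ker(G_\sS)^\perp$ using the identity $u=\lambda^T Q\mu$ and concludes by noting that the two perps meeting forces the kernels to meet; you argue dually via $\imag(G_\sS)=V_\sS$ (equivalent to $\ker(G_\sS)^\perp$ for a symmetric matrix under the standard bilinear form on $\C^6$) and then add images rather than intersect kernels, which is the same dimension count.
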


\begin{proof} 
Suppose $\sS = \{\{\ell_1,\ell_1'\},\hdots, \{\ell_6,\ell_6'\}\}$.
Let $Q$ be a quadratic matrix
representation (\ref{eq:thirdQMR}) such that the six points $\ell_1\ell_1', \hdots,
\ell_6\ell_6' \in \P(\C[x,y,z]_2)$
lie on the conic $\{\lambda^TQ\lambda\;:\;\lambda \in \P^1(\C)\}$.
By the construction in the proof of Theorem~\ref{thm:sixtythree},
we know that the projective plane in 
$\P(\C[x,y,z]_2)$ spanned by this conic is $\ker(G_{\sS})^{\perp}$.

Consider two pairs $\{\ell_1,\ell_1'\},\{\ell_2,\ell_2'\}$ from $\sS$
and let $u \in \conics(\sS)$ be the unique conic passing through the
eight contact points of these bitangents with the curve $\sV_{\C}(f)$. By our choice of
$Q$, we can find $\lambda, \mu \in \P^1$ such that
$\lambda^TQ\lamda=\ell_1\ell_1'$ and $\mu^TQ\mu = \ell_2\ell_2'$. 
Equation~(\ref{eq:Mmess}) then shows that $u=\lambda^TQ\mu$. From this we see 
that $u\in {\rm span}\{Q_{11},Q_{12},Q_{22}\} =\ker(G_{\sS})^{\perp}$. 
Therefore, $\conics(\sS) \subseteq \ker(G_{\sS})^{\perp}$.

If $\conics(\sS) \cap \conics(\sT) \neq \emptyset$, then the two 3-planes
$\ker(G_{\sS})^{\perp}$ and $\ker(G_{\sT})^{\perp}$ meet nontrivially. Since
$\C[x,y,z]_2$ has dimension $6$, this implies that $\ker(G_{\sS})$ and
$\ker(G_{\sT})$ meet nontrivially. Hence $\rank(G_{\sS}+G_{\sT})\leq 5$. 
\end{proof}

For example, 
$\conics(\sS_{1358})$ and $ \conics(\sS_{1457})$ share the conic going through 
the contact points of $b_{15},b_{26},b_{38},$ and $b_{47}$. 
Lemma~\ref{lem:conicsmeet} then implies
 $\rank(G_{1358}+G_{1457}) \leq 5$, as shown above for Example~\ref{ex:BerndQuartic} with $G = G_{1358}$ and $G'=G_{1457}$.
 
 Using this approach, we only have to check that
$\conics(\sS_I)\cap\conics(\sS_J)\neq\emptyset$ when $I$ and $J$ are in the
same row of the table in Theorem~\ref{Thm:TwoK4sIndices}. More precisely:

\begin{Lemma}\label{lem: Gram+Gram rank} Let $I$ and $J$ be subsets of
  $\{1,\hdots, 8\}$ of size four with $I \neq J$ and $I\neq J^c$.  
  Then $\,\conics(\sS_{I})\cap \conics(\sS_{J}) \neq \emptyset \,$ if
  and only if $\, |I\cap J| = 2$.
\end{Lemma}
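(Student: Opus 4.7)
The plan is to prove both directions combinatorially, using the explicit description of the pairs in a type-$||$ Steiner complex from Theorem~\ref{thm:Steiner}~(3).

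For the easy direction ($\Leftarrow$), I would exhibit a concrete quadruple in both Steiner complexes. Write $I\cap J=\{a,b\}$, $I\cap J^c=\{c,d\}$, $I^c\cap J=\{e,f\}$, $I^c\cap J^c=\{g,h\}$, so that $I=\{a,b,c,d\}$ and $J=\{a,b,e,f\}$. Then the quadruple $\{b_{ab},b_{cd},b_{ef},b_{gh}\}$ decomposes as a union of two pairs of $\sS_I$ via $\{b_{ab},b_{cd}\}\cup\{b_{ef},b_{gh}\}$ (index sets $I$ and $I^c$), and also as a union of two pairs of $\sS_J$ via $\{b_{ab},b_{ef}\}\cup\{b_{cd},b_{gh}\}$ (index sets $J$ and $J^c$). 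By Theorem~\ref{thm:Steiner}~(2), the eight contact points of these four bitangents lie on a common conic, which is the desired element of $\conics(\sS_I)\cap\conics(\sS_J)$.

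For the converse, I would classify unions of two pairs inside a single type-$||$ Steiner complex $\sS_I$. Since each pair in $\sS_I$ carries index set $I$ or $I^c$, the quadruple formed by two such pairs falls into three cases: (i) both pairs have index set $I$, so the quadruple uses only indices in $I$, each twice; (ii) both have index set $I^c$; or (iii) one pair has index set $I$ and the other $I^c$, so the quadruple uses each of $\{1,\dots,8\}$ exactly once. The same trichotomy applies to $\sS_J$, so a common conic would place a single quadruple simultaneously in one case for each complex. WLOG, by swapping $J\leftrightarrow J^c$ if needed, assume $|I\cap J|=1$; then $|I\cap J|=1,\ |I\cap J^c|=3,\ |I^c\cap J|=3,\ |I^c\cap J^c|=1$. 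I would then eliminate all nine cross-combinations: each requires some pair to have 4-element index set equal to $J$ or $J^c$, while its two bitangents' indices are constrained to sit inside fixed intersections of $I^{(c)}$ and $J^{(c)}$. Whenever the intersection has size~$1$, no 2-subset exists; whenever only intersections of sizes $3$ and $1$ are available, a disjoint union of two 2-subsets cannot assemble a full 4-element set $J$ or $J^c$.

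The main obstacle is the organized bookkeeping for the converse: nine subcases, each killed by an index-counting argument. The uniform principle is pigeonhole against the bipartition sizes $(1,3,3,1)$: forming a 4-set as a disjoint union of two 2-sets requires each summand to have at least two indices available, which fails whenever a size-$1$ intersection is forced by the chosen combination. Once this principle is isolated, the verification becomes mechanical.
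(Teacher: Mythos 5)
Your proof is correct and follows the same route as the paper's: a common conic corresponds to a syzygetic quadruple of bitangents that re-pairs into pairs of both $\sS_I$ and $\sS_J$, and the combinatorics of the index sets then forces $|I\cap J|=2$. The paper's last step (``this translates into $|I\cap J|=2$'') is terse and your case analysis fills it in; you could streamline the converse by noting that only the (iii)--(iii$'$) cross-combination is ever alive, since in cases (i) and (ii) the quadruple's indices lie entirely in $I$ or in $I^c$, so no re-pairing can produce an index set $J$ or $J^c$ for any $J\neq I,\,I^c$.
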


\begin{proof} 
Every syzygetic set of four bitangents $\ell_1,\ell_2,\ell_3,\ell_4$
determines a unique conic $u$ passing through their eight contact
points with $\sV_{\C}(f)$.  There are three ways to collect the four
bitangents into two pairs, so $u$ appears in $\conics(\sS)$ for exactly three
Steiner complexes. Thus for two Steiner complexes $\sS_I$ and
$\sS_J$, we have $\conics(\sS_I)\cap\conics(\sS_J)\neq \emptyset$ if and only if
there are bitangents $\ell_1,\ell_2,\ell_3,\ell_4$ such that
$\{\ell_1{,}\ell_2\},\{\ell_3{,}\ell_4\}\in \sS_I$ and
$\{\ell_1{,}\ell_3\},\{\ell_2{,}\ell_4\}\in \sS_J$. This translates 
into  $| I \cap J| = 2 $.
\end{proof}

To complete the proof of Theorem~\ref{Thm:TwoK4sIndices}, it remains
to show that the eight listed Steiner complexes give positive semidefinite
Gram matrices.
Recall that a Steiner complex $\sS_I$ is real if and only if $I$ is fixed
by the permutation $\pi$ coming from conjugation. As stated in Section
\ref{sec:octad}, there are 15 real Steiner complexes, namely,
\begin{enumerate}\label{test}
\item The eight complexes of type $||$ listed in
  Theorem~\ref{Thm:TwoK4sIndices}.
\item Three more complexes of type $||$, namely $\,1234|5678$,
  $1256|3478$, $1278|3456$.
\item Four complexes of type $\bigvee$, namely $\,12$, $34$, $56$, $78$.
\end{enumerate}
Since we know from \cite{PRSS} that exactly eight of these give positive
semidefinite Gram matrices,
it suffices to rule out the seven Steiner complexes in (2) and (3).
Every Steiner complex $\sS_I$ gives rise to a system of contact conics
$\sQ_I=\{\lambda^T Q_I\lambda$, $\lambda\in\P^1(\C)\}$, where $Q_I$ is
a symmetric $2 {\times} 2$-matrix as in (\ref{eq:thirdQMR}),
and a rank-$3$ Gram matrix $G_I$ for $f$.
The following proposition is a direct consequence of Remark~\ref{rem:PSDConic2PSDGram}.

\begin{Prop}\label{prop:GramConic}
 Let $\sS_I$ be a real Steiner complex. The Gram matrix $G_I$ is
 positive semidefinite if and only if the system $\sQ_I$ does not contain any
 real conics.
\end{Prop}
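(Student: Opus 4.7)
The plan is to reduce the proposition to Remark~\ref{rem:PSDConic2PSDGram}, whose hypothesis requires only that the Gram matrix under consideration be real. So the first step is to verify that the Gram matrix $G_I$ associated with a real Steiner complex $\sS_I$ can indeed be chosen real, i.e.\ lies in the affine slice \eqref{eq:GramMatrix}. For this, I would observe that $\sS_I$ real means the set of six products $\{\ell_1\ell_1',\dots,\ell_6\ell_6'\}$ is closed under complex conjugation. Hence the $3$-dimensional subspace of $\C[x,y,z]_2$ they span is defined over $\R$, and by the construction in the proof of Theorem~\ref{thm:sixtythree} this subspace equals $\ker(G_I)^{\perp}$. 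Choosing a real basis of this subspace when carrying out the construction produces a real Gram matrix $G_I$ (the representative in its projective equivalence class).

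Second, I would invoke Remark~\ref{rem:PSDConic2PSDGram} verbatim: for this real $G_I$, being positive or negative semidefinite is equivalent to the system $\sQ_I = \{\lambda^{T} Q_I \lambda : \lambda \in \P^1(\C)\}$ containing no real conic. So the ``only if'' direction of the proposition is immediate, and for the ``if'' direction it remains to exclude the negative semidefinite alternative.

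Third, I would rule out the negative semidefinite case using the standing assumption that $f$ is a positive quartic (which is the hypothesis under which ${\rm Gram}(f)$ is discussed in this section; recall Hilbert's theorem). If $G_I$ were negative semidefinite, then for every real point $(x,y,z)\in\R^3$ the vector $v=(x^2,y^2,z^2,xy,xz,yz)^{T}\in\R^6$ would satisfy $f(x,y,z)=v^{T}G_I v\le 0$, contradicting positivity of $f$. Hence the semidefinite option allowed by Remark~\ref{rem:PSDConic2PSDGram} must be the positive one, completing the equivalence.

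There is no substantive obstacle here; the only point that takes a moment of thought is the reality of $G_I$, which is a consequence of $\sS_I$ being fixed by the conjugation permutation $\pi$ acting on the Cayley octad. Everything else is a direct appeal to Remark~\ref{rem:PSDConic2PSDGram} together with the sign normalization forced by $f>0$.
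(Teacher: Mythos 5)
Your proof is correct and follows the approach the paper intends; the paper simply states the proposition is a direct consequence of Remark~\ref{rem:PSDConic2PSDGram}, and you unpack exactly the two implicit steps needed: that $G_I$ is real because a real Steiner complex spans a conjugation-stable $3$-plane, and that positivity of $f$ (via $f = v^T G_I v$) rules out the negative semidefinite alternative allowed by the Remark. One small imprecision: rank-$3$ Gram matrices are not taken up to projective equivalence — $G_I$ is the unique Gram matrix attached to $\sS_I$ under the bijection of Theorem~\ref{thm:sixtythree} — but your argument that the construction carried out over $\R$ yields that unique matrix is still the right one.
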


It follows that if $\sS_I$ is one of the three Steiner complexes in (2),
then the Gram matrix $G_I$ is not positive semidefinite,
since the system $\sQ_I$
contains a product of two of the real bitangents
$b_{12},b_{34},b_{56},b_{78}$. Thus  it remains to show that if
 $I = ij$ with $ij \in \{12,34,56,78\}$  as in (3), 
 then the system $\sQ_{ij}$ contains a real conic. 
 
 The symmetric linear determinantal
representation $M$ gives rise to the system $\{\lambda^TM^{\rm
  adj}\lambda\, | \, \lambda\in\P^3(\C)\}$ of (azygetic) contact cubics (see \cite[\S6.3]{Dol}). 
The main idea of the following is that multiplying a bitangent with a 
contact conic of $f$ gives a contact cubic, and if both the bitangent 
and the cubic are real, then the conic must be real. The next lemma 
identifies products of bitangents and contact conics
inside the system of contact cubics  given by $M$.

\begin{Lemma}\label{lem:FromConics2Cubics}
For $i\neq j$ we have $\, b_{ij}\cdot\sQ_{ij} \,= \,\bigl\{\lambda^TM^{\rm
  adj}\lambda\, | \,\lambda\in{\rm span}\{O_i,O_j\}^\perp\bigr\}$.
  \end{Lemma}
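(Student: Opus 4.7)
My plan is to reduce to an adapted basis and then match the two $\P^1$-families of contact conics by a dimension/degree argument in $\P^5$. After a change of basis $U \in \GL_4(\C)$ replacing $M$ by $U^T M U$ and each $O_a$ by $U^{-1}O_a$, I may assume $O_i = e_1$ and $O_j = e_2$. The octad identities $O_a M O_a^T = 0$ together with $O_i M O_j^T = b_{ij}$ then force
\[
M \,=\, \begin{pmatrix} 0 & b_{ij} & m_{13} & m_{14} \\ b_{ij} & 0 & m_{23} & m_{24} \\ m_{13} & m_{23} & m_{33} & m_{34} \\ m_{14} & m_{24} & m_{34} & m_{44} \end{pmatrix},
\]
and $\mathrm{span}\{O_i,O_j\}^\perp = \mathrm{span}\{e_3,e_4\}$.

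Next I would verify divisibility: for $\mu = (0,0,\mu_3,\mu_4)^T$, the three entries $(M^{\mathrm{adj}})_{33}$, $(M^{\mathrm{adj}})_{34}$, $(M^{\mathrm{adj}})_{44}$ are signed $3{\times}3$ minors of $M$, and since the upper-left $2{\times}2$ block has zero diagonal and $b_{ij}$ off-diagonal, each of these minors factors through $b_{ij}$. This gives
\[
\mu^T M^{\mathrm{adj}} \mu \,=\, b_{ij} \cdot q(\mu_3,\mu_4),
\]
where $q \in \C[x,y,z]_2$ depends quadratically on $(\mu_3,\mu_4) \in \C^2$.

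I would then hit the Steiner pairs explicitly. Writing the remaining octad points as $O_k = (\alpha_k,\beta_k,\gamma_k,\delta_k)$ for $k \notin \{i,j\}$, the bitangents $b_{ik} = e_1 M O_k^T$ and $b_{jk} = e_2 M O_k^T$ can be read off the first two rows of $M$. Expanding $b_{ik}b_{jk}$ produces a term $\alpha_k\beta_k b_{ij}^2$ together with $b_{ij}$ times a linear combination of the $m_{ab}$. Substituting the octad relation $O_k M O_k^T = 0$ to absorb both of these contributions leads to the identity
\[
b_{ik} b_{jk} \,=\, \tfrac{1}{2}\, q(\delta_k,\, -\gamma_k),
\]
so every product $b_{ik} b_{jk}$ with $k \neq i,j$ lies in the image of $q$.

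To conclude, observe that $q : \P^1 \to \P(\C[x,y,z]_2) = \P^5$ is a degree-$2$ morphism, so its image is a curve of degree at most $2$ containing the six (generically distinct) points $\{b_{ik}b_{jk} : k \neq i,j\}$. By Proposition~\ref{prop:ContactConics} and Theorem~\ref{thm:Steiner}, $\mathcal{Q}_{ij}$ is likewise a conic in $\P^5$ containing these same six points. Two conics in $\P^5$ sharing six distinct points must coincide: they lie in a common $2$-plane, and B\'ezout in $\P^2$ forbids distinct plane conics from meeting in more than four points. Hence $\mathrm{image}(q) = \mathcal{Q}_{ij}$, which combined with the divisibility formula gives the lemma. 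The main obstacle is the algebraic matching in the third step: without careful use of $O_k M O_k^T = 0$, both a $b_{ij}^2$-term and a $b_{ij}$-linear term remain in the expansion of $b_{ik}b_{jk}$ and obstruct the comparison with $q(\delta_k,-\gamma_k)$; the rest of the argument is conceptually clean.
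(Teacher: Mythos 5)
Your proof is correct and follows the same core strategy as the paper — normalize so that $O_i,O_j$ become standard basis vectors, observe that the lower $2\times 2$ cofactor block of $M^{\mathrm{adj}}$ is divisible by $b_{ij}$, and match the resulting quadratic system against $\sQ_{ij}$ via the products $b_{ik}b_{jk}$. The differences are in the execution. The paper normalizes \emph{four} octad points $O_i,O_j,O_k,O_l$ to the unit vectors, which forces $M$ to have a completely zero diagonal; the cofactors $e_3^T M^{\mathrm{adj}} e_3$, etc., are then visibly $2b_{ij}$ times a product of two bitangents, and the conclusion is reached by an appeal to symmetry (``similarly'') plus the picture, with the identification of the system with $\sQ_{ij}$ left implicit. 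You instead normalize only $O_i,O_j$, so the identity $b_{ik}b_{jk} = \tfrac12\,q(\delta_k,-\gamma_k)$ has to be extracted by expanding $b_{ik}b_{jk}$ and cancelling against $O_kMO_k^T=0$ — I checked that this algebra closes exactly as you say — and you then close the argument cleanly with the observation that two smooth conics in $\P^5$ through six (in fact already five) distinct points must share their $2$-plane and coincide by B\'ezout. Your version is more uniform (it treats all six $k\notin\{i,j\}$ at once rather than two of them plus ``similarly'') and makes the final identification with $\sQ_{ij}$ a rigorous step rather than an assertion, at the cost of a heavier computation in step three; the paper's version is quicker because the stronger normalization makes the cofactors legible by eye. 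Both are valid proofs of the lemma.
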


\begin{proof}
After a change of coordinates, we can assume that $O_i,O_j,O_k,O_l$ are the four unit vectors $e_1,e_2,e_3,e_4$. This means that $M = xA+yB+zC$ takes the form
\[ M \,\, = \,\,\,
\begin{pmatrix}
  0 & b_{ij} & b_{ik} & b_{il}\\
  b_{ij} & 0 & b_{jk} & b_{jl}\\
  b_{ik} & b_{jk} & 0 & b_{kl}\\
  b_{il} & b_{jl} & b_{kl} & 0
\end{pmatrix} \! .
\]
Consider the three $3 {\times} 3$-minors complementary to the
lower $2 {\times} 2$-block of $M$. They are $\,e_3^TM^{\rm adj}e_3$, $e_3^TM^{\rm adj}e_4$,
$e_4^TM^{\rm adj}e_4$. We check that all three are divisible by $b_{ij}$. Therefore
$b_{ij}^{-1}\cdot\lambda^TM^{\rm adj}\lambda$ with
$\lambda\in{\rm span}\{e_3,e_4\}$ is a system of contact conics. Note that
$b_{ik}b_{jk}=b_{ij}^{-1}e_4^TM^{\rm adj}e_4$. Similarly, we can find
the other six products of pairs of bitangents from
the Steiner complex $\sS_{ij}$, as illustrated by the following picture: 
\vspace{-.6cm}
 \begin{center}      \begin{tikzpicture}
       \fill (1.5,1) circle (2pt) node[above]{$i$}; \fill (3.5,1)
       circle (2pt) node[above]{$j$}; \fill
       (0,0) circle (2pt); \fill (1,0) circle (2pt); \fill (2,0)
       circle (2pt); \fill (3,0) circle (2pt); \fill (4,0) circle
       (2pt); \fill (5,0) circle (2pt); \draw (1.5,1) -- (0,0); \draw
       (3.5,1) -- (0,0); \draw (1.5,1) -- (1,0); \draw (3.5,1) --
       (1,0); \draw[line width=1.5pt] (1.5,1) -- (2,0); \draw[line width=1.5pt]
       (3.5,1) -- (2,0); \draw (1.5,1) -- (3,0); \draw (3.5,1) --
       (3,0); \draw (1.5,1) -- (4,0); \draw (3.5,1) -- (4,0); \draw
       (1.5,1) -- (5,0); \draw (3.5,1) -- (5,0);\draw[line width=1.5pt]
       (3.5,1) -- (1.5,1);
     \end{tikzpicture}
     \end{center}
Hence 
the system of contact conics $\sQ_{ij}$ arises from division by $b_{ij}$ as asserted.
\end{proof}

\begin{proof}[Proof of Theorem~\ref{Thm:TwoK4sIndices} (and hence of
Theorem~\ref{thm:TwoK4s})]
With all the various lemmas in place, only one tiny step is left to be done.
Fix any of the four Steiner complexes $ij$ of type $\bigvee$ in (3).
Then the bitangent $b_{ij}$ is real.  
Since $M$ is real and $\ol O_i=O_j$,
we can pick a real point $\lambda\in{\rm span}\{O_i,O_j\}^{\perp}$.
Lemma \ref{lem:FromConics2Cubics} implies that
 that $\sQ_{ij}$ contains the real conic
$\,b_{ij}^{-1}\cdot \lambda^TM^{\rm adj}\lambda$. Proposition~\ref{prop:GramConic}
now completes the proof.
\end{proof}

\smallskip

Semidefinite programming over the Gram spectrahedron ${\rm Gram}(f)$
means finding the best sum of squares representation of a positive
quartic $f$, where ``best'' refers to some criterion that can be
expressed as a linear functional on Gram matrices.
This optimization problem is of particular interest from the
perspective of Tables 1 and 2 in  \cite{MR2546336}, 
because $m=n=6$ is the smallest instance where
the Pataki range of optimal ranks has size three.
For the definition of {\em Pataki range} see also equation (5.16) in \cite[\S 5]{RS}.
The matrix rank of the exposed vertices of a {\em generic} $6$-dimensional spectrahedron 
 of $6 {\times} 6$-matrices can be either $3$, $4$ or $5$.

The Gram spectrahedra ${\rm Gram}(f)$ are not generic but they exhibit
the generic behavior as far as the Pataki range is concerned.
Namely, if we optimize
a linear function over ${\rm Gram}(f)$
then the rank of the optimal matrix
can be either $3$, $4$ or $5$.
We obtained the following numerical result for the 
distribution of these ranks by optimizing a random linear function 
over ${\rm Gram}(f)$ for randomly chosen $f$:

\begin{table}[htb] 
\centering
\begin{tabular}{|l|c|c|c||c|}
   \hline
   Rank of optimal matrix & $3$ & $4$ & $5$ & any\\
   \hline\hline
   Algebraic degree & 63 & 38 & 1 & 102\\
   \hline
   Probability & 2.01\% & 95.44\% & 2.55\% & 100\%\\
   \hline
 \end{tabular}
\medskip
\caption{Statistics for semidefinite programming over Gram spectrahedra.}
\label{table:RankTable}
\end{table}

The sampling in Table \ref{table:RankTable} was done in {\tt
  matlab}\footnote{\tt www.mathworks.com}, using the random matrix
generator.  
This distribution for the three
possible ranks appears to be close to that of
the generic case, as given in \cite[Table 1]{MR2546336}. 
The algebraic
degree of the optimal solution, however, is much lower than in the
generic situation of \cite[Table 2]{MR2546336}, where the three
degrees are $112$, $1400$ and $32$.  For example, while the rank-$3$
locus on the generic spectrahedron has $112$ points over $\C$, our
Gram spectrahedron ${\rm Gram}(f)$ has only $63$, one for each Steiner
complex.

The greatest surprise in Table \ref{table:RankTable} is the number $1$
for the  algebraic degree of the rank-$5$ solutions.
This means that the optimal solution of
a rational linear function over the Gram spectrahedron ${\rm Gram}(f)$ is
$\Q$-rational whenever it has rank~$5$.
For a concrete example, consider the problem of maximizing
the function
\[
159\lambda_1-9\lambda_2+34\lambda_3+73\lambda_4+105\lambda_5+86\lambda_6
\]
over the Gram spectrahedron ${\rm Gram}(f)$ of
the {\em Fermat quartic} $\, f = x^4+y^4+z^4$.
The optimal solution for this instance is the
rank-$5$ Gram matrix (\ref{eq:GramMatrix})  
with coordinates
\begin{tiny}
\[\lambda = \left(\frac{-867799528369}{6890409751681},
\frac{-7785115393679}{13780819503362},
\frac{-2624916076477}{6890409751681},
 \frac{1018287438360}{6890409751681},
\frac{2368982554265}{6890409751681},\frac{562671279961}{6890409751681}\right).\]
\end{tiny}
The drop from $1400$ to $38$
for the algebraic degree of optimal Gram matrices of rank $4$ is dramatic.
It would be nice to understand the geometry behind this.
We finally note that the algebraic degrees $63, 38, 1$ in
Table~\ref{table:RankTable} were computed using
{\tt Macaulay2}\footnote{{\tt www.math.uiuc.edu/Macaulay2}}
by elimination
from the KKT equations, as described in \cite[\S 5]{RS}.

\section{The Variety of Cayley Octads} \label{sec:octad2}

The Cayley octads form a subvariety of codimension three in the 
space of eight labeled points in $\P^3$. A geometric study of this
variety was undertaken by Dolgachev and Ortland in \cite[\S IX.3]{DO},
building on classical work of  Coble \cite{Cob}.
This section complements their presentation with several
explicit formulas we found useful for constructing
examples and for performing symbolic computations.
Besides convex algebraic geometry \cite{MR2292953,Hen,RS},
our results have potential applications in {\em number theory}
(e.g.~arithmetic of del Pezzo varieties \cite[\S V]{DO})
and {\em integrable systems} (e.g.~$3$-phase solutions
to the Kadomtsev-Petviashvili equation~\cite{DFS}).
In Theorem \ref{thm:VHdisc} we compute the discriminant of
the quartics (\ref{intro:LMR}) and (\ref{intro:SOS}), and in
Proposition \ref{thm:nets}
we discuss an application to nets of real quadrics in $\P^3$.

We begin with the fact that a Cayley octad
is determined by any seven of its points.
Here is a rational formula for the eighth point in terms 
of the first seven.

\begin{Prop} \label{prop:octadeqns} 
Consider a general configuration $\mathcal{C}$ of seven points in $\P^3$,
with coordinates 
$(1{:}0{:}0{:}0)$, $(0{:}1{:}0{:}0)$, 
$(0{:}0{:}1{:}0)$, $(0{:}0{:}0{:}1)$, $(1{:}1{:}1{:}1)$,
$(\alpha_6{:} \beta_6{:} \gamma_6{:}\delta_6)$ and
$(\alpha_7{:} \beta_7{:} \gamma_7{:}\delta_7)$.
% the columns of the $4 {\times} 7$-matrix
%$$ \begin{pmatrix}
 %                  \,1 &   0 &   0  &  0  &  1 &   \alpha_6 &    \alpha_7  \\
%                   \,0 &   1  &  0  &  0  &  1  &  \beta_6 &   \beta_7     \\
 %                  \,0  &  0  &  1  &  0  &  1  &  \gamma_6 &   \gamma_7  \\
 %                 \, 0  &  0  &  0  &  1  &  1  &  \delta_6 & \delta_ 7  \\
%                   \end{pmatrix}. $$
The unique point $(\alpha_8{:} \beta_8{:} \gamma_8{:}\delta_8)$ in $\P^3$
which completes $\mathcal{C}$ to a Cayley octad is given by the
following rational functions in the eight free parameters:
\begin{align*}
\alpha_8 \,\,&=\,\,\frac{\beta_6 \gamma_7-\beta_6 \delta_7-\gamma_6 \beta_7+\gamma_6 \delta_7+\delta_6 \beta_7-\delta_6 \gamma_7}{\beta_6 \gamma_6 \beta_7 \delta_7-\beta_6 \gamma_6 \gamma_7 \delta_7-\beta_6 \delta_6 \beta_7 \gamma_7+\beta_6 \delta_6 \gamma_7 \delta_7+\gamma_6 \delta_6 \beta_7 \gamma_7-\gamma_6 \delta_6 \beta_7 \delta_7} ,\\
\beta_8 \,\,&=\,\,  \frac{\alpha_6 \gamma_7-\alpha_6 \delta_7-\gamma_6 \alpha_7+\gamma_6 \delta_7+\delta_6 \alpha_7-\delta_6 \gamma_7}{\alpha_6 \gamma_6 \alpha_7 \delta_7-\alpha_6 \gamma_6 \gamma_7 \delta_7-\alpha_6 \delta_6 \alpha_7 \gamma_7+\alpha_6 \delta_6 \gamma_7 \delta_7+\gamma_6 \delta_6 \alpha_7 \gamma_7-\gamma_6 \delta_6 \alpha_7 \delta_7} ,\\
\gamma_8 \,\,&=\,\,  \frac{\alpha_6 \beta_7-\alpha_6 \delta_7-\beta_6 \alpha_7+\beta_6 \delta_7+\delta_6 \alpha_7-\delta_6 \beta_7}{\alpha_6 \beta_6 \alpha_7 \delta_7-\alpha_6 \beta_6 \beta_7 \delta_7-\alpha_6 \delta_6 \alpha_7 \beta_7+\alpha_6 \delta_6 \beta_7 \delta_7+\beta_6 \delta_6 \alpha_7 \beta_7-\beta_6 \delta_6 \alpha_7 \delta_7} ,\\
\delta_8 \,\,&=\,\,\frac{\alpha_6 \beta_7-\alpha_6 \gamma_7-\beta_6 \alpha_7+\beta_6 \gamma_7+\gamma_6 \alpha_7-\gamma_6 \beta_7}{\alpha_6 \beta_6 \alpha_7 \gamma_7-\alpha_6 \beta_6 \beta_7 \gamma_7-\alpha_6 \gamma_6 \alpha_7 \beta_7+\alpha_6 \gamma_6 \beta_7 \gamma_7+\beta_6 \gamma_6 \alpha_7 \beta_7-\beta_6 \gamma_6 \alpha_7 \gamma_7} .
\end{align*}
\end{Prop}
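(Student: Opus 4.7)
I would exploit the fact that a Cayley octad is, by definition, the base locus of a net of quadrics in $\P^3$. Given a generic configuration $\sC$ of seven points normalized as in the statement, the quadrics vanishing on $P_1 = e_0, \ldots, P_4 = e_3$ are precisely those of the form $q = \sum_{0 \le i < j \le 3} a_{ij} x_i x_j$ (no squared terms present). Imposing vanishing at $P_5 = (1{:}1{:}1{:}1)$, $P_6$, $P_7$ then gives three further linear conditions on the six coefficients $a_{ij}$, namely the rows of a $3 \times 6$ matrix whose $k$-th row is
\[ v_k \,=\, (\alpha_k\beta_k,\, \alpha_k\gamma_k,\, \alpha_k\delta_k,\, \beta_k\gamma_k,\, \beta_k\delta_k,\, \gamma_k\delta_k). \]
For generic $\sC$ these three rows are linearly independent, so the solution space is $3$-dimensional, giving a net of quadrics. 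By Bezout's theorem, three quadrics in $\P^3$ have $2^3 = 8$ common zeros, so the net has precisely $8$ base points: the seven given ones plus a unique eighth point $P_8$.

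\textbf{Reformulation in $\P^5$.} Consider the rational map
\[ \phi : \P^3 \ratmap \P^5,\quad (x_0{:}x_1{:}x_2{:}x_3) \mapsto (x_0x_1{:}x_0x_2{:}x_0x_3{:}x_1x_2{:}x_1x_3{:}x_2x_3). \]
Then $v_k = \phi(P_k)$ for $k=5,6,7$, noting $\phi(P_5) = (1{:}1{:}1{:}1{:}1{:}1)$. The condition ``$P_8$ lies on every quadric in the net'' translates directly to the statement
\[ \phi(P_8) \;\in\; W \,:=\, \mathrm{span}(v_5, v_6, v_7) \,\subset\, K^6. \]
The proposition therefore reduces to the assertion that the $6$-tuple $\phi(P_8) = (\alpha_8\beta_8, \alpha_8\gamma_8, \alpha_8\delta_8, \beta_8\gamma_8, \beta_8\delta_8, \gamma_8\delta_8)$, built from the stated coordinate formulas, lies in $W$, and that the resulting $P_8$ is distinct from $P_1, \ldots, P_7$.

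\textbf{Verification and main obstacle.} To verify membership in $W$, one forms the $4 \times 6$ matrix with rows $v_5, v_6, v_7, \phi(P_8)$ and checks that all fifteen of its $4 \times 4$ minors vanish. This is a polynomial identity in the eight parameters $\alpha_6, \beta_6, \gamma_6, \delta_6, \alpha_7, \beta_7, \gamma_7, \delta_7$ that is straightforward (though tedious) to confirm by computer algebra. The $S_4$-action permuting the four homogeneous coordinates of $\P^3$ preserves the normal form fixing $P_1, \ldots, P_5$, so establishing one of the four coordinate formulas (say, for $\alpha_8$) implies the others by symmetry. Well-definedness of $P_8$ as a point of $\P^3$ and its distinctness from $P_1, \ldots, P_7$ follow from the generic non-vanishing of the four denominators appearing in the formulas. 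The main obstacle in the plan is precisely the polynomial identity of this paragraph: the $4 \times 4$ minors expand into degree-$8$ expressions with many terms, and without a conceptual shortcut exploiting the visible symmetry, the identity is essentially a brute-force verification. An alternative derivation, perhaps more illuminating, would be to choose three specific quadrics spanning the net (e.g., by solving the $3 \times 6$ linear system with three convenient parameter choices), compute the common base locus by elimination, and extract $P_8$ after discarding the seven known roots.
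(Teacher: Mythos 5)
Your proposal is correct and follows essentially the same route as the paper's proof, which likewise computes three linearly independent quadrics through the seven points and verifies that the eighth point lies on them by linear algebra over the rational function field $\Q(\alpha_6,\dots,\delta_7)$. Your reformulation via the Veronese-type map $\phi:\P^3\ratmap\P^5$ (reducing the check to vanishing of $4\times 4$ minors) and the use of the $S_4$-symmetry to deduce three of the four coordinate formulas from one are pleasant bookkeeping refinements of the same verification.
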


\begin{proof}
This can be verified using linear algebra
over the rational function field $K = \Q(\alpha_6,\beta_6,\gamma_6,\delta_6,
\alpha_7,\beta_7,\gamma_7,\delta_7)$.
We compute three linearly independent quadrics that vanish at the
seven points and check that they also
vanish at $(\alpha_8{:} \beta_8{:} \gamma_8{:}\delta_8)$.
\end{proof}

The formula in Proposition~\ref{prop:octadeqns} 
parametrizes the semialgebraic set of
real quartics that consist of four ovals. Indeed,
if the parameters $\,\alpha_6,\beta_6,\gamma_6,\delta_6,
\alpha_7,\beta_7,\gamma_7,\delta_7\,$ are real numbers then the
corresponding quartic curve (\ref{intro:LMR}) 
has $28$ real bitangents, so it falls into the first row of
Table \ref{table:sixtypes}, and all quartics with four ovals arise.
Note that this row is the one relevant for applications to
periodic water waves \cite{DFS}.
In practice we usually choose rational numbers for the parameters.
This represents all curves whose $28$ bitangents are rational,
such as the Edge quartic (\ref{LMRforedge}), and it ensures that 
the ground field is $K= \Q$ for all computations in Sections 3-6.

Yet the above formula has two disadvantages.
First of all, it breaks
the symmetry among the eight points in the Cayley octad,
and, secondly, it does not offer an arithmetically useful parametrization 
for the last two rows of Table \ref{table:sixtypes}. 
Indeed, Vinnikov quartics and positive quartics are the lead actors in
this paper, and we found ourselves unable
to manipulate them properly using Proposition~\ref{prop:octadeqns}.
For example, for a long time we failed to find a quartic
with eight rank-$3$ Gram matrices over $\Q$. Then we derived
Proposition~\ref{prop:octadeqns2}, and this led us to
 Example~\ref{ex:BerndQuartic}.

Let $O$ be a configuration of eight points in general position in $\P^3$,  represented
by a $4 {\times} 8$-matrix. If $O^*$ is another
such matrix whose row space equals the kernel
of $O$ then the configuration represented by $O^*$
is said to be {\em Gale dual} or {\em associated} to $O$.
We refer to \cite{DO, EP} for the basics on Gale duality in the
context of algebraic geometry. Both configurations $O$
and $O^*$ are understood as equivalence classes
modulo projective transformations
of $\P^3$ and relabeling of the eight points. We say that  the configuration $O$ is {\em Gale self-dual}
if %there exists a labeling of the points such that 
$O$ and $O^*$ are equivalent in this sense. By a classical result due to Coble \cite{Cob}, $O$ is Gale self-dual 
if and only if $O$ is a Cayley octad; see Theorem~\ref{thm:coble}.

The variety of Cayley octads is defined by the equation $O = O^*$.
We now translate this equation into an algebraic form that is useful for
computations. Let $p_{ijkl}$ denote the $4 {\times} 4$-minor of
the $4 {\times} 8$-matrix $O$ that represents our configuration
of eight points in $\P^3$. Consider the condition that $O$ is mapped to a configuration
 projectively equivalent to its Gale dual $O^*$ if we relabel
 the points by the permutation $(18)(27)(36)(45)$. We express this condition
 using the {\em Pl\"ucker coordinates}~$p_{ijkl}$.
 
\begin{Prop} \label{prop:octadeqns2}
Eight points in $\P^3$ form a Cayley octad
if and only if
\begin{small}$$\begin{matrix}
p_{1234} p_{1256} p_{3578} p_{4678} =p_{5678} p_{3478} p_{1246} p_{1235} , & 
p_{1234} p_{1257} p_{3568} p_{4678} =p_{5678} p_{3468} p_{1247} p_{1235} , \\
p_{1234} p_{1267} p_{3568} p_{4578} =p_{5678} p_{3458} p_{1247} p_{1236}  , & 
p_{1234} p_{1356} p_{2578} p_{4678} =p_{5678} p_{2478} p_{1346} p_{1235} , \\
p_{1234} p_{1457} p_{2568} p_{3678} =p_{5678} p_{2368} p_{1347} p_{1245} , & 
p_{1234} p_{1467} p_{2568} p_{3578} =p_{5678} p_{2358} p_{1347} p_{1246} , \\
p_{1235} p_{1267} p_{3468} p_{4578} =p_{4678} p_{3458} p_{1257} p_{1236} , & 
p_{1235} p_{1347} p_{2468} p_{5678} =p_{4678} p_{2568} p_{1357} p_{1234} , \\
p_{1235} p_{1367} p_{2468} p_{4578} =p_{4678} p_{2458} p_{1357} p_{1236} , & 
p_{1235} p_{1467} p_{2468} p_{3578} =p_{4678} p_{2358} p_{1357} p_{1246} , \\
p_{1236} p_{1347} p_{2458} p_{5678} =p_{4578} p_{2568} p_{1367} p_{1234} , & 
p_{1236} p_{1456} p_{2478} p_{3578} =p_{4578} p_{2378} p_{1356} p_{1246} , \\
p_{1245} p_{1267} p_{3468} p_{3578} =p_{3678} p_{3458} p_{1257} p_{1246} , & 
p_{1245} p_{1346} p_{2378} p_{5678} =p_{3678} p_{2578} p_{1456} p_{1234} , \\
p_{1245} p_{1356} p_{2378} p_{4678} =p_{3678} p_{2478} p_{1456} p_{1235} , & 
p_{1245} p_{1357} p_{2368} p_{4678} =p_{3678} p_{2468} p_{1457} p_{1235} , \\
p_{1245} p_{1367} p_{2368} p_{4578} =p_{3678} p_{2458} p_{1457} p_{1236} , & 
p_{1246} p_{1357} p_{2368} p_{4578} =p_{3578} p_{2468} p_{1457} p_{1236} , \\
p_{1246} p_{1357} p_{2458} p_{3678} =p_{3578} p_{2468} p_{1367} p_{1245} , & 
p_{1247} p_{1357} p_{2368} p_{4568} =p_{3568} p_{2468} p_{1457} p_{1237} , \\
\qquad \qquad \qquad \qquad \qquad
\text{and} &
p_{1346} p_{1357} p_{2458} p_{2678} =p_{2578} p_{2468} p_{1367} p_{1345}.
\end{matrix}
$$\end{small}\end{Prop}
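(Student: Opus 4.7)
By Coble's theorem (Theorem 3.9), a configuration of eight points in linearly general position in $\P^3$ is a Cayley octad if and only if it is self-associated, i.e.\ projectively equivalent to its Gale dual. My plan is to translate self-association into polynomial identities among the Plücker coordinates $p_{ijkl}$, and then to show that the 21 listed relations suffice.

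\textbf{Step 1: the Gale dual in Plücker coordinates.} For a $4\times 8$ matrix $O$ representing the octad, a Gale dual $O^{*}$ satisfies $O\,(O^{*})^T=0$, and a classical identity gives $p_I(O^{*})=\epsilon_I\,p_{I^c}(O)$ for a standard choice of signs $\epsilon_I\in\{\pm1\}$ depending only on $I$ (not on $O$). Projective equivalence between the relabeled $O$ (under $\sigma=(18)(27)(36)(45)$) and $O^{*}$ is then encoded by the existence of a single constant $\mu\in\C^\times$ with
\[
p_{\sigma(I)}(O)\;=\;\mu\,\epsilon_I\,p_{I^c}(O)\qquad\text{for every 4-subset }I\subset[8].
\]
Applied to any ``transversal'' $I$ (one containing exactly one element of each pair $\{1,8\},\{2,7\},\{3,6\},\{4,5\}$), one has $\sigma(I)=I^c$, so this equation pins down $\mu$ up to sign, in particular $\mu^2=1$.

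\textbf{Step 2: elimination of $\mu$.} From $\mu^2=1$ we obtain, for every quadruple $I_1,I_2,I_3,I_4$ of 4-subsets with $\prod_k \epsilon_{I_k}\epsilon_{\sigma(I_k)^c}=1$, the scale-free identity
\[
p_{I_1}p_{I_2}p_{I_3}p_{I_4}\;=\;p_{I_1^c}p_{I_2^c}p_{I_3^c}p_{I_4^c}.
\]
A direct inspection confirms that each of the 21 equations listed has exactly this form: on the right-hand side of every relation, the four 4-subsets are precisely the complements of those on the left-hand side. Thus all 21 relations are \emph{necessary} conditions on a Cayley octad.

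\textbf{Step 3: sufficiency.} For the converse direction, suppose eight points in general position satisfy the 21 equations. I would combine these with the Plücker relations on $\mathrm{Gr}(4,8)$ to recover the full system $p_{\sigma(I)}=\mu\epsilon_I p_{I^c}$ for a single $\mu$, thereby exhibiting $O$ as Gale self-dual. Concretely, the equations express equality of products of four complementary pairs; by choosing the 21 quadruples so that their supports exhaust the non-transversal indices appropriately, one can solve for all pairwise ratios $p_{\sigma(I)}/p_{I^c}$ and verify they are consistent with a single global $\mu$. A clean way to close this is a dimension/irreducibility argument: the classical variety of Cayley octads inside $(\P^3)^8$ is known to be irreducible of codimension~$3$ (see \cite[\S IX.3]{DO}), and our 21 polynomials are visibly nonzero on a general member of it; if one checks that the common zero set of these polynomials has the correct codimension at one explicit Cayley octad (e.g.\ the one from the Edge quartic in Section~3), then equality of varieties follows.

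\textbf{Main obstacle.} The forward direction is mechanical once the sign bookkeeping in Step~2 is done carefully; the real work is sufficiency in Step~3. The hard part is confirming that the particular 21 relations chosen are enough, as opposed to a larger (or smaller) subset, to generate the ideal of the Cayley octad variety up to radical. I would handle this either by a symbolic Gröbner-basis check on a parametric family like the one in Proposition~\ref{prop:octadeqns}, or by invoking the classical results of Coble \cite{Cob} and Dolgachev--Ortland \cite[\S III.3]{DO}, which provide essentially this equational presentation of the self-associated locus.
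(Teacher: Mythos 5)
Your proposal follows the same overall strategy as the paper: invoke Coble's self-association criterion, translate self-association into Plücker-coordinate identities using the Gale-duality formula $p_I(O^*)=\epsilon_I\,p_{I^c}(O)$, note that each of the $21$ equations equates a product of four Plücker coordinates with the product of the four complementary ones, and then verify sufficiency. The paper's actual mechanism for eliminating the scale is cross ratios: for a balanced ratio $\frac{p_{I_1}p_{I_2}}{p_{I_3}p_{I_4}}$ (balanced meaning $I_1+I_2=I_3+I_4$ as multisets of indices) both the $GL_4$ factor and the eight column-rescaling factors cancel, so equality of cross ratios for $O$ and $O^*$ (after relabeling by $\sigma$) gives each equation upon clearing denominators. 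Your Step~1 instead posits a \emph{single} constant $\mu$ with $p_{\sigma(I)}(O)=\mu\,\epsilon_I\,p_{I^c}(O)$. This is not quite right: projective equivalence of labeled point configurations in $\P^3$ allows independent rescaling of each column, so the factor relating $p_{\sigma(I)}$ and $p_{I^c}$ depends on $I$ through a product $\prod_{i\in I}\lambda_i$, not through a single scalar. Your Step~2 ($\mu^2=1$, hence $\mu^4=1$) therefore rests on a false premise. The error happens to wash out in the end because the monomial identities you actually write down are balanced (each index appears twice on each side), but the derivation as presented is not correct; the paper's cross-ratio bookkeeping is the right way to make this rigorous.

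The second issue is in Step~3. Your fallback irreducibility argument --- that the zero set of the $21$ equations has the right codimension at one explicit Cayley octad, hence equals the Cayley octad variety --- does not go through as stated. Having the correct codimension at a single point of the known component does not preclude the zero set from having additional irreducible components elsewhere; to conclude equality of varieties you would still need to show the zero set is irreducible, or explicitly rule out extra components, or (as the paper suggests) work with the quadratic Plücker relations to show the $21$ equations force the full system of cross-ratio identities. So while your outline correctly identifies where the real work lies, neither of the two proposed routes for Step~3 is complete as written. The paper's own sufficiency claim is also terse, but its formulation via cross ratios and Plücker relations is the more promising avenue to make precise.
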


Before discussing the proof of this theorem, we first explain why the shape
of the above equations is plausible. Consider the condition for
six points $(x_i:y_i:z_i)$ in $\P^2$ to be self-dual, in the sense above.
This condition means that the six points lie on a conic, and we write this
algebraically in terms of Pl\"ucker coordinates as
\begin{equation} \label{eq:pascal}
\,\, {\rm det} \!
\begin{smaller}
\begin{pmatrix}
x_1^2 &  y_1^2 & z_1^2 & x_1 y_1 & x_1 z_1 & y_1 z_1 \\
x_2^2 &  y_2^2 & z_2^2 & x_2 y_2 & x_2 z_2 & y_2 z_2 \\
x_3^2 &  y_3^2 & z_3^2 & x_3 y_3 & x_3 z_3 & y_3 z_3 \\
x_4^2 &  y_4^2 & z_4^2 & x_4 y_4 & x_4 z_4 & y_4 z_4 \\
x_5^2 &  y_5^2 & z_5^2 & x_5 y_5 & x_5 z_5 & y_5 z_5 \\
x_6^2 &  y_6^2 & z_6^2 & x_6 y_6 & x_6 z_6 & y_6 z_6 
\end{pmatrix} 
\end{smaller}
\,\, = \,\,\,
p_{123} p_{145} p_{246} p_{356} -
 p_{124} p_{135} p_{236} p_{456} . 
 \end{equation}
This formula appears in \cite[Ex.4, p.37]{DO}
and we adapt the derivation given there.

\begin{proof}[Sketch of proof for Proposition \ref{prop:octadeqns2}]
The cross ratio $\,(p_{1234} p_{1256})/(p_{1235} p_{1246})\,$
is invariant under projective transformations. 
The permutation $ (18)(27)(36)(45)$ of the points transforms that
cross ratio into $\,(p_{8765} p_{8743})/(p_{8764} p_{8753})$.
The condition $O =O^*$ implies that these two cross ratios
are equal. By clearing denominators, the equality of cross ratios
translates into the first of the $21$ equations listed above:
$$ p_{1234} p_{1256} p_{3578} p_{4678} \,\,= \,\, p_{5678} p_{3478} p_{1246} p_{1235} $$
The other $20$ equations are found by the same argument for
cross ratios. By incorporating the quadratic Pl\"ucker relations among the $p_{ijkl}$,
we check that our list of $21$ cross ratio identities is complete, in the sense
that it ensures $\,O = O^*$.
\end{proof}

Dolgachev and Ortland \cite[page 176]{DO} present the conditions under which
a regular Cayley octad $O$ can degenerate. Their analysis exhibits
$64 = 28+35+1$ boundary divisors in the compactified space of
Cayley octads. These are as follows:
\begin{enumerate}
\item Two points of $O$ can come together. This gives $28 = \binom{8}{2}$
boundary divisors, e.g., points $1$ and $2$ come together
if and only if $\,p_{12ij} = 0$ for $3 {\leq} i  {<} j {\leq} 8$.
\item Four points of $O$ can become coplanar. 
The equations in Proposition \ref{prop:octadeqns2} then ensure
that the other four points become coplanar as well.
 So, in total there are $35 = \frac{1}{2} \binom{8}{4}$
boundary divisors such as  $\,\{p_{1234} = p_{5678} = 0 \}$.
\item The eight points of $O$ can lie on a twisted cubic curve,
which is the intersection of the three quadrics. 
The condition for seven points in $\P^3$ to lie on a twisted cubic curve has codimension $2$.
 H.S.~White \cite[eqn.(2)]{Whi} writes this condition by adding an index to (\ref{eq:pascal}). 
 This gives $7 {\cdot} 15 {\cdot} 3$ equations~like
\begin{equation}
\label{eq:liftedpascal}
p_{1237} p_{1457} p_{2467} p_{3567} -  p_{1247} p_{1357} p_{2367} p_{4567}  \quad = \quad 0 .
\end{equation}
Applying the symmetric group $S_8$ to the indices, 
we obtain equations for the codimension $4$
locus of octads that lie on a twisted cubic curve.
This locus is a divisor in the compactified space of Cayley octads, 
as in \cite[\S IX.3]{DO}. Equivalently,
the equations (\ref{eq:liftedpascal}) imply those in 
Proposition \ref{prop:octadeqns2}.
 \end{enumerate}

\smallskip

We now shift gears and examine the three types of boundary divisors from 
the perspective of the desirable representations (\ref{intro:LMR}) and (\ref{intro:SOS})
of a ternary quartic~$f$. In other words, we wish to identify the 
conditions, expressed algebraically in terms of these two representations, for the
quartic curve $\mathcal{V}_\C(f)$ to become singular. 

Recall that the discriminant
$\Delta$ of $f$ is a homogeneous polynomial of degree $27$,
featured explicitly in \cite[Proposition 6.5]{SSS},
in the $15$ coefficients $c_{ijk}$ of (\ref{intro:quartic}).
If we take $f$ in the representation (\ref{intro:LMR}) then each coefficient $c_{ijk}$ 
is replaced by a polynomial of degree $4$ in the $30=10+10+10$ entries of the symmetric
matrices $A,B$ and $C$. The result of performing this substitution
in the discriminant $\Delta(c_{ijk})$ is  denoted $\Delta(A,B,C)$. This is a 
homogeneous polynomial of degree $108$ in $30$ unknowns.
We call $\Delta(A,B,C)$ the {\em Vinnikov discriminant} of a ternary quartic.

Similarly, if we take $f$ in the representation (\ref{intro:SOS}) then each
coefficient $c_{ijk}$ is replaced by a polynomial of degree $2$ in the $18=6+6+6$
coefficients of the quadrics $q_1, q_2$ and $q_3$.
The result of performing this substitution
in the discriminant $\Delta(c_{ijk})$ is  denoted $\Delta(q_1,q_2,q_3)$. This is a 
homogeneous polynomial of degree $54$ in $18$ unknowns.
We call $\Delta(q_1,q_2,q_3)$ the {\em Hilbert discriminant} of a ternary quartic.

\begin{Thm} \label{thm:VHdisc}
The irreducible factorization of the Vinnikov discriminant equals
\begin{equation}
\label{eq:vinnikovdisc}
 \Delta(A,B,C) \,\, = \,\, {\bf M}(A,B,C) \cdot {\bf P}(A,B,C)^2, 
\end{equation}
where ${\bf P}$ has degree $30$ and corresponds to the boundary divisor (2),
while  ${\bf M}$ has degree $48$, and this is the {\em mixed discriminant} corresponding to
both (1) and (3). \\
The irreducible factorization of the Hilbert discriminant equals
\begin{equation}
\label{eq:hilbertdisc}
 \Delta(q_1,q_2,q_3) \,\, = \,\, {\bf Q}(q_1,q_2,q_3) \cdot {\bf R}(q_1,q_2,q_3)^2, 
 \end{equation}
where ${\bf Q}$ has degree $30$ and the degree $12$ factor ${\bf R}$
is the resultant of $q_1, q_2$ and $q_3$.
\end{Thm}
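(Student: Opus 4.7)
My plan is to verify both factorizations in parallel by (i) matching degrees, (ii) identifying the zero loci of the claimed factors and showing each divides the respective discriminant, and (iii) determining multiplicities. Irreducibility of $\mathbf{M}$, $\mathbf{P}$, and $\mathbf{Q}$ will follow from the transitivity of the Galois group $W(E_7)/\{\pm1\}$ on the relevant boundary strata.

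First I verify numerical consistency. The discriminant of a smooth plane quartic is homogeneous of degree $3(4-1)^2=27$ in the $15$ coefficients $c_{ijk}$, cf.~\cite[Proposition 6.5]{SSS}. In the Vinnikov substitution each $c_{ijk}$ is a polynomial of degree $4$ in the entries of $(A,B,C)$, giving $\deg\Delta(A,B,C)=108=48+2\cdot 30$. In the Hilbert substitution each $c_{ijk}$ is quadratic in the entries of $(q_1,q_2,q_3)$, giving $\deg\Delta(q_1,q_2,q_3)=54=30+2\cdot 12$. The classical resultant $\mathbf{R}$ of three ternary quadrics has degree $4$ in each $q_i$, so $\deg\mathbf{R}=12$, matching the claim.

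For the Vinnikov case I identify the zero loci using the boundary stratification recalled just before the theorem. The quartic $\det(xA+yB+zC)$ is singular precisely when the Cayley octad degenerates in one of the three divisorial ways (1), (2), (3) of Dolgachev--Ortland. The ``four points coplanar'' stratum (2) is cut out, up to $W(E_7)/\{\pm 1\}$-symmetry acting on the $35$ Plücker expressions $p_{ijkl}$ (each paired with its complement $p_{i'j'k'l'}$ via the self-duality relations of Proposition~\ref{prop:octadeqns2}), by a single irreducible $GL_4$-invariant polynomial in the entries of $(A,B,C)$; I take this to be $\mathbf{P}$. The remaining two strata (1) and (3) together form a single irreducible $GL_4$-invariant component of the discriminant locus, namely the classical mixed discriminant of the net $\{xA+yB+zC\}\subset\P(\mathrm{Sym}^2\C^4)$ detecting non-transverse intersection with $\{\det Q=0\}$ \emph{away} from the coplanarity locus, which I take to be $\mathbf{M}$. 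For the Hilbert case the identification is immediate: a common zero $p\in\P^2$ of $q_1,q_2,q_3$ forces both $f=\sum q_i^2$ and $\nabla f=2\sum q_i\nabla q_i$ to vanish at $p$, so $\mathbf{R}$ divides $\Delta(q_1,q_2,q_3)$, and the complementary factor of degree $30$ is $\mathbf{Q}$.

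The essential remaining step, and the hardest, is to show that $\mathbf{P}$ and $\mathbf{R}$ each appear with multiplicity \emph{exactly} $2$; once multiplicities are at least $2$ the degree count finishes the argument. For $\mathbf{P}$ I expect to argue that along a generic point of $\{\mathbf{P}=0\}$ the bifid substitution of Corollary~\ref{cor:bifid} indexed by the degenerate $4\!\mid\!4$ partition becomes undefined (the Cremona inversion on the four coplanar octad points is no longer an isomorphism), so two of the $36$ inequivalent determinantal representations of $f$ come together. Equivalently, the representation map $(A,B,C)/GL_4\to\{\text{quartics}\}$ is ramified to order $2$ along $\{\mathbf{P}=0\}$, and the pullback of $\Delta(f)$ picks up a square. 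An analogous mechanism should handle $\mathbf{R}$: expanding $q_i=l_i+\mathrm{h.o.t.}$ at a common zero $p$ shows that $f$ has a node at $p$ with leading form $\sum l_i^2$, and that two of the $63$ Steiner-complex representations of $f$ from Theorem~\ref{thm:sixtythree} coincide along $\{\mathbf{R}=0\}$, ramifying the sum-of-squares representation map to order $2$. To make these ramification indices rigorous I would do a transversal one-parameter family check on an explicit quartic such as the Edge quartic or Example~\ref{ex:BerndQuartic}, and then transport the result to the generic case by the Galois-transitivity noted above.
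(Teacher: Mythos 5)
The paper itself offers no argument: it states that the theorem ``is proved by a computation, the details of which we omit,'' and cites Salmon for the Vinnikov factorization. Your proposal is therefore a genuinely different, conceptual route, and its overall architecture---degree bookkeeping, matching the candidate factors to the three boundary strata of the Cayley octad variety, and then pinning down multiplicities---is the right shape of argument. The numerics are sound: $\deg\Delta=27$, the substitutions have degrees $4$ and $2$, and $\deg\mathbf{R}=12$ is classical. The identification of $\{\mathbf{P}=0\}$ with the coplanarity stratum (2) and the observation that a common zero of $q_1,q_2,q_3$ makes $f$ and $\nabla f$ vanish simultaneously (so $\mathbf{R}\mid\Delta$) are both correct.

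The gap is in the multiplicity-two step, which you rightly flag as the hard part but then leave unresolved. The claim that the $36{:}1$ representation map ramifies to order $2$ along $\{\mathbf{P}=0\}$ because ``two of the $36$ inequivalent determinantal representations come together'' is not established: the fact that the bifid substitution at the degenerate $4|4$ partition stops being an isomorphism does not by itself show that the original representation and its bifid partner become $\mathrm{GL}_4$-equivalent in the limit, and the analogous coalescence of two of the $63$ Steiner complexes along $\{\mathbf{R}=0\}$ is asserted without any mechanism at all. Your fallback of checking a transversal one-parameter family on one explicit quartic and transporting by Galois transitivity is also unsafe: transitivity controls which components appear, not the order of vanishing, and a chosen special quartic could sit on a deeper tangency locus. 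There is in fact a cleaner local computation that bypasses ramification entirely. For $\mathbf{P}$: at a point $p$ where $M_0(p)=x_0A+y_0B+z_0C$ has rank $\le 2$, one has $\mathrm{adj}\,M_0(p)=0$, so for any deformation $M_t$ the derivative $\partial_t\det(M_t)\big|_{t=0}(p)=\mathrm{tr}(\mathrm{adj}\,M_0(p)\cdot\dot M(p))=0$; since the differential of $\Delta$ at a one-nodal quartic is evaluation at the node, $\Delta(\det M_t)=O(t^2)$ for every deformation, giving multiplicity at least $2$, and the degree count forces equality. For $\mathbf{R}$: a deformation $q_i+\epsilon h_i$ changes $f$ to first order by $2\sum q_ih_i$, which vanishes at the common zero $p$ since each $q_i(p)=0$, so the same evaluation-at-the-node argument gives $\Delta=O(\epsilon^2)$. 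Finally, the irreducibility of $\mathbf{M}$ does not follow from $W(E_7)/\{\pm1\}$-transitivity as you suggest, because strata (1) and (3) are distinct Weyl orbits; you need the separate classical fact (used but not cited in your sketch) that the mixed discriminant of a net of quadrics is an irreducible polynomial.
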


This theorem is proved by a computation, the details of which we omit
here.  It has been pointed out to us by Igor Dolgachev and Giorgio
Ottaviani that the factorization (\ref{eq:vinnikovdisc}) was already known to
Salmon \cite{MR0115124}, who refers to ${\bf M}(A,B,C)$ as the {\em tact
invariant}. See also \cite[Section 10]{Gi} for a modern treatment.

We discuss the geometric meaning of the factors in
(\ref{eq:vinnikovdisc}) and (\ref{eq:hilbertdisc}).  The polynomials
${\bf M}, {\bf P}$, ${\bf Q}$, and ${\bf R}$ are absolutely
irreducible: they do not factor over $\C$.  The polynomial ${\bf P}$
represents the condition that the span of $A$, $B$ and $C$ in the
space of $4 {\times} 4$-symmetric matrices contains a rank-$2$
matrix. Note that the variety of such rank-$2$ matrices has
codimension $3$ and degree $10$.  The {\em Chow form} of that variety
is precisely our polynomial ${\bf P}$, which explains why ${\bf P}$
has degree $3 \cdot 10 = 30$.

Non-vanishing of  the mixed discriminant ${\bf M}$ is the condition for the 
intersection of three quadrics in $\P^3$ to  be zero-dimensional and smooth.
A general formula for the degree of such discriminants appears in
\cite[Theorem 3.1]{Nie}. It implies that ${\bf M}$ is tri-homogeneous of
degree $(16,16,16)$ in the entries of $(A,B,C)$, so 
the total degree of $M$ is $ 48$. Note that vanishing of
${\bf M}$ represents not just condition (1)
but it also subsumes condition (3)  that the quadrics
 intersect in a twisted cubic curve.

The resultant ${\bf R}$ of three ternary quadrics $(q_1,q_2,q_3)$ 
is tri-homogeneous of degree $(4,4,4)$ since two
quadrics meet in $4$ points in $\P^2$. Thus ${\bf R}$ has total degree
$12$. The extraneous factor ${\bf Q}$ of degree $30$
expresses the condition that, at some point in $\P^2$,
the vector $ (q_1,q_2,q_3)$ is non-zero and lies in the
kernel of its Jacobian.

\smallskip

We close this paper by reinterpreting Table \ref{table:sixtypes}
as a tool to study linear spaces of symmetric $4 {\times} 4$ matrices.
Two matrices $A$ and $B$ determine a {\em pencil of quadrics} in $\P^3$, and three matrices
$A, B, C$ determine a {\em net of quadrics} in $\P^3$.  We now consider 
these pencils and nets over the field $\R$ of real numbers.
A classical fact, proved by Calabi in \cite{Cal}, states that a pencil of quadrics either 
has a common point or contains a positive definite quadric.
This fact is the foundation for an optimization technique
 known in engineering as the {\em S-procedure}.
The same dichotomy is false for nets of quadrics \cite[\S 4]{Cal},
and for quadrics in $\P^3$ it fails in two interesting ways.

\begin{Thm} \label{thm:nets}
Let $\mathcal{N}$ be a real net of homogeneous quadrics in four unknowns with $\Delta(\mathcal{N})\neq 0$. 
Then  precisely one of the following four cases holds:
\begin{itemize}
\item[(a)] The quadrics in $\mathcal{N}$ have a common point in $\P^3(\R)$.
\item[(b)] The net $\mathcal{N}$ is definite, \textit{i.e.}~it contains a positive definite quadric.
%\item[(c)] The polynomial $\det(\mathcal{N})$ is hyperbolic with respect to a nondefinite $N\in \mathcal{N}$. 
\item[(c)] There is a definite net $\mathcal{N'}$ with $\det(\mathcal{N'})=\det(\mathcal{N})$, but $\mathcal{N}$ is nondefinite.
%\item[(c)] The polynomial $\det(\mathcal{N})$ is hyperbolic with respect to a nondefinite $N\in \mathcal{N}$. Equivalently, there exists a net $\mathcal{N'}$ satisfying (b) with $\det(\mathcal{N'})=\det(\mathcal{N})$, but $\mathcal{N}$ itself contains no definite quadric.
\item[(d)] The net $\mathcal{N}$ contains no singular quadric.
%\item[(d)] The $4 \times 4$-determinant restricted to $\mathcal{N}$ is a sum of squares.
\end{itemize}
\end{Thm}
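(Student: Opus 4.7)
The plan is to show that the four cases (a)--(d) correspond to the topological types of $\mathcal{V}_\R(f)$ listed in Table~\ref{table:sixtypes}, where $f=\det(\mathcal{N})$. Since $\Delta(\mathcal{N})\neq 0$ the quartic $f$ is smooth, so exactly one of the six types occurs. The intended correspondence is: (a) for types 1--4 (Cayley octad has real points), (b)~or~(c) for type 5 (Vinnikov quartic), and (d) for type 6 (empty curve).

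First I would show that (a) is equivalent to the Cayley octad of $\mathcal{N}$ having a real point. A point $p\in\P^3(\R)$ is common to all quadrics in $\mathcal{N}$ if and only if it is a common zero of three generating quadrics, i.e., a real point of the Cayley octad of $\mathcal{N}$. By Table~\ref{table:sixtypes}, this happens precisely in rows 1--4. Under (a) I would then rule out (b), (c), (d). Case (b): if $Q\in\mathcal{N}$ is positive definite and $p$ is a real common point, then $p^T Q p=0$ contradicts $Q\succ 0$. Case (d): in rows 1--4 the real curve $\mathcal{V}_\R(f)$ is non-empty, so $\mathcal{N}$ contains a real singular quadric, namely $x_0 A+y_0 B+z_0 C$ for any real point $(x_0{:}y_0{:}z_0)\in\mathcal{V}_\R(f)$. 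Case (c): by Helton-Vinnikov~\cite{MR2292953}, a smooth real quartic admits a definite symmetric determinantal representation if and only if its real zero locus is a Vinnikov curve; rows 1--4 are not Vinnikov, so no definite $\mathcal{N}'$ with $\det(\mathcal{N}')=f$ exists.

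When (a) fails we are in row 5 or 6. If $\mathcal{V}_\R(f)$ is empty (row 6) then $xA+yB+zC$ is nonsingular for every real nonzero $(x,y,z)$, so (d) holds, and the non-Vinnikov direction of Helton-Vinnikov rules out (b) and (c). If $\mathcal{V}_\R(f)$ is Vinnikov (row 5) then Helton-Vinnikov produces a real net $\mathcal{N}'$ containing a positive definite quadric and satisfying $\det(\mathcal{N}')=f=\det(\mathcal{N})$; accordingly, either $\mathcal{N}$ itself is definite and we are in (b), or it is not and we are in (c). In either subcase the non-empty real curve supplies real singular quadrics in $\mathcal{N}$, ruling out (d).

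The main obstacle is a careful invocation of both directions of the Helton-Vinnikov theorem: existence of a real definite representation for Vinnikov quartics (needed to produce $\mathcal{N}'$ in (b) and (c)) and non-existence for all other smooth real quartics (needed to exclude (c) in rows 1--4 and 6). Once that is in place, the rest of the argument is bookkeeping against Table~\ref{table:sixtypes}, plus the observation that the polynomial identity $\det(\mathcal{N}')=\det(\mathcal{N})$ demanded in (c) (and not merely proportionality) is automatic from applying Helton-Vinnikov to the specific polynomial $f=\det(\mathcal{N})$.
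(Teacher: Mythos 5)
Your proposal is correct and follows essentially the same route as the paper's proof: reduce to the six-type classification of Table~\ref{table:sixtypes} via Theorem~\ref{thm:sixtypes}, identify (a) with rows 1--4 through the real Cayley octad points, (b)/(c) with the Vinnikov row via the Helton--Vinnikov theorem, and (d) with the empty-curve row. Your write-up is more explicit than the paper's (which states the correspondences but does not spell out mutual exclusivity or why the polynomial identity rather than proportionality holds), but the underlying argument and key ingredients are identical.
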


\begin{proof}
  For a real net of quadrics, $\mathcal{N} = \R\{A,B,C\}$, the Vinnikov
  discriminant $\Delta(A,B,C)$ in (\ref{eq:vinnikovdisc}) is
  independent (up to scaling) of the basis $\{A,B,C\}$, and thus can be denoted
  $\Delta(\mathcal{N})$. If $\Delta(\mathcal{N})$ is non-zero, 
  the polynomial $\det(\mathcal{N})=\det(xA+yB+zC)$ defines a smooth curve, which depends on the choice of basis $\{A,B,C\}$ only 
  up to projective change of coordinates in $[x:y:z]$.
  This real quartic falls into precisely one of the six
  classes in Table \ref{table:sixtypes}.  The first four classes
  correspond to our case (a).  The fifth class corresponds to our cases
  (b) and (c) by the Helton-Vinnikov Theorem \cite{MR2292953}. 
 As a Vinnikov quartic has definite and non-definite real determinantal representations, both (b) and (c) do occur \cite{MR1225986}. 
 For an example, see \cite[Ex. 5.2]{us}.
The last class corresponds to our case (d).
\end{proof}

Given a net of quadrics $\mathcal{N} = \R\{A,B,C\}$, one may wish to
know whether there is a common intersection point in real projective
$3$-space $\P^3(\R)$, and, if not, one seeks the certificates promised
in parts (b)--(d) of Theorem \ref{thm:nets}. Our algorithms in
Sections 3, 4 and 5 furnish a practical method for identifying cases
(b) and (d). The difference between (b) and (c) is more subtle and is
discussed in detail in \cite{us}.

\medskip

{\bf Acknowledgments.} We are grateful to
Morgan Brown and Mauricio Velasco for
their help at the start of this project, and we thank
Dustin Cartwright, Melody Chan,
Igor Dolgachev,
 Giorgio Ottaviani  and Bruce Reznick
 for their comments on the manuscript.
Daniel Plaumann was supported by the Alexander-von-Humboldt 
Foundation through a Feodor Lynen postdoctoral fellowship,
hosted at UC Berkeley.
Bernd Sturmfels and Cynthia Vinzant acknowledge  financial support by
 the U.S.~National Science Foundation
(DMS-0757207 and DMS-0968882).

\nocite{049.1315cj}
{\small\linespread{1}

}
\end{document}